\newcommand{\RedefinitSymbole}[1]{  \expandafter\let\csname old\string#1\endcsname=#1 \let#1=\relax
\newcommand{#1}{\csname old\string#1\endcsname\,}}
\def\cf{\textit{cf.}\kern.3em}
\def\resp{\textit{resp.}\kern.3em}
\renewcommand{\k}{\kern2pt}
\numberwithin{equation}{section} \makeatletter
\let\emptyset\varnothing
\DeclareMathOperator{\Aut}{Aut}
\DeclareMathOperator{\spec}{Spec}
\newtheorem{proposition}[equation]{Proposition}
\newtheorem{fact}[equation]{Fact}
\newtheorem{theorem}[equation]{Theorem}
\newtheorem{corollary}[equation]{Corollary}
\newtheorem{lemma}[equation]{Lemma}
\theoremstyle{definition}
\newtheorem{definition}[equation]{Definition}
\newtheorem{remark}[equation]{\textbf{Remark}}
\newcommand{\virg}[1]{\textquotedblleft#1\textquotedblright}
\newcommand{\Pp}[1]{\mathbb P^{#1}}
\newcommand{\C}{\mathbb C}
\newcommand{\Q}{\mathbb Q}
\newcommand{\M}[1]{\mathcal{M}_{#1}}
\newcommand{\Mm}[2]{\mathcal{M}_{#1,#2}}
\newcommand{\Mb}[1]{\overline{\mathcal{M}}_{#1}}
\newcommand{\Mmb}[2]{\overline{\mathcal{M}}_{#1,#2}}
\newcommand{\coh}[2]{H^{#1}({#2})}
\newcommand{\cohc}[2]{H_c^{#1}({#2})}
\newcommand{\pu}{\bullet}
\newcommand{\s}{\mathfrak S}
\newcommand{\Schur}[1]{\mathbb S_{#1}}
\newcommand{\Sym}{Sym}
\newcommand{\F}[2]{F({#2},{#1})}
\newcommand{\hsq}{\mathsf{HS}_\Q}
\newcommand{\chihdg}[1]{\chi^c_{\operatorname{Hdg}}(#1)}
\newcommand{\Lt}[2]{L^{#1}t^{#2}}
\begin{document}

\date{January 8, 2013}
\title{\textbf{The orbifold cohomology of moduli of genus~$3$ curves}}
\author{Nicola Pagani and Orsola Tommasi}
\address{
Nicola Pagani,
Institut f\"ur Algebraische Geometrie, Leibniz Universit\"at Hannover, Wel\-fen\-gar\-ten~1, D-30167 Hannover, Germany,
\texttt{npagani@math.uni-hannover.de}}
\address{
Orsola Tommasi,
Institut f\"ur Algebraische Geometrie, Leibniz Universit\"at Hannover, Wel\-fen\-gar\-ten~1, D-30167 Hannover, Germany,
\texttt{tommasi@math.uni-hannover.de}}
\subjclass{2010 MSC. Primary: 14H10, 55N32. Secondary: 14N35, 14D23, 14H37, 32G15, 55P50.}

\begin{abstract}In this work we study the additive orbifold cohomology of the moduli stack of smooth genus $g$ curves. We show that this problem reduces to investigating the rational cohomology of moduli spaces of cyclic covers of curves where the genus of the covering curve is $g$. Then we work out the case of genus $g=3$. Furthermore, we determine the part of the orbifold cohomology of the Deligne--Mumford compactification of the moduli space of genus $3$ curves that comes from the Zariski closure of the inertia stack of $\M3$. 
\end{abstract}
\maketitle
\setcounter{tocdepth}{2}
\tableofcontents

\section{Introduction}

It was a remarkable discovery of the beginning of this century, anticipated in physics in the nineties, that the degree zero small quantum cohomology of a smooth Deligne--Mumford stack $X$ does not in general coincide with the ordinary cup product, and that it is, in fact, a proper ring extension of the ordinary cohomology ring of $X$. More generally, the definition of quantum cohomology and Gromov--Witten invariants for orbifolds were given in symplectic geometry by Chen and Ruan in \cite{chenruan}. The algebraic counterparts of these theories were developed by Abramovich--Graber--Vistoli in \cite{agv1}, \cite{agv2}.

The Chen--Ruan cohomology of a smooth Deligne--Mumford stack $X$ is, by definition, the degree zero part of the small quantum cohomology ring of $X$, and the orbifold cohomology of $X$ is the rationally graded vector space that underlies the Chen--Ruan cohomology algebra. The general idea, coming from stringy geometry, is that an important role in the study of $X$ is played by the so-called \emph{inertia stack} of $X$. When $X$ is a moduli space for certain geometric objects, the inertia stack of $X$ parametrizes the same geometric objects, together with the choice of an automorphism on them. 
The stack $X$ itself appears as the connected component of its inertia stack associated with the trivial automorphism, but in general there are other connected components, usually called the \emph{twisted sectors} of $X$, a terminology that originates from physics. Orbifold cohomology is simply the ordinary cohomology of the inertia stack, endowed with a different grading. Each twisted sector is assigned a rational number, called (depending on the author) \emph{degree shifting number}, \emph{age} or \emph{fermionic shift}: this number depends on the action of the given automorphism on the normal bundle to the twisted sector in $X$. Then the degree of each cohomology class of the twisted sector is shifted by twice this rational number.

In this paper, we study the inertia stack of moduli spaces $\M g$ of smooth genus $g$ curves. The starting point of our construction is that one can associate with each object $(C,\alpha)$ of the inertia stack the cover given  by quotienting $C$ by the cyclic group generated by $\alpha$. 
Following an idea of Fantechi~\cite{fantechi}, we exploit this correspondence to tackle the problem of the identification of the twisted sectors of $\M g$ by using the classical theory of cyclic (possibly ramified) covers of algebraic varieties, as developed in \cite{pardini}. We identify some discrete data in order to separate the inertia stack of $\mathcal{M}_g$ in its connected components. 
The first data are the genus of the quotient curve and the order $N$ of the automorphism; the latter is a general invariant of twisted sectors as it appears already in the definition of the inertia stack. 
Finally, the branch locus of the cover can be split in $N-1$ parts according to the local monodromy around each of its points. The last invariants are simply the degrees of each of these $N-1$ divisors. It is a recent result of Catanese \cite{catanese}
that these numerical data single out a connected component of the moduli space of connected cyclic covers. 

The twisted sectors of moduli of curves were studied with \emph{ad hoc} methods in the case of genus $2$ in \cite{spencer2}, and in the case of pointed curves of genus $1$ in \cite{pagani1}. The same approach explained in the above paragraph was used in \cite{pagani2} to identify the twisted sectors of $\mathcal{M}_{g,n}$ with $g=2$ or $n \geq 1$, in this paper we complete the picture by analyzing the more delicate case when $n=0$. A complete cohomological description of the twisted sectors of moduli of hyperelliptic curves for all genera is given in \cite{paganihyper} following a similar approach.


After having determined the connected components of the inertia stack of $\M g$ for general $g$, we study the topology of the twisted sectors in the case when $g$ equals $3$. In most cases, the cohomology of the twisted sector is computed in a rather straightforward way. The main exceptions are the twisted sectors corresponding to bielliptic and to quadrielliptic genus $3$ curves, which require a more detailed analysis. In particular, our computation of the cohomology of the moduli space of bielliptic genus $3$ curves is achieved by using a combination of Vassiliev--Gorinov's method for the computation of the cohomology of complements of discriminants with the study of certain Leray spectral sequences, following the approach of \cite{OTM4}, \cite{OTM32}. We expect that these techniques could be applied also in other cases of moduli spaces of cyclic covers, at least for small values of $g$.

  Finally, we partially extend our investigation to the orbifold cohomology of the Deligne--Mumford compactification $\overline{\mathcal{M}}_3$ of $\M3$. Specifically, we study the Zariski closure of $I(\M3)$ inside the inertia stack $I(\overline{\mathcal{M}}_3)$. 
The connected components of this compactification are precisely the connected components of $I(\Mb3)$ whose general element is a smooth curve\footnote{We can think of this situation in analogy with what happens in the theory of moduli of stable maps: it is often the case that compactifying the space of maps one introduces \virg{extraneous} components, and that the main interest is focused on the connected components of the moduli space whose general element is a map from a smooth curve.}. The study of the compactification of the inertia stack is performed using moduli stacks of admissible covers (we refer for the general theory to \cite{acv}). The cohomology of the twisted sectors contained in this compactification contributes to what we call the compactified orbifold cohomology of $\M3$.

The main results of the paper are the description of the connected components of the twisted sectors of $\mathcal{M}_g$ (Sect. \ref{sectiondefinertia}), the age for each of them (Prop.~\ref{fantechiage}) and the explicit computation of the orbifold cohomology of $\mathcal{M}_3$, which we recollect in the following two theorems: 

\begin{theorem}[Theorem~\ref{pp1m3}] The orbifold Poincar\'e polynomial of $\mathcal{M}_3$ is:
\begin{align*}
1&+t+2t^2+t^3+t^{\frac{10}{3}}+t^{\frac{7}{2}}+4t^4+2 t^{\frac{9}{2}}+ 2 t^{\frac{14}{3}}+ t^{\frac{33}{7}}+ 5 t^5+ t^{\frac{46}{9}}+ t^{\frac{36}{7}}
\\&+ 3t^{\frac{16}{3}}+ t^{\frac{38}{7}}+ 4 t^{\frac{11}{2}}
+ t^{\frac{50}{9}}
+ t^{\frac{39}{7}}
+t^{\frac{17}{3}}+ t^{\frac{40}{7}}+ t^{\frac{52}{9}}
+ t^{\frac{41}{7}}+10 t^6 +t^{\frac{43}{7}}\\
&+ t^{\frac{56}{9}}+ t^{\frac{44}{7}}+ t^{\frac{19}{3}}+ t^{\frac{45}{7}}+ t^{\frac{58}{9}}+3 t^{\frac{13}{2}}+ t^{\frac{46}{7}}+ 2 t^{\frac{20}{3}}+ t^{\frac{48}{7}}+ t^{\frac{62}{9}}+ t^{\frac{51}{7}}.
\end{align*}
\end{theorem}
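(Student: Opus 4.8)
The plan is to evaluate the orbifold Poincar\'e polynomial directly from its definition. It equals
\[
\sum_{T} t^{2\operatorname{age}(T)}\, P(T;t), \qquad P(T;t):=\sum_{i\ge 0}\bigl(\dim_{\Q}\coh{i}{T}\bigr)\, t^{i},
\]
where $T$ ranges over the connected components of the inertia stack $I(\M3)$ and where the untwisted component $T=\M3$ is assigned age $0$. By Looijenga's computation of $\coh{\bullet}{\M3}$ the untwisted sector contributes $P(\M3;t)=1+t^{2}$, so the whole task reduces to listing the twisted sectors, computing the age of each by Proposition~\ref{fantechiage}, and determining the rational Poincar\'e polynomial of each. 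The classification of Section~\ref{sectiondefinertia} describes the connected components of the twisted sectors of $\M g$ in terms of the genus $g'$ of the quotient curve, the order $N$ of the automorphism, and the degrees of the $N-1$ divisors into which the branch locus decomposes according to local monodromy; for $g=3$ one has $g'\in\{0,1,2\}$ and the admissible triples form an explicit finite list, so that once the age formula is applied only the computation of the $P(T;t)$ remains.

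For the large majority of these sectors the computation is routine. Each $T$ is, up to a finite group quotient, a moduli space of cyclic covers of a curve of genus $g'\le 2$: when $g'=0$ the base is $\Pp 1$ and $T$ is a point, a finite quotient of an open part of some $\Mm0n$, or a configuration-type space; when $g'=1$ it fibres over $\Mm11$ with fibre a configuration space of points on an elliptic curve; when $g'=2$ the cover is \'etale and $T$ is related to the moduli of genus~$2$ curves with level structure; and the hyperelliptic twisted sectors are treated as in \cite{paganihyper}. In all of these cases the rational cohomology is accessible from known results on the cohomology of $\Mm0n$, of configuration spaces, and of moduli of pointed elliptic and of genus~$2$ curves, typically via the Leray spectral sequence of the projection to the base; multiplying by $t^{2\operatorname{age}(T)}$ then yields the corresponding terms of the target polynomial.

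The genuinely hard part is the computation of $P(T;t)$ for the twisted sectors whose generic object is a bielliptic genus~$3$ curve (a double cover of an elliptic curve branched over four points) or a quadrielliptic genus~$3$ curve (a $\Z/4$-cover of an elliptic curve). For these, $T$ is a finite quotient of the complement of a discriminant hypersurface inside a family of divisors fibred over $\Mm11$, and its cohomology is not directly available. I would compute these two Poincar\'e polynomials by combining Vassiliev--Gorinov's method for the cohomology of complements of discriminants with a careful analysis of the Leray spectral sequences of the projection to $\Mm11$, following the strategy of \cite{OTM4} and \cite{OTM32}; this is the main obstacle of the proof. Once the cohomology of the bielliptic and quadrielliptic sectors is known, the remaining step is the lengthy but mechanical substitution of all the Poincar\'e polynomials and ages into the displayed sum, followed by the collection of terms, which produces the asserted polynomial. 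The computation can be checked for consistency against the requirement that the age-$0$ contribution reproduces $1+t^{2}$ and that the integer exponents of $t$ that occur are bounded by the virtual cohomological dimension of $\M3$.
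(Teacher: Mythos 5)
Your overall strategy matches the paper's proof: classify the twisted sectors of $I(\M3)$ via Section~\ref{sectiondefinertia}, compute each age from Proposition~\ref{fantechiage}, compute the rational Poincar\'e polynomial of each sector (via Getzler's $\s_d$-equivariant description of $\coh\pu{\Mm0d}$ for the $43$ sectors with $g'=0$, and by separate arguments for the four sectors $\mathcal A$, $\mathcal B$, $\mathcal C'$, $\mathcal D'$ with $g'\geq 1$), and sum with the age shift. You also correctly single out the bielliptic and quadrielliptic sectors as the hard part; note, however, that the paper applies Vassiliev--Gorinov's method only to the bielliptic sector $\mathcal A$, while $\mathcal C'$ is handled by an explicit geometric description of the fibres of the forgetful map $\tilde{\mathcal C}\rightarrow\Mm04$ and its $\s_3$-equivariant Leray spectral sequence, without a discriminant computation.

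There is, however, a concrete error in your setup that would make the final answer wrong. You assert that the untwisted sector contributes $P(\M3;t)=1+t^{2}$. Looijenga's computation \cite[(4.7)]{loo} gives that $\coh\pu{\M3}$ is one-dimensional in degrees $0$, $2$, and $6$ and vanishes otherwise, so $P(\M3;t)=1+t^{2}+t^{6}$. The missing $t^{6}$ is not harmless: collecting all twisted-sector contributions to the coefficient of $t^{6}$ --- from $\mathcal B$ and $\mathcal C'$ (each of age $2$, contributing through $H^{2}$), from $\mathcal M_{(0,4;0,0,4)}$ (age $3$, through $H^{0}$), from the four zero-dimensional sectors of age $3$ in Table~\ref{base0dim0}, and from $\mathcal M_{(0,4;2,0,2)}$ and $\mathcal M_{(0,6;1,0,2,0,1)}$ (each of age $5/2$, through $H^{1}$) --- yields only $9$ of the $10$ units of $t^{6}$ in the theorem, and the tenth comes precisely from $H^{6}(\M3)$. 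Your own proposed consistency check, that the age-zero piece of the result should recover the untwisted Poincar\'e polynomial, would have flagged this if carried out honestly, since the age-zero piece must be $1+t^2+t^6$, not $1+t^2$.
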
  
\begin{theorem}[Theorem~\ref{pp2m3}] The compactified orbifold Poincar\'e polynomial (see Definition~\ref{pp2}) of ${\mathcal{M}}_3$ is:
\begin{align*}
1& + t + 4 t^2 + 4 t^3 + t^{\frac{10}{3}} + t^{\frac{7}{2}} + 16 t^4 + 
      t^{\frac{9}{2}} + 2 t^{\frac{14}{3}} + t^{\frac{33}{7}} + 12 t^5 \\&+ 
      t^{\frac{46}{9}} + t^{\frac{36}{7}} + 5 t^{\frac{16}{3}} 
+ t^{\frac{38}{7}} + 
      5 t^{\frac{11}{2}} + t^{\frac{50}{9}} + t^{\frac{39}{7}}  +t^{\frac{40}{7}} + 
      t^{\frac{52}{9}} + t^{\frac{41}{7}}  \\&+ 31 t^6+ t^{\frac{43}{7}} + t^{\frac{56}{9}}+      t^{\frac{44}{7}} + t^{\frac{45}{7}} + t^{\frac{58}{9}} + 5 t^{\frac{13}{2}} 
+       t^{\frac{46}{7}} + 5t^{\frac{20}{3}} + t^{\frac{48}{7}} + t^{\frac{62}{9}} \\&+ 
      12 t^7 + t^{\frac{51}{7}} +2 t^{\frac{22}{3}} + t^{\frac{15}{2}} + 
      16 t^8 + t^{\frac{17}{2}} + t^{\frac{26}{3}} + 4t^9 + 4 t^{10} + 
      t^{11} + t^{12}.
\end{align*}

\end{theorem}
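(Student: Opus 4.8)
The plan is to prove Theorem~\ref{pp2m3} by a component‑by‑component analysis that parallels the proof of Theorem~\ref{pp1m3}, with the open twisted sectors replaced by their closures. Unwinding Definition~\ref{pp2}, what has to be computed is the sum
\[
P_{\Mb{3}}(t) + \sum_T t^{2a(T)}\, P_{\overline{T}}(t),
\]
where $T$ runs over the twisted sectors of $\M3$ described in Section~\ref{sectiondefinertia}, $a(T)$ is the age of $T$ provided by Proposition~\ref{fantechiage}, the closure $\overline{T}\subset I(\Mb3)$ is taken inside the inertia stack of $\Mb3$, and $P_{\overline{T}}(t)=\sum_i\dim_\Q H^i(\overline{T};\Q)\,t^i$. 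Here $\overline{T}$ inherits the age $a(T)$ since this number depends only on the discrete data of the component, and the untwisted sector contributes $P_{\Mb{3}}(t)$, whose Poincar\'e polynomial is classical. The ages $a(T)$ are already in hand, so the real content is the computation of $H^\bullet(\overline{T};\Q)$ for every $T$, followed by the bookkeeping that produces the stated polynomial.

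The first step is to model each $\overline{T}$ concretely. Using the classification of Section~\ref{sectiondefinertia}, a component is pinned down by the order $N$ of the automorphism, the genus of the quotient curve, and the degrees of the $N-1$ branch divisors; by the theory of admissible covers (\cite{acv}), the corresponding $\overline{T}$ is identified with a finite quotient of a suitable moduli stack of admissible $\Z/N$‑covers, whose coarse space is a projective variety. The components then split into two classes. For the sectors supported on isolated curves or on low‑dimensional families with large automorphism group --- the Klein quartic, the Fermat quartic, and the cyclic covers of $\Pp1$ with a totally ramified point, which are responsible for the fractional exponents with denominator $7$ and $9$ in the statement --- the space $\overline{T}$ is a point or a finite quotient of some $\Mmb0n$, so $P_{\overline{T}}(t)$ is read off directly or from the known $\mathfrak S_n$‑equivariant cohomology of $\Mmb0n$. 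The hyperelliptic sector and the sectors that factor through it reduce in the same way to the cohomology of $\Mmb08$ together with its $\mathfrak S_8$‑action; the remaining low‑dimensional sectors (for instance the \'etale double covers of genus $2$ curves and the trielliptic sectors) reduce to the known cohomology of moduli of curves of genus $\le 2$ with few marked points via the corresponding covering maps. This handles all sectors except the two genuinely delicate ones.

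The delicate cases are the bielliptic sector ($N=2$, elliptic quotient) and the quadrielliptic sector ($N=4$, elliptic quotient), for which $\overline{T}$ is the admissible‑cover compactification of a moduli space of cyclic covers of elliptic curves. Here I would first recover the cohomology of the open part --- already the technical core of Theorem~\ref{pp1m3} --- by combining Vassiliev--Gorinov's method for the cohomology of complements of discriminants with an analysis of the Leray spectral sequence of the fibration recording the target elliptic curve, in the spirit of \cite{OTM4}, \cite{OTM32}. I would then stratify $\overline{T}$ by the topological type of the admissible cover, compute the cohomology of each boundary stratum (these are built from moduli of admissible covers of strictly smaller complexity, to which the same methods apply inductively), and assemble $H^\bullet(\overline{T};\Q)$ from these pieces via the Gysin and excision long exact sequences linking $\overline{T}$, its open stratum, and the boundary. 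Throughout, one must track mixed Hodge structures: both to ensure that the relevant spectral sequences degenerate, and to check that all cohomology in the degrees that occur is of Tate type --- in particular that no contributions from modular forms survive --- so that the Betti numbers are precisely what $P_{\overline{T}}(t)$ records.

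The main obstacle is exactly this last point: controlling the cohomology of the compactified bielliptic and quadrielliptic sectors, boundary included. The open computation is already demanding, and passing to the compactification forces one to understand the combinatorics of degenerations of $\Z/N$‑covers of curves and the weight filtrations on the resulting mixed Hodge structures. Once every $P_{\overline{T}}(t)$ has been determined, the theorem follows by substituting into the displayed sum and collecting terms; this final step is mechanical, and it simultaneously provides a consistency check against Theorem~\ref{pp1m3}, which is recovered from the same data by discarding the boundary contributions.
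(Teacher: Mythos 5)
Your overall structure is correct and matches the paper's: you expand Definition~\ref{pp2} as $P_{\Mb{3}}(t)+\sum_T t^{2a(T)}P_{\overline{T}}(t)$, get $P_{\Mb3}$ from Getzler, get the ages from Proposition~\ref{fantechiage}, and reduce the genus-$0$-base sectors to $\s_n$-equivariant cohomology of $\Mmb{0,n}$ via the root-stack/admissible-cover machinery (this is exactly Corollary~\ref{corollariocadman} and Tables~\ref{base0dim>0}, \ref{base0dim0}). Where you diverge is in the treatment of the four closures $\overline{\mathcal A},\overline{\mathcal B},\overline{\mathcal C}',\overline{\mathcal D}'$: you propose to stratify by topological type of the admissible cover and to assemble the full cohomology of each $\overline{T}$ inductively via Gysin and excision sequences, tracking mixed Hodge structures and the Leray spectral sequences throughout. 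The paper takes a lighter route. It computes only the \emph{Hodge--Grothendieck character for compact support} $\chihdg{\overline{T}}$, which is additive over the strata, and it exploits the finite map $\overline T\to[\Mmb{1,4}/\s_4]$ (resp.\ $\Mmb{1,2}$, $[\Mmb{1,2}/\s_2]$) to enumerate the boundary components lying over each stratum of the target. Crucially, since each $\overline T$ is a smooth proper Deligne--Mumford stack, its rational cohomology is pure and is determined by $\chihdg{\overline T}$, and Poincar\'e duality then reduces the work to the strata of codimension at most $\dim\overline T/2$ --- in practice only codimension $1$ and $2$ for $\overline{\mathcal A}$, and codimension $1$ for the others. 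In particular there is no need to re-run Vassiliev--Gorinov on the closure, nor to control the weight filtration on boundary contributions, nor to worry about extension data in long exact sequences: additivity in the Grothendieck group of Hodge structures plus purity of the total space does all of that automatically. Your approach would presumably terminate in the same answer, but it is substantially more laborious than necessary, and it would be worth rephrasing your computation in terms of $\chihdg{}$ and invoking purity plus Poincar\'e duality explicitly, so that only low-codimension boundary strata need analysis.
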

\noindent The ordinary additive cohomology of $\mathcal{M}_3$ and $\overline{\mathcal{M}}_3$ was studied by Looijenga and Getzler, respectively in \cite[(4.7)]{loo} and \cite[Prop.~16]{getzlertopolo}: our results are an extension of theirs.
 
 The paper is developed on three levels of generality. On the first level, we work with a general smooth Deligne--Mumford stack: we recall the theory of inertia stack, age grading, orbifold cohomology, as first developed by Chen--Ruan in \cite{chenruan} and Abramovich--Graber--Vistoli in \cite{agv1}, \cite{agv2}. The compactified orbifold cohomology is introduced for any choice of a smooth compactification $X \subset \overline{X}$. 
On the second level, we develop the above theories in the case of the moduli stack of curves $\mathcal{M}_g$ and its Deligne--Mumford compactification $\overline{\mathcal{M}}_g$. Finally the third level is devoted to work out all the details in the case $g=3$.

We work with cohomology, as in the seminal paper of Chen--Ruan. Nevertheless, many of the techniques we use are algebraic, thus closer to the Abramovich--Graber--Vistoli's approach. We work over the field of complex numbers, and cohomology is always taken with rational coefficients.

\label{intro}

\subsection{Notation}\label{notation}

 By a stack, we shall always mean a Deligne--Mumford stack, of finite type over $\mathbb{C}$. In this context, the canonical map from a stack to its coarse moduli space induces an isomorphism in cohomology: we will often identify the two cohomologies by means of this isomorphism. We adopt the convention that orbifold cohomology is the graded vector space underlying Chen--Ruan cohomology, where the latter carries the additional ring structure.
 
 If $X$ is an algebraic variety, or, more generally, a Deligne--Mumford stack, and $G$ is a finite group acting on it, we denote the quotient Deligne--Mumford stack by $[X/G]$. When $X$ is a point, we write $B G$ for $[\spec{\mathbb{C}}/G]$: the moduli stack that classifies principal $G$-bundles.

In our work, we shall consider the cohomology with its mixed Hodge structures. We shall denote by $\Q(-k)$ the Hodge structure of Tate of weight $2 k$. The class of $\Q(-1)$ in the Grothendieck group $K_0(\hsq)$ of rational Hodge structures will be denoted by $L=[\Q(-1)]$.
 
Results on the cohomology with compact support of a quasi-projective variety (or stack with quasi-projective coarse moduli space) $X$ shall often be expressed by means of its Euler characteristic in $K_0(\hsq)$. Following \cite[Definition~5.5.2]{PS}, we call this Euler characteristic the \emph{Hodge--Grothendieck character for compact support} of $X$ and denote it by
$$
\chihdg X = \sum_{i\in\mathbb N}(-1)^i[\cohc iX]\in K_0(\hsq).
$$
Hodge--Grothendieck characters for compact support are sometimes called Serre characteristics in the literature.

Similarly, to state results on cohomology with compact support in a concise way, we shall express them as polynomials with coefficients in the Grothendieck group of Hodge structures:
$$
P_X(t) = \sum_{i\in\mathbb N}[\cohc iX]t^i\in K_0(\hsq)[t].
$$ 

We work with cyclic covers $f:\;C \to C'$, where $C$ and $C'$ are, respectively, the covering and the covered space. If a cyclic cover is not \'etale, the source and the target contain, respectively, ramification and branch points.

We shall denote the symmetric group in $d$ letters by $\s_d$ and the group of $k$th roots of unity by $\mu_k$.

\ \\
\section{The inertia stacks}

\subsection{Definition of the inertia stack}
\label{sectiondefinertia}
In this section we recollect some basic notions concerning the inertia stack. For a more detailed study, we address the reader to \cite[Section 3]{agv2}. 

We introduce the following natural stack associated to a stack $X$, which points to where $X$ fails to be an algebraic space.

\begin{definition} \label{definertia} (\cite[4.4]{agv1}, \cite[Definition 3.1.1]{agv2}) Let $X$ be a stack. The \emph{inertia stack} $I(X)$ of $X$ is defined as:
$$
I(X) := \coprod_{
\begin{smallmatrix}
N \in \mathbb{N}_{>0}\\
\end{smallmatrix}
} I_N(X)
$$
where $I_N(X)(S)$ is the following groupoid:
\begin{enumerate}
\item The objects are pairs $(\xi, \alpha)$, where $\xi$ is an object of $X$ over $S$, and $\alpha: \mu_N \to \Aut(\xi)$ is an injective homomorphism;

\item The morphisms are the morphisms $g: \xi \to \xi'$ of the groupoid $X(S)$, satisfying $g \cdot \alpha(1)= \alpha'(1) \cdot g$.
\end{enumerate}
The inertia stack comes with a natural forgetful map $f:I(X) \to X$.

We also define $I_{TW}(X):= \coprod_{N>1}I_N(X)$.
The connected components of $I_{TW}(X)$ are called \emph{twisted sectors} of the inertia stack of $X$, or simply twisted sectors of $X$. 
\end {definition}

We remark that, by its very definition, $I_N(X)$ is an open and closed substack of $I(X)$, but it rarely happens  that it is connected. One special case is when $N$ equals $1$: in this case the map $f$ restricted to $I_1(X)$ induces an isomorphism of the latter with $X$. The connected component $I_1(X)$ will be referred to as the \emph{untwisted sector}.
 We also observe that after the choice of a generator of $\mu_N$, we obtain an isomorphism of $I(X)$ with $I'(X)$, where the latter is defined as the ($2$-)fiber product $X \times_{X \times X} X$ where both morphisms $X \rightarrow X \times X$ are the diagonals.

\begin{remark} \label{mappaiota} There is an involution $\iota: I_N(X) \to I_N(X)$, which is induced by the map $\iota': \mu_N \to \mu_N$ given by  $\iota'(\zeta):= \zeta^{-1}$.
\end{remark}
\noindent The inertia stack, which we have just defined, is the fundamental ingredient in the definition of orbifold cohomology (Chen--Ruan cohomology as a vector space). We observe that, at this level, we do not need $X$ to be smooth nor proper.
\begin{definition} (\cite{chenruan}) \label{defcoomorb1} Let $X$ be a stack. The \emph{orbifold cohomology} (with rational coefficients) of $X$ is defined as a vector space as:
$$
H^\pu_{CR}(X):= \coh\pu{I(X)}.$$
\end{definition}

Now if $X \hookrightarrow \overline{X}$ is an open dense embedding, we can define an intermediate space between $I(X)$ and $I(\overline{X})$, namely:

\begin{definition} \label{ibar} Given a compactification of a stack $i:\;X \to \overline{X}$, we define the \emph{compactified inertia stack} of $X$ as the stack 
$$\overline{I}(X)= \coprod_{\begin{smallmatrix}N \in \mathbb{N}\end{smallmatrix}} \overline{I}_N(X)$$ 
where $\overline{I}_N(X)$ is the stack of all connected components $Y$ of $I_N(\overline{X})$ such that $i^*Y \neq \emptyset$. We can thus define the \emph{compactified orbifold cohomology} as the following vector space:
$$
\overline{H}^{\pu}(X):= \coh\pu{\overline{I}(X)}.$$
\end{definition}
In the following sections, we will study the inertia stack for moduli of smooth genus $g$ curves and its compactified inertia stack with respect to the Deligne--Mumford compactification. In the first case we will use the theory of cyclic covers of smooth curves (see \cite{pardini}), in the second we will use the theory of admissible covers as developed in \cite{acv}.

\begin{remark} \label{stackyremark} The orbifold cohomology of $X$ only depends upon the topological space (coarse moduli space) underlying $I(X)$.  In \cite{agv2}, the authors introduce two notions related to the inertia stack: the \emph{stack of cyclotomic gerbes} (\cite[Definition 3.3.6]{agv2}) and the \emph{rigidified inertia stack} (\cite[3.4]{agv2}), showing in \cite[3.4.1]{agv2} that they are equivalent categories. It is relevant to observe that all these different notions of inertia stacks share the same coarse moduli space, and therefore they give rise to the same orbifold cohomology theory. 
\end{remark}

\subsection{The inertia stack of moduli of genus $g$ smooth curves}
\label{inertiamg}

We want to study the twisted sectors of the inertia stack of moduli of smooth genus $g$ curves. 
For this, we study the moduli stacks of cyclic ramified covers of curves of genus $g'<g$. This approach is due to Fantechi \cite{fantechi}, and builds on the theory of abelian covers of algebraic varieties (see Pardini \cite{pardini}). The first author has used this approach in \cite[Section 2.b]{pagani2} for the simpler cases when the genus is $2$, or when there is a positive number of marked points. A description of the theory of abelian covers in the case of curves and of cyclic groups that is closely related to the one we use can be found in \cite[Sections~1 and~2]{catanese}. A similar construction for covers of prime order was studied in \cite{cornalba}.

We start by summarizing informally the description of cyclic covers we will use in this paper. 
\begin{fact} \label{pardiniprop} (\cite[Proposition 2.1]{pardini}) Let $C'$ be a smooth genus $g'$ curve. Then the following data are equivalent:
\begin{itemize}
\item A cyclic (possibly ramified) $\mu_N$-cover $\psi:C \to C'$, where $C$ is a smooth curve, possibly disconnected;
\item A sequence of $N-1$ smooth effective divisors $D_1, \ldots, D_{N-1}$ (with pairwise disjoint support, possibly empty), a line bundle $L$ on $C'$ together with an isomorphism $\phi:L^{\otimes N} \to \mathcal{O}_{C'}(\sum_i i D_i)$.
\end{itemize}
\end{fact}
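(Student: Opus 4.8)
The statement to prove is Fact~\ref{pardiniprop}, the equivalence between cyclic $\mu_N$-covers $\psi\colon C \to C'$ with $C$ smooth (possibly disconnected) and the combinatorial/linear-algebraic data $(D_1,\dots,D_{N-1},L,\phi)$ with $\phi\colon L^{\otimes N}\xrightarrow{\sim}\mathcal{O}_{C'}(\sum_i iD_i)$. This is a classical statement (Pardini), so my plan is to sketch the two directions of the correspondence and indicate why each is well-defined and why they are mutually inverse, rather than to reprove Pardini's general theory of abelian covers from scratch.

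**From a cover to the linear data.**

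Given a cyclic cover $\psi\colon C\to C'$, I would push forward the structure sheaf: $\psi_*\mathcal{O}_C$ is a rank $N$ sheaf of $\mathcal{O}_{C'}$-algebras carrying a $\mu_N$-action, hence decomposes into eigensheaves $\psi_*\mathcal{O}_C=\bigoplus_{i=0}^{N-1} L_i^{\vee}$ (I use the dual so that multiplication looks natural), where $L_0=\mathcal{O}_{C'}$ and each $L_i^{\vee}$ is a line bundle because $C'$ is a smooth curve and $\psi$ is flat of degree $N$. Setting $L:=L_1$, the algebra multiplication gives maps $L_i^{\vee}\otimes L_j^{\vee}\to L_{i+j}^{\vee}$; analyzing these over the branch points, the local monodromy of $\psi$ around a branch point of type $i$ forces the multiplication $L_1^{\vee\,\otimes N}\to\mathcal{O}_{C'}$ to vanish exactly along a reduced divisor, and collecting the branch points according to their monodromy index $i\in\{1,\dots,N-1\}$ defines the $D_i$. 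Smoothness of $C$ is exactly the condition that the $D_i$ have smooth (reduced) support and pairwise disjoint support, and that $L_i^{\otimes}$ is determined: one shows $L_i\cong L^{\otimes i}\otimes\mathcal{O}_{C'}(-\sum_{j}\lfloor ij/N\rfloor D_j)$, and in particular for $i=N$ one recovers the required isomorphism $\phi\colon L^{\otimes N}\xrightarrow{\sim}\mathcal{O}_{C'}(\sum_i iD_i)$. The key local computation is in analytic local coordinates near a branch point, where the cover is $y\mapsto y^{N/\gcd(i,N)}$ up to the relevant quotient, and one reads off that smoothness of the total space is equivalent to reducedness and disjointness of the $D_i$.

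**From the linear data to a cover.**

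Conversely, given $(D_i,L,\phi)$, I would build $C$ as the relative spectrum $\spec_{C'}\mathcal{A}$ of the $\mathcal{O}_{C'}$-algebra $\mathcal{A}:=\bigoplus_{i=0}^{N-1}L^{\vee\,\otimes i}\otimes\mathcal{O}_{C'}(\sum_j\lfloor ij/N\rfloor D_j)$, with multiplication $\mathcal{A}_i\otimes\mathcal{A}_j\to\mathcal{A}_{i+j}$ defined using $\phi$ to absorb the ``carry'' when $i+j\ge N$ (this is where the isomorphism $\phi$, rather than just a divisor class equality, is essential). The $\mu_N$-action is the obvious grading action. One checks $\mathcal{A}$ is indeed a sheaf of algebras (associativity reduces to the associativity of addition with carries in $\mathbb{Z}/N$, together with the coherence of the floor-function identities $\lfloor ij/N\rfloor+\lfloor kj/N\rfloor+[\text{carry}]=\lfloor(i+k)j/N\rfloor$), that $C=\spec_{C'}\mathcal{A}$ is a smooth curve precisely because the $D_i$ are smooth and disjoint (again a local computation at the branch points), and that $\psi\colon C\to C'$ is a cyclic $\mu_N$-cover with the prescribed branch data. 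Finally I would verify that the two constructions are inverse to each other: starting from a cover, the eigensheaf decomposition reproduces exactly the algebra $\mathcal{A}$ above; starting from linear data, computing $\psi_*\mathcal{O}_C$ gives back $(D_i,L,\phi)$, with $\phi$ recovered from the $\mathcal{O}_{C'}$-algebra structure.

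**Main obstacle.**

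The routine parts are the global eigensheaf decomposition and the formal bookkeeping with floor functions. The genuinely delicate point — and the step I expect to be the main obstacle — is the local analysis at the branch points: one must show that the smoothness of the total space $C$ corresponds precisely to the conditions that each $D_i$ be reduced and that the $D_i$ have pairwise disjoint supports, and that the local monodromy index at a point of $D_i$ is exactly $i$. This forces one to write the cover explicitly in local coordinates, identify the normalization, and check when the resulting local ring is regular; if several $D_i$ were allowed to meet, or if some $D_i$ were non-reduced, the corresponding point of $C$ would be singular. Since this is Pardini's Proposition~2.1, I would in practice cite \cite[Proposition~2.1]{pardini} (and \cite[Sections~1--2]{catanese} for the cyclic-curve case) for the detailed local verification, reproducing only as much of the argument as is needed to fix notation and conventions (in particular the normalization $L=L_1$ and the precise formula for $L_i$) used in the rest of the paper.
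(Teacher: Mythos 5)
Your outline is essentially the argument the paper has in mind. The paper states this as a Fact with a citation to Pardini and does not reprove it per se; but the relative version is sketched inside the proof of Corollary~\ref{corrispondenza}, and your two-step plan — decompose $\psi_*\mathcal{O}_C$ into $\mu_N$-eigensheaves to extract $(L, D_1,\dots,D_{N-1},\phi)$, then reconstruct the cover as the relative spectrum of the graded algebra built from $L$ and the $D_i$ — follows the same route, with the same normalization ($L$ is the inverse of the weight-one eigensheaf, and the other eigensheaves are expressed via floor functions of $ai/N$). Your emphasis on the local analysis at the branch points (smoothness of $C$ versus reducedness and disjointness of the $D_i$) is the genuine content of the claim, and is indeed where Pardini's Proposition~2.1 does its work.

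One divergence is worth flagging. In the reconstruction step you form the algebra $\mathcal{A}=\bigoplus_a\mathcal{A}_a$ with $\mathcal{A}_a=L^{\vee\otimes a}\otimes\mathcal{O}_{C'}\bigl(\sum_j\lfloor aj/N\rfloor D_j\bigr)$ and assert that $\spec_{C'}\mathcal{A}$ is already smooth, whereas the paper's sketch defines a ring $R=\bigoplus_a L_a$ and then passes to the \emph{normalization} of $\spec R$. Your sign is the right one: the inclusion $L^{\vee\otimes a}\hookrightarrow\mathcal{A}_a$ is induced by the algebra structure and vanishes along the branch divisors, so the twist must be by a non-negative multiple of the $D_i$; equivalently, setting $a=N$ in the formula must return $\mathcal{O}_{C'}$, which forces the plus sign. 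With the correct sign, even at a point of $D_i$ with $\gcd(i,N)>1$ the local algebra splits as a product of discrete valuation rings, so $\spec_{C'}\mathcal{A}$ is smooth and no normalization is needed. The paper's formula~\eqref{ellea} carries a minus sign in the exponent of $\mathcal{O}_{C'}(D_i)$ and the sentence that follows asserts $[\frac{ai}{N}]+[\frac{bi}{N}]-[\frac{(a+b)i}{N}]\in\{0,1\}$, both of which are off by a sign from the standard convention and from what your derivation gives; this seems to be a typo in the paper that your version corrects, and it is probably the reason the paper's sketch inserts the normalization step. Otherwise your proposal is in full agreement with the paper's intended argument and with the cited references.
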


With this result in mind, let us define:
\begin{definition} \label{admissible} Let $g>1$ be an integer. A $g$-admissible datum is an $(N+1)$-tuple of nonnegative integers $A=(g',N;d_1,\ldots,d_{N-1})$ with $N \geq 2$ and $g'\leq g$, satisfying the following conditions:
\begin{itemize}
\item Riemann--Hurwitz formula 
\begin{equation}\label{riemann-hurwitz}
2g-2=N (2 g'-2)+  \left(\sum d_i \ \gcd(i,N)\left(\frac{N}{\gcd(i,N)}-1\right)  \right);
\end{equation}
\item the structural equation of abelian covers 
\begin{equation}\label{abcovers}
\sum i \ d_i= 0 \mod N.
\end{equation}
\end{itemize}
The integers $N$ and $g'$ will be called respectively the \emph{order} and the \emph{base genus} of the $g$-admissible datum $A$.
\end{definition}

Note that, for a fixed $g$, the set of all $g$-admissible data $A$ is finite. With every admissible $g$-datum, we associate the integers $d= \sum d_i$, and a disjoint union decomposition $\{1,\ldots, d\}= \coprod_{i=1}^{N-1} J_i$ by:
$$
J_i:= \left\{j | \ \sum_{l<i} d_l < j < \sum_{l \leq i} d_l \right\}.
$$
Moreover, we will denote by $S_A$ the subgroup of $\s_d$ (the symmetric group on $d$ elements) defined by  
$ S_A := \left\{ \sigma| \ \sigma (J_i)=J_i \right\}$. 

 We shall now construct the twisted sectors of $\mathcal{M}_g$ as stacks of cyclic $N$-covers of curves of genus $g'$, with branch locus of type $d_1, \ldots, d_{N-1}$.

\begin{definition} \label{settoretwistato} Let $A$ be a $g$-admissible datum. We define the stack $\mathcal{M}_A$ whose objects over a scheme $S$ are $(N+2)$-tuples $(C,D_1, \ldots, D_{N-1}, L, \phi)$, where $C$ is a smooth family of genus $g'$ curves over $S$, the $D_i$ are sections of $(\Sym^{d_i}_SC\setminus \Delta_{d_i})\rightarrow S$ (where $\Delta_{d_i}$ denotes the big diagonal) defining disjoint divisors on $C$, $L$ is a line bundle and $\phi:L^{\otimes N} \to \mathcal{O}_{C} (\sum  i D_i)$ is an isomorphism.

The stack $\mathcal{M}_A'$ is defined as the open and closed substack of $\mathcal{M}_A$ whose objects under the correspondence described in Fact \ref{pardiniprop} are connected covers.
\end{definition}
\begin{remark} \label{condizioneconnessione} Let $A$ be a $g$-admissible datum, and define $k$ as the greatest common divisor of $N$ and all the $i$ with $d_i \neq 0$.
The condition of connectedness for the cover in the last sentence of Definition~\ref{settoretwistato} is satisfied when
\begin{equation}\label{orderm}
L^{\otimes\frac{N}{k}}~\otimes~\mathcal{O}_C(-\sum\frac{i}{k}D_i)
\end{equation}
has precisely order $k$ in the Picard group of $C$. When $g'$ equals $0$, then $\mathcal{M}_A$ is always connected, thus $\mathcal{M}_A'$ is empty when $k>1$, or it coincides with $\mathcal{M}_A$ when $k=1$. When $g'>0$, fixing a proper divisor of $k$ as the order of \eqref{orderm} determines an open and closed substack of $\mathcal{M}_A$. As a consequence of Theorem \ref{connessione}, we see \emph{a posteriori} that each of these open and closed substacks (in particular also $\mathcal{M}_A'$) is in fact connected.
\end{remark} 

\begin{remark} \label{esisteolofisso} Let us be more explicit about the morphisms of $\mathcal{M}_A(S)$. Let $(C, D_i, L,$ $\phi)$ and $(C', D_i', L', \phi')$ be two objects as in Definition~\ref{settoretwistato}. Then a morphism between them is a couple of isomorphisms $(\sigma:\;C\rightarrow C', \tau: \sigma^*L'\rightarrow L)$ satisfying the following conditions: The map $\sigma$ is an isomorphism of curves such that $\sigma^*(D_i')=D_i$ and $\tau$ is an isomorphism of line bundles that makes the following diagram commute:
\begin{equation} \label{diagrammone}
\xymatrix{ \sigma^*(L'^{\otimes N}) \ar[r]^{\hspace{-0.5cm}\sigma^*(\phi')} \ar[d]^{\tau^{\otimes N}} & \sigma^*\left( \mathcal{O}_{C'}( \sum i D_i')\right) \ar[d]^{\gamma} \\
L^{\otimes N} \ar[r]^{\hspace{-0.5cm}\phi} & \mathcal{O}_C(\sum i D_i),}
\end{equation}
where we denoted by $\gamma$ the isomorphism induced by $\sigma$. 
A different definition for a morphism between the two families $(C, D_i, L, \phi)$ and $(C', D_i', L', \phi')$ is the following: a single isomorphism $\sigma: C \to C'$ satisfying $\sigma^*(D_i')=D_i$ and such that there exists an isomorphism $\tau:\;\sigma^*L'\rightarrow L$ making diagram~\eqref{diagrammone} commute. These two different definitions of morphisms give rise to two different stacks, which share the same coarse moduli space. \end{remark}

\begin{remark} Let us denote by $\mathcal{M}_{g',d}(B \mu_N)$ the open substack of the moduli stack of stable maps $\mathcal{K}_{g',d}(B \mu_N)$ whose source curve is smooth. The moduli stack $\mathcal{K}_{g',d}(B \mu_N)$ is defined in \cite{abvis2}; see also \cite{acv}, where $\mathcal{K}_{g',d}(B \mu_N)$ is denoted $\mathcal{B}_{g',d}(\mu_N)$. We observe that the stack $\mathcal{M}_A$ we have just defined (with the first definition of morphisms in Remark \ref{esisteolofisso}) is an open and closed substack of the quotient stack $[\mathcal{M}_{g',d}(B \mu_N)/S_A]$ prescribed by the assignment of the ramifications $d_1, \ldots, d_{N-1}$.
\end{remark}

We now show that the moduli stacks $\mathcal M_A'$ we have just constructed constitute open and closed substacks of the inertia stack of $\mathcal{M}_g$.

\begin{corollary} \label{corrispondenza} Let us fix $g,N >1$. Then the stack $I_N(\mathcal{M}_g)$ of Definition \ref{definertia} is isomorphic to the disjoint union of all nonempty stacks $\mathcal{M}_A'$ 
for all $g$-admissible data $A=(g',m,d_1,\ldots,d_{m-1})$ with order $m$ equal to $N$.
\end{corollary}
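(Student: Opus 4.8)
The plan is to construct mutually inverse morphisms between $I_N(\mathcal{M}_g)$ and the disjoint union $\coprod_A \mathcal{M}_A'$ (over $g$-admissible data $A$ of order $N$), working at the level of families over a test scheme $S$. Starting from an object $(\xi,\alpha)$ of $I_N(\mathcal{M}_g)(S)$, i.e.\ a family of smooth genus $g$ curves $\mathcal{C}\to S$ together with an injection $\alpha\colon\mu_N\to\Aut(\mathcal{C}/S)$, I would form the quotient family $\mathcal{C}'=\mathcal{C}/\mu_N\to S$, which is a family of smooth curves whose genus $g'$ is forced to satisfy $g'\le g$. The $\mu_N$-action makes $\psi\colon\mathcal{C}\to\mathcal{C}'$ a cyclic $\mu_N$-cover, so Fact~\ref{pardiniprop} (in its relative form) supplies the disjoint smooth effective divisors $D_1,\ldots,D_{N-1}$ on $\mathcal{C}'$, the line bundle $L$, and the isomorphism $\phi\colon L^{\otimes N}\to\mathcal{O}_{\mathcal{C}'}(\sum i D_i)$; the divisor $D_i$ records the locus over which the local monodromy of the cover is $\zeta\mapsto\zeta^i$. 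This produces an object of some $\mathcal{M}_A$, and since $\mathcal{C}$ is connected it lands in $\mathcal{M}_A'$; the Riemann--Hurwitz identity \eqref{riemann-hurwitz} and the structural equation \eqref{abcovers} are exactly the constraints that the tuple $(g',N;d_1,\ldots,d_{N-1})$, with $d_i=\deg D_i$, must satisfy, so $A$ is genuinely a $g$-admissible datum of order $N$.

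Conversely, given an object $(\mathcal{C}',D_1,\ldots,D_{N-1},L,\phi)$ of $\mathcal{M}_A'$, the other direction of Fact~\ref{pardiniprop} reconstructs the cyclic cover $\psi\colon\mathcal{C}\to\mathcal{C}'$ with its $\mu_N$-action; connectedness of the cover (Definition~\ref{settoretwistato}) together with the Riemann--Hurwitz formula guarantees that $\mathcal{C}\to S$ is a family of smooth connected genus $g$ curves, and the $\mu_N$-action, being by construction faithful (this is where one uses that $\phi$ is an isomorphism onto $\mathcal{O}_{\mathcal{C}'}(\sum i D_i)$ rather than a power of a proper sub-datum, equivalently the ``primitivity'' built into the definition), gives an injection $\alpha\colon\mu_N\hookrightarrow\Aut(\mathcal{C}/S)$. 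Thus we obtain an object of $I_N(\mathcal{M}_g)(S)$.

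I would then check that these two assignments are pseudo-functorial in $S$ and are inverse equivalences of groupoids, which amounts to matching up morphisms. A morphism $g\colon\xi\to\xi'$ in $I_N(\mathcal{M}_g)(S)$ is an isomorphism of curves commuting with $\alpha(1)$ and $\alpha'(1)$; such a $g$ descends to the quotients and, by functoriality of the construction in Fact~\ref{pardiniprop}, induces an isomorphism of the associated data $(\sigma,\tau)$ as in diagram~\eqref{diagrammone} (here one uses the first of the two notions of morphism discussed in Remark~\ref{esisteolofisso}, the one that remembers $\tau$, since that is the one under which $\mathcal{M}_A$ is the relevant open and closed substack of $[\mathcal{M}_{g',d}(B\mu_N)/S_A]$). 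Running the reconstruction backwards shows the correspondence on morphisms is a bijection. Finally, uniqueness of the quotient and of $A$ shows that distinct $\mathcal{M}_A'$ map to disjoint open and closed substacks and that these exhaust $I_N(\mathcal{M}_g)$, giving the claimed isomorphism of stacks.

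The main obstacle I anticipate is not the pointwise geometry but making everything \emph{relative} and compatible with the stack structure: one must verify that forming $\mathcal{C}/\mu_N$ commutes with base change and yields a smooth family (so that $g'$ is locally constant on $S$, allowing the decomposition into the discrete pieces $\mathcal{M}_A'$), that the divisors $D_i$ glue to sections of the relative symmetric powers away from the diagonal as in Definition~\ref{settoretwistato}, and above all that the bookkeeping of local monodromies is consistent — i.e.\ that the faithfulness of $\alpha$ corresponds precisely to the connectedness/primitivity condition \eqref{orderm} and that the induced integers $d_i=\deg D_i$ automatically satisfy \eqref{riemann-hurwitz} and \eqref{abcovers}. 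These are the points where one genuinely invokes Pardini's theory \cite{pardini} in families rather than just over a point.
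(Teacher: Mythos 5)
Your proposal is correct and follows essentially the same approach as the paper's proof: you construct the equivalence of groupoids at the level of families over a test scheme $S$, using the quotient $\mathcal{C}\mapsto\mathcal{C}/\mu_N$ in one direction and Pardini's reconstruction (Fact~\ref{pardiniprop} in relative form) in the other, and you invoke the first morphism definition of Remark~\ref{esisteolofisso} for full faithfulness — precisely what the paper does, though the paper spells out the eigensheaf decomposition $\psi_*\mathcal O_X=\bigoplus L_i$ and the explicit ring structure on $R=\bigoplus L_i$ where you cite the fact as a black box. One small correction: the faithfulness of the reconstructed $\mu_N$-action comes from the cover being of degree $N$ with $\mathcal C'$ as its $\mu_N$-quotient (a kernel of order $k>1$ would drop the degree to $N/k$), rather than from the connectedness/primitivity condition \eqref{orderm}, which instead governs whether the total space is connected.
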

\begin{proof}
Follows from Definitions \ref{admissible}, \ref{settoretwistato} and by adapting the proof of~\cite[Theorem~2.1, Proposition~2.1]{pardini} to this relative case (cf. Fact \ref{pardiniprop}). Indeed, there is a base-preserving equivalence of categories: \begin{equation} \label{correspondence} I_N(\mathcal{M}_g)(S) \to \coprod_A \mathcal{M}'_A(S),\end{equation}
where in the right hand side the disjoint union is taken over all $g$-admissible data with order $N$. We sketch the proof of this well--known fact, by explicitly defining the (functorial) correspondence~\eqref{correspondence}.
Let us assume that a  $\mu_N$-cover $\psi: X \to C$ is given (over a base $S$).
Over each point $s\in S$, the branch divisor $D_s$ of the cover $X_s\rightarrow C_s$ can be split according to local monodromy in smooth effective  divisors $D_{1,s}, \ldots, D_{N-1,s}$, having pairwise disjoint support. Identifying these divisors with sections of the appropriate symmetric product of $C\rightarrow S$ gives the $D_i$. Each $D_i$ defines a codimension $1$ subscheme of $C$ which does not contain any fibre of $C\rightarrow S$, hence they give rise to effective Cartier divisors on $C$.
At this point, we only need to construct the line bundle $L$ together with the isomorphism $\phi:\;L^{\otimes N} \to \mathcal{O}_{C}(\sum_i i D_i)$. 
Since the cover is nontrivial, the action of $\mu_N$ on the push-forward sheaf $\psi_*(\mathcal{O}_X)$ defines a splitting as a direct sum of line bundles:
$$
\psi_* \mathcal{O}_X= L_0 \oplus \ldots \oplus L_{N-1}
$$
where $L_i$ is the subsheaf of $\psi_*(\mathcal{O}_X)$ of sections where $\mu_N$ acts with weight $i$. The line bundle $L$  is then defined as $L_1^{\vee}$. By viewing the sections of $L=L_1^{\vee}$ as functions on the total space of the line bundle $L_1$, and hence also as functions on its trivial section $C$, we obtain an identification
$$
\phi:\; L_1^{\otimes N} \otimes \mathcal{O}_C (\sum_i i D_i ) \to \mathcal{O}_C.
$$

The correspondence just defined is essentially surjective. Indeed, if the line bundle $L$ is given over $C'$, we set $L_1:= L^{\vee}$. The line bundles $L_a$ can then be defined as:
\begin{equation} \label{ellea}
L_a:= L_1^{\otimes a} \otimes \bigotimes_{i=1}^{N-1} \mathcal{O}_C(D_i)^{- [\frac{a i}{N} ]} \quad a=0,\ldots, N-1
\end{equation}
Now the number $[\frac{a i }{N}]+ [\frac{b i }{N}]- [\frac{(a+b) i }{N}]$ can either be $0$ or $1$. In both cases, the canonical sections of the line bundles:
$$
\mathcal{O}_C(D_i)^{[\frac{a i }{N}]+ [\frac{b i }{N}]- [\frac{(a+b) i }{N}]}
$$
permit the definition of a ring structure over $R:=\bigoplus_{i=0}^{N-1} L_i$. Now the normalization of the spectrum of $R$ reconstructs the smooth $\mu_N$-cover of $C'$ up to isomorphism of $\mu_N$-cover. 
If one considers the definition of morphisms in the groupoid $\mathcal{M}_{A}'(S)$ given in Remark~\ref{esisteolofisso}, 
one can also check that the correspondence is fully faithful, hence an equivalence of categories.  
\end{proof}

The moduli spaces $\mathcal{M}_A'$ are indeed connected. This would follow, after some work, from \cite[Theorem 1.2]{edmonds}. We refer to a more recent work of Catanese, where the result we need is stated in the same language used in this paper.

\begin{theorem} (\cite[Theorem 2.4]{catanese}) \label{connessione} Let $A$ be a $g$-admissible datum. When the stack $\mathcal{M}_A'$ is nonempty, it is connected.
\end{theorem}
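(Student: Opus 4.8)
The plan is to deduce the connectedness of $\mathcal{M}_A'$ from the topological statement that the corresponding moduli space of cyclic covers with prescribed numerical invariants is connected; since this is precisely the content of \cite[Theorem 2.4]{catanese}, the proof is mostly a matter of matching our setup to Catanese's. First I would recall that, by Fact~\ref{pardiniprop} and Corollary~\ref{corrispondenza}, a point of $\mathcal{M}_A'$ is the same as a connected smooth $\mu_N$-cover $\psi: C \to C'$ of a smooth genus~$g'$ curve whose branch divisor splits according to local monodromy into effective divisors $D_1,\ldots,D_{N-1}$ of degrees $d_1,\ldots,d_{N-1}$. Via Pardini's dictionary this datum is equivalent to the triple $(D_1,\ldots,D_{N-1},L,\phi)$, and connectedness of the cover is equivalent to the order condition on the line bundle \eqref{orderm} stated in Remark~\ref{condizioneconnessione}. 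I would then observe that, fixing $g'$, $N$ and the $d_i$, the discrete invariants appearing here agree with the invariants that Catanese attaches to a cyclic cover (the base genus, the order of the group, and the \virg{type} recording the partition of the branch points by local monodromy).

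The second step is to invoke \cite[Theorem 2.4]{catanese}: the theorem asserts that the moduli space of connected cyclic $\mu_N$-covers of smooth genus~$g'$ curves with fixed branching type is irreducible (in particular connected), since these numerical data single out a connected component of the Hurwitz-type space of such covers. Because $\mathcal{M}_A'$ is, by construction (see the remark following Definition~\ref{settoretwistato}), an open and closed substack of the stack of all such covers cut out precisely by fixing $g'$, $N$, $d_1,\ldots,d_{N-1}$, and because a stack is connected as soon as its coarse moduli space is (and the coarse space of $\mathcal{M}_A'$ is exactly Catanese's moduli space), nonemptiness of $\mathcal{M}_A'$ forces it to be connected.

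I would also include the caveat already flagged in the text: the same conclusion can alternatively be extracted, with more effort, from \cite[Theorem 1.2]{edmonds}, which gives the connectedness of the relevant space of branched covers in purely topological terms; one would need to translate Edmonds's combinatorial classification of branched covers of a surface into the algebro-geometric language of Pardini's building data, check that the numerical invariants $(g',N;d_1,\ldots,d_{N-1})$ together with the choice of the order of \eqref{orderm} determine a single topological equivalence class of covers, and then invoke the fact that such a class forms a connected family. Citing Catanese lets us bypass this translation.

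The main obstacle, and the reason this is stated as an external theorem rather than proved from scratch, is precisely the topological connectedness input: showing that two cyclic covers with the same numerical type can be deformed into one another requires an Edmonds–Clebsch–Hurwitz style transitivity argument on the monodromy data, which is genuinely nontrivial when $g'>0$ (one must use that the mapping class group of a positive-genus surface with marked points acts transitively on the appropriate set of tuples in $\mu_N$ with prescribed local behaviour and prescribed image of the \virg{global} class \eqref{orderm}). Everything else — the dictionary of Fact~\ref{pardiniprop}, the identification in Corollary~\ref{corrispondenza}, and the passage from coarse space to stack — is formal.
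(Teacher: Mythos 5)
Your proposal matches the paper's treatment exactly: the theorem is not proved in the paper but cited from Catanese, with a preceding remark that it could alternatively be extracted with more work from Edmonds, and your write-up correctly reconstructs the dictionary (Pardini building data, branching type, order condition on the line bundle) needed to match the two setups and correctly identifies the topological transitivity of the mapping-class-group action on monodromy data as the genuine content being outsourced. Nothing to add.
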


In particular, as a consequence of this connectedness result, the nonempty moduli stacks $\mathcal{M}_A'$ give all the twisted sectors of the inertia stack of $\mathcal{M}_g$. 

\begin{remark}  Working with the second definition in Remark \ref{esisteolofisso} of morphism for the stack $\mathcal{M}_A$, one obtains a decomposition of the rigidified inertia stack (see Remark~\ref{stackyremark}) of $\mathcal{M}_g$.
\end{remark}

\section{The inertia stack of $\mathcal{M}_3$} \label{inertiam3}
In this section, we study the geometry of the twisted sectors of the inertia stack of the moduli space of smooth, genus $3$ curves. We determine the cohomology of all these twisted sectors as a graded vector space with Hodge structures. We shall state these results in the form of polynomials with coefficients in $K_0(\hsq)$ (see Section~\ref{notation}).

Our approach is based on the correspondence between twisted sectors and $g$-admissible data introduced in the previous section. Of course, in the case of genus $3$ also a direct approach is possible by classifying all automorphisms of plane quartic curves (as in e.g. \cite[Lemma~6.5.1]{dolgachev-topics}) and of all hyperelliptic genus $3$ curves. However, our approach seems more suitable for cohomological computations and has the advantage that it generalizes to higher genus.

If $X$ is a twisted sector of $I(\mathcal{M}_3)$, we have seen in the previous section that $X \cong \mathcal{M}_A$ for $A$ a certain $3$-admissible datum. We start by considering the admissible data with $g'=0$.

\begin{proposition}\label{basegenus0}
There are $43$ different $3$-admissible data $A$ with $g'=0$ that parametrize connected covers. The complete list of these admissible data and of the Hodge--Grothendieck characters for compact support of the associated twisted sectors $\mathcal M_A$ is given in Tables~\ref{base0dim>0} and~\ref{base0dim0}.
\end{proposition}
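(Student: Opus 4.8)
The plan is to enumerate systematically all $3$-admissible data with base genus $g'=0$, discard those that do not parametrize connected covers, and then compute the cohomology of each surviving twisted sector $\mathcal{M}_A$ by identifying it with a quotient of a configuration space of points on $\mathbb{P}^1$. First I would run through the combinatorics: by Definition~\ref{admissible}, an admissible datum with $g'=0$ is a tuple $(0,N;d_1,\ldots,d_{N-1})$ of nonnegative integers subject to the Riemann--Hurwitz relation \eqref{riemann-hurwitz}, which with $g=3$, $g'=0$ reads $4 = -2N + \sum_i d_i\,\gcd(i,N)\!\left(N/\gcd(i,N)-1\right)$, together with the congruence \eqref{abcovers}, $\sum_i i\,d_i \equiv 0 \pmod N$. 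Since every summand on the right of the Riemann--Hurwitz equation is nonnegative, one gets $\sum_i d_i\,\gcd(i,N)(N/\gcd(i,N)-1) = 2N+4$, which bounds $N$ (a single point with $d_i\neq 0$ already contributes at least $N-1$, and nontriviality forces enough points) and then bounds each $d_i$; a finite check — best organized by the value of $N$, running $N=2,3,4,5,6,7,8,9,\dots$ up to the largest order compatible with the bound — produces the full finite list. For each candidate I would then check nonemptiness (a cover with the prescribed branching exists: this is governed by the existence of the line bundle $L$ on $\mathbb{P}^1$ with $L^{\otimes N}\cong\mathcal{O}(\sum i d_i)$, which on $\mathbb{P}^1$ is automatic precisely when $N \mid \sum i d_i$, i.e.\ condition \eqref{abcovers}) and connectedness (by Remark~\ref{condizioneconnessione}, since $g'=0$ the stack $\mathcal{M}_A$ is connected and $\mathcal{M}_A'$ equals $\mathcal{M}_A$ exactly when $k=\gcd(N,\{i:d_i\neq 0\})=1$, and is empty otherwise). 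Counting the data surviving both filters should give the asserted $43$.

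Next I would compute $\chihdg{\mathcal{M}_A}$ for each of these $43$ data. The key structural point, following Corollary~\ref{corrispondenza} and Fact~\ref{pardiniprop}, is that for $g'=0$ a cyclic cover is determined by its branch divisor on $\mathbb{P}^1$ (the line bundle $L$ being unique up to isomorphism, and its automorphisms being absorbed), so $\mathcal{M}_A$ is isomorphic to the quotient stack $\left[ M_{0,d_1,\ldots,d_{N-1}} / \mathrm{PGL}_2 \right]$, where $M_{0,d_1,\ldots,d_{N-1}}$ is the space of configurations of $d=\sum d_i$ distinct ordered points on $\mathbb{P}^1$ partitioned into labelled blocks $J_1,\ldots,J_{N-1}$ of sizes $d_1,\ldots,d_{N-1}$, modulo the subgroup $S_A\subset\mathfrak{S}_d$ permuting points within each block. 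Equivalently, $\mathcal{M}_A \cong \left[ F(\mathbb{P}^1, d)/(S_A\times\mathrm{PGL}_2) \right]$ in the notation of the paper (with the caveat, noted in Remark~\ref{stackyremark}, that for orbifold cohomology only the coarse space matters, so finite stabilizers may be ignored). The Hodge--Grothendieck character of $F(\mathbb{P}^1,d)$ is classical — $\chihdg{F(\mathbb{P}^1,d)} = \prod_{j=0}^{d-1}(L+1-j)$, or one uses the fibration forgetting points — and it carries a natural $\mathfrak{S}_d$-action whose character on cohomology is known; one then takes $S_A$-invariants (a plethysm/symmetric-function computation, block by block) and divides by $\mathrm{PGL}_2$, which in $K_0(\hsq)$ amounts to multiplying by $(L^3-L)^{-1}$ after having stripped the $\mathrm{PGL}_2$-action — or, more cleanly, to first rigidifying by fixing three of the points (when $d\geq 3$ in a single block, or using points in distinct blocks) to trivialize the $\mathrm{PGL}_2$-action, and handling the low-dimensional sporadic cases ($d\leq 2$, giving $0$-dimensional $\mathcal{M}_A$) separately by hand. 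Carrying this out for each datum and recording the result populates Tables~\ref{base0dim>0} and~\ref{base0dim0}.

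I expect the main obstacle to be neither the existence of the cyclic cover nor the abstract identification of $\mathcal{M}_A$ as a configuration-space quotient, but rather the bookkeeping needed to make the enumeration provably complete and the $S_A$-equivariant cohomology computation correct in every one of the $43$ cases: one must be careful that the quotient by $S_A\times\mathrm{PGL}_2$ is taken in the right order, that the $\mathrm{PGL}_2$-action on configuration space is free modulo finite stabilizers (which can fail for very symmetric configurations but does not affect rational cohomology of the coarse space), and that the plethystic substitution implementing "$S_A$-invariants of $H^\bullet(F(\mathbb{P}^1,d))$" is applied to the correct symmetric functions block by block. A convenient way to organize and cross-check the computation is to note that the coarse space of $\left[F(\mathbb{P}^1,d)/(S_A\times\mathrm{PGL}_2)\right]$ is, when $d-3\geq 0$, a locally closed subvariety of a product of symmetric powers $\mathrm{Sym}^{d_1}\mathbb{P}^1\times\cdots\times\mathrm{Sym}^{d_{N-1}}\mathbb{P}^1$ cut out by disjointness, modulo $\mathrm{PGL}_2$; its class in $K_0(\hsq)$ follows from inclusion-exclusion over the diagonals, using $\chihdg{\mathrm{Sym}^k\mathbb{P}^1}=\chihdg{\mathbb{P}^k}=1+L+\cdots+L^k$. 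The sporadic low-dimensional sectors ($\dim\mathcal{M}_A=0$, i.e.\ $d\leq 3$) should be listed in Table~\ref{base0dim0} with their characters computed directly, while the positive-dimensional ones go into Table~\ref{base0dim>0}; the final count $43 = \#\text{Table}~\ref{base0dim>0}\text{-data} + \#\text{Table}~\ref{base0dim0}\text{-data}$ then serves as an internal consistency check on the enumeration.
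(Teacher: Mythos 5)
Your overall plan coincides with the paper's: reduce to the identification $\mathcal M_A\cong[\mathcal M_{0,d}/S_A]$ (for you, $[F(\Pp1,d)/(S_A\times\mathrm{PGL}_2)]$, which is the same thing once the free $\mathrm{PGL}_2$-action is quotiented out), enumerate solutions of \eqref{riemann-hurwitz}--\eqref{abcovers} subject to the connectedness criterion $k=\gcd\bigl(N,\{i:d_i\neq0\}\bigr)=1$ from Remark~\ref{condizioneconnessione}, and then extract $S_A$-invariants. Where you diverge is in the cohomological input: the paper simply invokes Getzler's closed formula for the $\s_d$-equivariant Hodge--Grothendieck character of $\Mm0d$ (\cite[5.6]{getzleroperads}), whereas you propose to rebuild that equivariant character from scratch via $F(\Pp1,d)$ or via inclusion--exclusion over diagonals in $\Sym^{d_1}\Pp1\times\cdots\times\Sym^{d_{N-1}}\Pp1$. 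That route is more elementary and self-contained but strictly longer, and it carries one technical wrinkle you should spell out more carefully: one cannot literally ``multiply by $(L^3-L)^{-1}$'' in $K_0(\hsq)$, and the standard fix of rigidifying by fixing three points is \emph{not} $\s_d$-equivariant, so it does not directly yield the $S_A$-isotypic decomposition you need. The inclusion--exclusion over symmetric-power diagonals you sketch at the end does work (each stratum is already $S_A$-invariant, and the $\mathrm{PGL}_2$-quotient is taken once at the end on a free action), but essentially amounts to reproving the equivariant version of Getzler's formula that the paper uses as a black box. In short: same decomposition, same enumeration, different (heavier) evaluation step; the paper's citation to \cite{getzleroperads} is the efficient shortcut you are re-deriving.
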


\begin{table}
\begin{center}
\begin{tabular}{|c|c|c|c|} \hline 
&&&\\[-10pt]
$A$ & $\chihdg{\mathcal M_A}$ & $\chihdg{\overline{\mathcal M}_A}$&$a(\mathcal M_A)$\\[1pt]
\hline \hline
&&&\\[-9pt]
$(2;8)$&$L^5$&$L^5+3L^4+6L^3+6L^2+3L+1$&$\frac{1}{2}$\\[1pt]
$(3;4,1)$&$L^2$&$L^2+2L+1$&$\frac{5}{3}$\\[1pt]
$(3;1,4)$&$L^2$&$L^2+2L+1$&$\frac{7}{3}$\\[1pt]
$(4;4,0,0)$& $L$&$L+1$&$2$\\[1pt]
$(4;0,0,4)$ & $L$&$L+1$&$3$\\[1pt] 
$(4;2,3,0)$ &$L^2-L$ &$L^2+2L+1$&$\frac{7}{4}$\\[1pt]
$(4;0,3,2)$ &$L^2-L$&$L^2+2L+1$&$\frac{9}{4}$\\[1pt] 
$(4;2,0,2)$ &$L-1$&$L+1$&$\frac{5}{2}$\\[1pt]
$(6;1,0,2,0,1)$ &$L-1$ &$L+1$&$\frac{5}{2}$\\[1pt] 
$(6;1,0,1,2,0)$&$L-1$ &$L+1$&$\frac{8}{3}$\\[1pt] 
$(6;0,2,1,0,1)$&$L-1$ &$L+1$&$\frac{7}{3}$\\[1pt] 
\hline
\end{tabular}\end{center}
\caption{\label{base0dim>0}Positive-dimensional twisted sectors. For the sake of brevity we omit $g'=0$ from the notation of the admissible datum.}
\end{table}

\begin{table}
\begin{center}
\begin{tabular}{|c|c||c|c|} \hline 
$3$-admissible with $g'=0$  &Age&$3$-admissible with $g'=0$  &Age\\
\hline \hline
&&&\\[-9pt]
$(7;2,0,0,0,1,0)$ &$\frac{20}{7}$ & $(9;1,0,1,0,1,0,0,0)$ &$\frac{26}{9}$\\[1pt] 
$(7;0,1,0,0,0,2)$ &$\frac{22}{7}$ & $(9;0,0,0,1,0,1,0,1)$ &$\frac{28}{9}$\\[1pt]
$(7;1,1,0,1,0,0)$ &$3$ & $(9;0,1,1,1,0,0,0,0)$ &$\frac{29}{9}$\\[1pt] 
$(7;0,0,1,0,1,1)$ &$3$ & $(9;0,0,0,0,1,1,1,0)$ &$\frac{25}{9}$\\[1pt]  
$(7;1,0,2,0,0,0)$ &$\frac{23}{7}$ & $(12;10100001000)$&$\frac{10}{3}$\\[1pt] 
$(7;0,0,0,2,0,1)$ &$\frac{19}{7}$ & $(12;00010000101)$ &$\frac{8}{3}$\\[1pt] 
$(7;0,2,1,0,0,0)$ &$\frac{24}{7}$ & $(12;10001100000)$ &$\frac{13}{4}$\\[1pt] 
$(7;0,0,0,1,2,0)$ &$\frac{18}{7}$ & $(12;00000110001)$ &$\frac{11}{4}$\\[1pt]  
$(8;2,0,0,0,0,1,0)$ &$\frac{13}{4}$ & $(12;00111000000)$ &$\frac{8}{3}$\\[1pt] 
$(8;0,1,0,0,0,0,2)$ &$\frac{11}{4}$ & $(12;00000011100)$ &$\frac{10}{3}$\\[1pt]
$(8;1,1,0,0,1,0,0)$ &$3$ & $(14;1000011000000)$ &$\frac{51}{14}$\\[1pt] 
$(8;0,0,1,0,0,1,1)$ &$3$ & $(14;0000001100001)$ &$\frac{33}{14}$\\[1pt]
$(8;0,1,2,0,0,0,0)$&$\frac{11}{4}$ & $(14;0100101000000)$ &$\frac{41}{14}$\\[1pt] 
$(8;0,0,0,0,2,1,0)$ &$\frac{13}{4}$ & $(14;0000001010010)$ &$\frac{43}{14}$\\[1pt] 
$(9;1,1,0,0,0,1,0,0)$ &$\frac{31}{9}$ & $(14;0011001000000)$ &$\frac{45}{14}$\\[1pt] 
$(9;0,0,1,0,0,0,1,1)$ &$\frac{23}{9}$ & $(14;0000001001100)$&$\frac{39}{14}$\\[2pt] 
\hline
\end{tabular}
\end{center}
\caption{\label{base0dim0}$0$-dimensional twisted sectors. For the sake of brevity we omit $g'=0$ from the notation of the admissible datum.}
\end{table}

\begin{proof}
If $A$ is a $g$-admissible datum with $g'=0$, then it is easy to see that $\mathcal{M}_A \cong [\mathcal{M}_{0,d}/S_A]$ holds. Therefore one has $H_c^\pu(\mathcal{M}_A) \cong H_c^\pu(\mathcal{M}_{0,d})^{S_A}$, the $S_A$-invariant part of the cohomology with compact support of $\Mm0d$. 
Hence, for every $A$, the cohomology with compact support of $\mathcal M_A$ can be computed from the description of the cohomology of $\Mm0d$ as a representation of the symmetric group $\s_d$, which is known for every $d\geq3$ by work of Getzler \cite[5.6]{getzleroperads} (see also \cite[Theorem 2.9]{kisinlehrer}).

In our case, we need to work with connected covers, i.e. we restrict to $3$-admissible data that satisfy the connectedness condition explained in Remark \ref{condizioneconnessione} for $g'=0$.
We obtain their list (which we give in Table~\ref{base0dim>0} and~\ref{base0dim0}) by finding all solutions of equations~\eqref{riemann-hurwitz}, \eqref{abcovers} for $g=3$, and the condition explained in Remark \ref{condizioneconnessione} for $g'=0$. 
Using Getzler's formulas we compute their Hodge--Grothendieck characters for compact support, i.e. the Euler characteristic of their cohomology with compact support in the Grothendieck group of Hodge structures.
\end{proof}

\begin{remark}
If $A$ is an admissible datum with $g'=0$, the Hodge--Grothendieck character for compact support of $\mathcal M_A$  determines uniquely the cohomology of $\mathcal M_A$, because its $k$-th compactly supported cohomology carries a pure Hodge structure of weight $2\dim(\mathcal M_A)-k$.
This property holds for the space $\Mm 0d$ and follows from  the structure of $\Mm 0d$ as a complement of hyperplanes in $\C^{d-3}$. Since $\cohc\pu{\mathcal M_A}$ is a subring of $\cohc\pu{\Mm 0d}$, it holds for $\mathcal M_A$ as well.
\end{remark}

It is easy to see that there are exactly four $3$-admissible data with $g'>0$.
Following Definition \ref{settoretwistato}, they correspond to the four moduli stacks of cyclic covers:
\begin{equation}\label{abcd}
\mathcal M_{(1,2;4)},\ \ 
\mathcal M_{(1,3;1,1)},\ \ 
\mathcal M_{(1,4;0,2,0)}\ \text{ and }
\mathcal M_{(2,2;0)}.
\end{equation}
In view of Definition \ref{settoretwistato}, the moduli stacks $\mathcal M_{(g',N,d_1,\dots,d_{N-1})}$ parametrize objects of type $\left(C,L,D_1,\ldots,D_{N-1},\phi\right)$, where $C$ is a curve of genus $g'$, the $D_i$ are disjoint effective divisors of prescribed degrees $d_i$ and  $\phi:\;L^{\otimes N}\rightarrow\mathcal O_C(\sum_i iD_i)$ is an isomorphism. Hence, it suffices to compute the cohomology of the following four stacks:
\begin{align*}
\mathcal{A}&= \left\{(C,D_1,L) |\ g(C)=1 ,\ \deg(D_1)=4, \ L^{\otimes2} \cong \mathcal O_C(D_1) \right\},
\\
\mathcal{B}&= \left\{(C,D_1,D_2,L) \left|\begin{array}{l}g(C)=1, \deg(D_1)=\deg(D_2)= 1,\\ L^{\otimes 3} \cong \mathcal O_C(D_1+2 D_2)\end{array}\right.\right\},
\\
\mathcal{C}&= \left\{(C,D_2,L) |\ g(C)=1, \ \deg(D_2)=2,  \ L^{\otimes 4} \cong \mathcal O_C(2 D_2) \right\},
\\
\mathcal{D}&= \left\{(C,D_1,L) |\ g(C)=2,  \ L^{\otimes2} \cong \mathcal O_C \right\}.
\end{align*}

It is easy to see that $\mathcal{A}$ and $\mathcal{B}$ are connected, while $\mathcal{C}$ and $\mathcal{D}$ are not. The stack $\mathcal{C}$ has two open and closed substacks $\mathcal{C}'$ and $\mathcal{C}''$ that correspond, respectively, to the two conditions $L^{\otimes 2}\not\cong \mathcal O(D_2)$ and $L^{\otimes 2} \cong \mathcal O(D_2)$. The stack $\mathcal{D}$ has two open and closed substacks $\mathcal{D}'$ and $\mathcal{D}''$ that correspond, respectively, to the conditions $L\not\cong\mathcal O_C$ and $L \cong \mathcal O_C$. Observe that $\mathcal{C}''$ and $\mathcal{D}''$ parametrize \emph{disconnected} covers.

In the remainder of this section, we study the cohomology  of $\mathcal{B}$ and $\mathcal{D}'$, while we postpone the analogous computation for $\mathcal{A}$ and $\mathcal{C}'$ to the following section.

In the following lemma, we let $X_1(3)$ be the closed substack in $\mathcal{M}_{1,2}$ of curves  $(C, p_1, p_2)$ such that $p_2$ is a point of $3$-torsion for the elliptic curve $(C,p_1)$.

\begin{lemma} The coarse moduli space of $\mathcal{B}$ is isomorphic to the coarse moduli space of $\mathcal{M}_{1,2} \setminus X_1(3)$.
\end{lemma}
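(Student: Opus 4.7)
The plan is to turn the data of $\mathcal B$ into three points on the elliptic curve by using that a degree-one line bundle on a genus-one curve has a unique effective section, and then show that the required linear equivalence makes one of those three points redundant, leaving an isomorphism with $\mathcal M_{1,2}\setminus X_1(3)$.

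First, for any object $(C,D_1,D_2,L)$ of $\mathcal B$, Riemann--Roch gives $h^0(C,L)=1$, so there is a unique effective divisor (\ie a point) $q\in C$ with $L\cong\mathcal O_C(q)$. The assignment is compatible with families via the pushforward of $L$ to the base, so we get a morphism
\[
\Phi\co\mathcal B\longrightarrow\mathcal M_{1,2},\qquad (C,D_1,D_2,L)\longmapsto(C,D_2,q).
\]

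Second, I would translate the defining relation $L^{\otimes 3}\cong\mathcal O_C(D_1+2D_2)$ into the linear equivalence $3q\sim D_1+2D_2$ on $C$, equivalently $D_1\sim 3q-2D_2$ in $\operatorname{Pic}^1(C)$. Since on a curve of genus one every degree-one line bundle has a unique effective representative, this means $D_1$ is uniquely recovered from $(C,D_2,q)$. Hence $\Phi$ is injective on points and admits an explicit candidate inverse
\[
\Psi\co(C,p_1,p_2)\longmapsto(C,D_1,p_1,\mathcal O_C(p_2)),
\]
where $D_1$ denotes the unique effective divisor whose class is $3p_2-2p_1$.

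Third, I would identify the image of $\Phi$ with $\mathcal M_{1,2}\setminus X_1(3)$ by analysing the open condition $D_1\neq D_2$. Taking $D_2$ as the origin of the elliptic structure on $C$, the relation $D_1\sim 3q-2D_2$ forces $D_1=D_2$ exactly when $3q\sim 3D_2$ in $\operatorname{Pic}^3(C)$, \ie when $q-D_2$ is a $3$-torsion element of $\operatorname{Pic}^0(C)$; this is precisely the condition that $q$ be a point of $3$-torsion with respect to $D_2$, cutting out $X_1(3)$. So $\Phi$ takes values in the complement of $X_1(3)$, and conversely $\Psi$ lands in $\mathcal B$ (\ie gives disjoint $D_1,D_2$) precisely when $p_2$ is not $3$-torsion with respect to $p_1$.

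Fourth, I would verify that $\Phi$ and $\Psi$ are mutually inverse on coarse points: starting from $(C,D_1,D_2,L)$ the point $q$ defined by $L=\mathcal O_C(q)$ satisfies $3q-2D_2\sim D_1$, so $\Psi\circ\Phi$ recovers $D_1$ and $L$; starting from $(C,p_1,p_2)$ the construction tautologically returns $(C,p_1,p_2)$. The only small point to check is that $\Phi$ indeed lands in $\mathcal M_{1,2}$, \ie $q\neq D_2$; if $q=D_2$ then $D_1\sim D_2$ would force $D_1=D_2$ (two distinct effective divisors of degree one on a genus-one curve are never linearly equivalent), contradicting the disjointness of $D_1$ and $D_2$ in the definition of $\mathcal B$. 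I do not expect any genuine obstacle here; the only delicate point is that the comparison is at the level of coarse moduli spaces (the automorphism groups at a point of $\mathcal B$ and the corresponding point of $\mathcal M_{1,2}$ differ by the central $\mu_3$ coming from the cover), which is why the statement is phrased in terms of coarse spaces.
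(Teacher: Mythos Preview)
Your argument is correct and follows essentially the same route as the paper. The paper phrases the first step by introducing the auxiliary substack $\tilde{\mathcal B}=\{(C,x_1,x_2,q):3q\sim x_1+2x_2\}$ inside the universal curve $\mathcal C_{1,2}$ and then projecting via $(x_1,x_2,q)\mapsto(x_2,q)$, but this is exactly your $\Phi$ after replacing $L$ by its unique effective section; the identification of the image with the complement of $X_1(3)$ and the check that $q\neq D_2$ are carried out in the same way.
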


\begin{proof}
The moduli stack $\mathcal{B}$ parametrizes curves $C$ of genus $g'=1$, two distinct points $x,y\in C$ and a line bundle $L$ of degree $1$ on $C$. Let $\mathcal{C}_{1,2}$ be the universal curve over $\mathcal{M}_{1,2}$: it parametrizes genus $1$ curves $C$ with three points $x_1, x_2, q$ such that $x_1 \neq x_2$. We define $\tilde{\mathcal{B}}$ to be the irreducible codimension $1$ substack of $\mathcal{C}_{1,2}$ defined by the following constraint on the three points: 
$$
\tilde{\mathcal{B}}= \{(C, x_1, x_2, q)| \ 3q \equiv x_1 + 2 x_2 \}.
$$
Now it is clear that $\mathcal{B}$ and $\tilde{\mathcal{B}}$ share the same coarse moduli space. This can be checked by associating to a triple $(C,x,y,L)$ the triple $(C, x_1, x_2, q)$ where $q$ is the point on the curve $C$ determined by the isomorphism class of the line bundle $L$.
The diagram:
$$
\xymatrix@C=4pt{
{\tilde{\mathcal B} = \{3q=x_1+2 x_2\}}\ar[d]\ar@{^{(}->}[rr]&&{\mathcal C_{1,2}}\ar[d]^{\pi_x}\ar@{}[r]|{\ni}&{(x,y,q)}\ar@{|->}[d]\\
{\Mm12}\ar@{^{(}->}[rr]&&{\Mmb 12}\ar@{}[r]|{\ni}&{(y,q)}
}
$$
is cartesian. Indeed, the restriction of $\pi_x$ to the closed locus $\tilde{\mathcal B} \subset \mathcal{C}_{1,2}$ takes values in $\mathcal{M}_{1,2}$, since $q=y, 3q=x+2y \implies x=y$. Furthermore, the restriction of $\pi_x$ to $\tilde{\mathcal B}$ is an isomorphism onto the image locus, \textit{i.e.} the points where $3q \neq 3y$. From this the claim follows.
\end{proof}

\noindent From this description, we can deduce the cohomology with compact support of $\mathcal B$.

\begin{corollary} \label{psb} The cohomology with compact support of  $\mathcal{B}$ is given by
$$
P_{\mathcal{B}} (t):= \sum_{i\in\mathbb N}[\cohc i{\mathcal B}]t^i= L^2 t^4 +L t^3 + t^2.
$$
\end{corollary}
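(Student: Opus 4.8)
The plan is to play the preceding lemma off against the long exact sequence in compactly supported cohomology. Since the rational cohomology of a Deligne--Mumford stack depends only on its coarse space (Remark~\ref{stackyremark}), the lemma gives $H_c^\bullet(\mathcal{B})\cong H_c^\bullet(\mathcal{M}_{1,2}\setminus X_1(3))$, and I would compute the right-hand side from the excision sequence attached to the closed substack $X_1(3)\subset\mathcal{M}_{1,2}$ with open complement $\mathcal{M}_{1,2}\setminus X_1(3)$:
\[\cdots\to H_c^i(\mathcal{M}_{1,2}\setminus X_1(3))\to H_c^i(\mathcal{M}_{1,2})\to H_c^i(X_1(3))\to H_c^{i+1}(\mathcal{M}_{1,2}\setminus X_1(3))\to\cdots,\]
which is a sequence of mixed Hodge structures. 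Thus the two things to pin down are $H_c^\bullet(\mathcal{M}_{1,2})$ and $H_c^\bullet(X_1(3))$.

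It is classical that $\mathcal{M}_{1,2}$ has the rational cohomology of a point; being a smooth Deligne--Mumford stack of dimension $2$, Poincar\'e duality then yields $H_c^\bullet(\mathcal{M}_{1,2})\cong L^2$, concentrated in degree $4$, so $P_{\mathcal{M}_{1,2}}(t)=L^2t^4$. (A self-contained proof: in the Leray spectral sequence of the forgetful map $\mathcal{M}_{1,2}\to\mathcal{M}_{1,1}$, whose fibre is an elliptic curve minus one point, the row coming from the fibrewise $H_c^1$ is the standard rank-$2$ local system $\mathbb{V}_1$ on $\mathcal{M}_{1,1}$, whose compactly supported cohomology vanishes because the universal automorphism $-1$ acts on it by $-1$; the only surviving contribution is the fibrewise $H_c^2=\Q(-1)$, giving $H_c^2(\mathcal{M}_{1,1})\otimes L=L^2$ in degree $4$.)

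For $X_1(3)$ I would identify its coarse space with the open modular curve $Y_1(3)$ and compute it directly. An elliptic curve equipped with a point $P$ of exact order $3$ has a Tate normal form $y^2+a_1xy+a_3y=x^3$ with $P=(0,0)$, unique up to the weighted action $(a_1,a_3)\mapsto(ua_1,u^3a_3)$, and its discriminant is $a_3^3(a_1^3-27a_3)$; hence the coarse space of $X_1(3)$ is $\{\,a_3\neq0,\ a_1^3\neq27a_3\,\}/\mathbb{G}_m\cong\mathbb{A}^1\setminus\{\mathrm{pt}\}\cong\mathbb{G}_m$ (equivalently, $X_1(3)$ is connected and compactifies to a genus-zero curve with two cusps). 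Consequently $H_c^\bullet(X_1(3))\cong H_c^\bullet(\mathbb{G}_m)$, i.e. $H_c^1=\Q$, $H_c^2=L$, and $P_{X_1(3)}(t)=t+Lt^2$.

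Feeding these into the excision sequence and using that $H_c^\bullet(\mathcal{M}_{1,2})$ vanishes outside degree $4$, the connecting maps give $H_c^2(\mathcal{B})\cong H_c^1(X_1(3))=\Q$ and $H_c^3(\mathcal{B})\cong H_c^2(X_1(3))=L$, while the tail $0\to H_c^4(\mathcal{B})\to H_c^4(\mathcal{M}_{1,2})=L^2\to H_c^4(X_1(3))=0$ gives $H_c^4(\mathcal{B})=L^2$; all other groups vanish. This is precisely $P_{\mathcal{B}}(t)=t^2+Lt^3+L^2t^4$. The only non-formal ingredient is the assembly of the two inputs — in particular the identification of $X_1(3)$ with $Y_1(3)$ and its genus/cusp count — together with the care needed to track the mixed Hodge structures, so that the weight-zero class in $H_c^2(\mathcal{B})$ and the Tate twists in $H_c^3(\mathcal{B})$ and $H_c^4(\mathcal{B})$ come out as stated.
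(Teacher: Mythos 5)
Your argument is correct and follows essentially the same route as the paper: invoke the lemma identifying the coarse space of $\mathcal{B}$ with $\mathcal{M}_{1,2}\setminus X_1(3)$, feed the compactly supported cohomologies of $\mathcal{M}_{1,2}$ (concentrated in degree $4$) and of $X_1(3)$ (that of $\mathbb{G}_m$) into the excision long exact sequence. The only cosmetic difference is that you identify the coarse space of $X_1(3)$ as $\mathbb{G}_m$ via the Tate normal form of $Y_1(3)$, whereas the paper does so via the Hesse pencil quotiented by $\mu_3$; both yield $\mathbb{P}^1$ minus two points and hence the same answer.
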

\begin{proof}
The cohomology with compact support of $\mathcal{M}_{1,2}$ is concentrated in degree $4$, while the cohomology with compact support of $X_1(3)$ can be deduced from the fact that its coarse moduli space is a $\mathbb{P}^1$ minus two points.
This classical result can be proved directly by considering the coarse moduli space of $X_1(3)$ as the quotient of the pointed rational curve $\Pp1\setminus\{[0,1],[1,-3\zeta_3^k]\}$ parametrizing the Hesse pencil $\lambda(x_0^3+x_1^3+x_3^3)+\mu x_0x_1x_2=0$ by the action of $\mu_3$ generated by $[\lambda,\mu]\mapsto[\lambda,\zeta_3\mu]$.
Then the result follows from the long exact sequence of compactly supported cohomology, associated to the inclusion of $X_{1}(3)$ in $\mathcal{M}_{1,2}$ with complement isomorphic to~$\mathcal B$. 
\end{proof}

\noindent Now we study the stack $\mathcal{D}'$. It turns out that we can describe it as a quotient of a moduli stack of genus $0$ curves with marked points, by the action of a subgroup of the symmetric group that symmetrizes some of the points. This enables us to compute its cohomology from Getzler's formulas (\cite{getzleroperads}).

\begin{lemma} \label{lemmad'} The coarse moduli space of $\mathcal{D}'$ is $\mathcal{M}_{0,6}/\s_4 \times \s_2$.
\end{lemma}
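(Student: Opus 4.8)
The plan is to give a concrete combinatorial model for $\mathcal D'$ coming from the hyperelliptic description of genus $2$ curves together with the classical theory of $2$-torsion of hyperelliptic Jacobians. First I would unwind the definitions: $\mathcal D'=\mathcal M_{(2,2;0)}$ is the locus of $\mathcal D=\{(C,D_1,L):g(C)=2,\ L^{\otimes2}\cong\mathcal O_C\}$ on which $L\not\cong\mathcal O_C$. Since $d_1=0$ the divisor $D_1$ is empty, and since the isomorphism $\phi\colon L^{\otimes2}\xrightarrow{\sim}\mathcal O_C$ is unique up to a nonzero scalar that can always be absorbed into the datum $\tau$ of a morphism (Remark~\ref{esisteolofisso}), the $\C$-points of $\mathcal D'$ are exactly the isomorphism classes of pairs $(C,\eta)$ with $C$ a smooth genus $2$ curve and $\eta$ a \emph{nontrivial} $2$-torsion point of $\operatorname{Jac}(C)$.

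Next I would recall that $C$ is hyperelliptic, so its canonical map $C\to\Pp1$ is a double cover with branch locus a set $B$ of $6$ distinct points of $\Pp1$, well defined up to $\Aut(\Pp1)$, and conversely any such $B$ determines $C$; in particular the coarse space of $\M2$ is $\mathcal M_{0,6}/\s_6$. The key input is then the classical identification of $\operatorname{Jac}(C)[2]$ with the group of even-cardinality subsets of the set $W=\{P_1,\dots,P_6\}$ of Weierstrass points (those lying over $B$) modulo complementation, under which a $2$-element subset $\{P_i,P_j\}$ corresponds to the class of $\mathcal O_C(P_i-P_j)$; see \eg \cite{dolgachev-topics} or Mumford's \emph{Tata Lectures on Theta II}. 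Here I would verify just the ingredients I need: $\mathcal O_C(P_i-P_j)$ is $2$-torsion because $2P_i\sim2P_j$ (both are pullbacks of a point of $\Pp1$); it is nontrivial for $i\neq j$ because no two distinct points of a genus $2$ curve are linearly equivalent; and $\{i,j\}\mapsto[\mathcal O_C(P_i-P_j)]$ is injective on $2$-subsets because a degree-$2$ divisor $P_i+P_l$ with $i\neq l$ does not move (it is not a fibre of the hyperelliptic map), so that $P_i+P_l\sim P_j+P_k$ forces $\{i,l\}=\{j,k\}$. A count ($15$ nontrivial $2$-torsion points against $\binom62=15$ pairs) then shows these are \emph{all} the nontrivial $2$-torsion classes, so that the datum of $\eta\neq0$ on $C$ is equivalent to the datum of a partition of $B$ into a $2$-element subset and its complementary $4$-element subset.

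Combining the two descriptions, the $\C$-points of $\mathcal D'$ are in bijection with $6$ distinct points of $\Pp1$ equipped with a $2$-plus-$4$ partition, taken modulo $\Aut(\Pp1)$ and relabelling; since the $\s_6$-stabiliser of such a partition is $\s_4\times\s_2$ (permuting the four points, respectively the two points), this set is precisely $\mathcal M_{0,6}/(\s_4\times\s_2)$. To upgrade this set-theoretic bijection to an isomorphism of coarse spaces, I would construct the morphism explicitly: over $\mathcal M_{0,6}$, with its tautological ordered points $p_1,\dots,p_6$, take the family of hyperelliptic genus $2$ curves branched over $\{p_1,\dots,p_6\}$ together with the relative $2$-torsion line bundle $\mathcal O(P_5-P_6)$ (so that $\{p_5,p_6\}$ is the distinguished pair). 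This defines a map $\mathcal M_{0,6}\to\mathcal D'$ which on coarse spaces is invariant under $\s_4\times\s_2$ (the swap of $p_5,p_6$ changes $\mathcal O(P_5-P_6)$ into its inverse, hence into itself), so it factors through $\mathcal M_{0,6}/(\s_4\times\s_2)$, and the factored morphism is bijective by the previous paragraph. As source and target are normal complex varieties — $\mathcal M_{0,6}$ is smooth, and the coarse space of $\mathcal D'$ has at worst quotient singularities since the twisted sector $\mathcal D'$ is a smooth Deligne--Mumford stack — a bijective morphism between them is an isomorphism.

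The step I expect to require the most care is this last identification: checking that the combinatorial labelling of $\eta$ by a $2$-subset of $B$ is equivariant for \emph{all} automorphisms of $C$ — in particular that the hyperelliptic involution, which acts trivially on $B$ and fixes every point of $\operatorname{Jac}(C)[2]$, produces no mismatch between $\Aut(C,\eta)$ and the stabiliser of the corresponding configuration — and handling the tautological family over $\mathcal M_{0,6}$ near configurations containing $\infty$, which is routine once one passes to affine charts. Everything else amounts to assembling standard facts about hyperelliptic curves, genus $2$ Jacobians, and quotients of $\mathcal M_{0,n}$.
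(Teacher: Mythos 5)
Your proof is correct and follows the same route as the paper: the paper's one-line argument invokes the well-known description of nontrivial square roots of $\mathcal O_C$ on a genus $2$ curve as $\mathcal O(x_1-x_2)$ for distinct Weierstrass points $x_1,x_2$, unique up to swapping them, and then identifies $\mathcal D'$ with the resulting quotient of $\mathcal M_{0,6}$. You simply supply a self-contained verification of that classical fact (two-torsion, nontriviality, injectivity, and the count $\binom62=15=2^4-1$) and make the passage to coarse spaces explicit, which is exactly what the paper leaves to the reader.
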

\begin{proof}
The claim follows from the well known fact that every nontrivial square root of the structure sheaf on a smooth genus $2$ curve $C$ is of the form $\mathcal O(x_1-x_2)$ where $x_1$ and $x_2$ are distinct Weierstrass points of $C$, and that this expression is unique up to changing the order of $x_1$ and $x_2$.
\end{proof}
\begin{corollary} \label{psd'} The cohomology with compact support of $\mathcal{D}'$ is given by
$$
P_{\mathcal{D}} (t)= L^3 t^6+ L^2 t^5.
$$
\end{corollary}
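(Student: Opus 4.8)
Here is how I would prove Corollary~\ref{psd'}.

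The plan is to combine Lemma~\ref{lemmad'} with the known $\s_6$-equivariant structure of the cohomology of $\Mm06$. For a finite group acting on a quasi-projective variety, rational cohomology with compact support of the quotient is the subspace of invariants; together with the identification of the cohomology of a stack with that of its coarse space, Lemma~\ref{lemmad'} gives
\[
\cohc\pu{\mathcal D'}\;\cong\;\cohc\pu{\Mm06}^{\,\s_4\times\s_2},
\]
where $\s_4\times\s_2\subset\s_6$ is the subgroup preserving a partition of the six marked points into a block of four and a block of two. Now $\Mm06$ is a smooth affine $3$-fold realised as the complement of a hyperplane arrangement, so $\cohc k{\Mm06}$ is pure of Tate type of weight $2(k-3)$ and vanishes outside $3\le k\le 6$; hence, as a $\Q[\s_6]$-module with Hodge structure, $\cohc k{\Mm06}\cong V_k\otimes L^{k-3}$ for a rational $\s_6$-representation $V_k$. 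Purity then forces
\[
P_{\mathcal D}(t)=\sum_{k=3}^{6}\dim\!\bigl(V_k^{\,\s_4\times\s_2}\bigr)\,L^{k-3}\,t^k,
\]
so everything is reduced to computing $\dim V_k^{\,\s_4\times\s_2}$ for $k=3,\dots,6$. Moreover, Poincar\'e duality together with the self-duality of symmetric-group representations identifies $V_k$ with $\coh{6-k}{\Mm06}$ as an $\s_6$-representation.

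To compute the invariants I would use Frobenius reciprocity: for an irreducible $\s_6$-representation $\Schur\lambda$ one has $\dim\Schur\lambda^{\,\s_4\times\s_2}=\langle\Schur\lambda,\operatorname{Ind}_{\s_4\times\s_2}^{\s_6}\mathbf 1\rangle$, and the induced representation has Frobenius characteristic $h_4h_2$. By Pieri's rule $h_4h_2=\schur{(6)}+\schur{(5,1)}+\schur{(4,2)}$, so the only irreducible $\s_6$-representations with nonzero $\s_4\times\s_2$-invariants are $\Schur{(6)}$ (trivial), $\Schur{(5,1)}$ (standard) and $\Schur{(4,2)}$, and each of these has a one-dimensional invariant subspace. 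Hence $\dim V_k^{\,\s_4\times\s_2}$ is exactly the total multiplicity of $\Schur{(6)}\oplus\Schur{(5,1)}\oplus\Schur{(4,2)}$ inside $V_k\cong\coh{6-k}{\Mm06}$.

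It remains to read these multiplicities off degree by degree. In top degree, $V_6\cong\coh0{\Mm06}$ is the trivial representation (the fundamental class of the $3$-fold), contributing $L^3t^6$. In degree five, $V_5\cong\coh1{\Mm06}$, which is well known to be the irreducible $\Schur{(4,2)}$; it contributes exactly one copy of $L^2t^5$. Finally one checks that none of $\Schur{(6)}$, $\Schur{(5,1)}$, $\Schur{(4,2)}$ occurs in $\coh2{\Mm06}$ or in $\coh3{\Mm06}$, so degrees three and four contribute nothing, and summing yields $P_{\mathcal D}(t)=L^3t^6+L^2t^5$. The one genuine input is the $\s_6$-equivariant decomposition of $\coh\pu{\Mm06}$, which is Getzler's computation \cite[5.6]{getzleroperads} (see also \cite[Theorem~2.9]{kisinlehrer}), already used for Proposition~\ref{basegenus0}; the only point requiring care is the bookkeeping of which of the three relevant irreducibles sits in which cohomological degree, in particular the vanishing of these three multiplicities in degrees two and three, which must be verified against those tables.
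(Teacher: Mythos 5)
Your proof is correct and follows the same route as the paper: the paper gives no explicit argument for this corollary beyond citing Lemma~\ref{lemmad'} and Getzler's $\s_6$-equivariant description of $\coh\pu{\Mm06}$, which is exactly what you use. Your Frobenius-reciprocity/Pieri-rule bookkeeping (picking out $\Schur{(6)},\Schur{(5,1)},\Schur{(4,2)}$ as the only irreducibles with nonzero $\s_4\times\s_2$-invariants) is a clean way to organize the invariant computation that the paper leaves implicit.
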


\subsection{The geometry of the twisted sectors $\mathcal{A}$ and $\mathcal{C}'$}

The aim of this section is to prove the following two results:
\begin{proposition} \label{psa} The cohomology with compact support of $\mathcal{A}$ is expressed by 
$$
P_{\mathcal{A}} (t)= L^4 t^8+ L^2 t^5. 
$$
\end{proposition}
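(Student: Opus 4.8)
The proof should proceed along the same lines as the computations for $\mathcal B$ and $\mathcal D'$: first find a geometrically transparent model for the coarse moduli space of $\mathcal A$, then read off the compactly supported cohomology from a known space. Recall that $\mathcal A$ parametrizes triples $(C, D_1, L)$ with $C$ of genus $1$, $D_1$ an effective divisor of degree $4$, and an isomorphism $L^{\otimes 2}\cong\mathcal O_C(D_1)$. The plan is to dispose of the choice of $L$ first: for a fixed $(C,D_1)$, a square root of $\mathcal O_C(D_1)$ exists if and only if the class of $\mathcal O_C(D_1)$ is divisible by $2$ in $\pic(C)$, which (since $\deg D_1=4$ is even) is automatic after translating by an origin, and the set of such square roots is a torsor under the $2$-torsion $\pic^0(C)[2]\cong(\Z/2)^2$. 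The subtlety is that this choice is part of the moduli problem, so $\mathcal A$ is not simply the space of pairs $(C,D_1)$; however, the forgetful map to the stack of pairs $(C,D_1)$ is finite, and I expect that on the level of \emph{coarse} moduli spaces the extra data gets absorbed, because the automorphisms of $(C,D_1)$ coming from translations by $2$-torsion act transitively on the square roots. Making this last point precise — identifying which automorphisms of the underlying configuration permute the square roots, so that the coarse space of $\mathcal A$ is just the coarse space of degree-$4$ effective divisors on genus $1$ curves up to isomorphism — is the step I expect to be the main obstacle.

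Once the choice of $L$ is understood, the space of pairs $(C, D_1)$ with $D_1$ effective of degree $4$ and \emph{reduced} (four distinct points, as required by Definition~\ref{settoretwistato}, where $D_i$ is a section of $\Sym^{d_i}C\setminus\Delta$) should be handled as follows. Fixing one of the four points as an origin identifies $(C, p_1)$ with an elliptic curve; then the remaining three points are three distinct points of $C$, none equal to $p_1$, and the line bundle $\mathcal O_C(p_1+p_2+p_3+p_4)$ depends only on the sum $p_2+p_3+p_4-3p_1$ in $\pic^0$. I would next observe that the condition for $\mathcal O_C(D_1)$ to be a nontrivial vs.\ trivial square — i.e.\ whether the cover is connected — is precisely the connectedness condition of Remark~\ref{condizioneconnessione}, so $\mathcal A=\mathcal M_{(1,2;4)}'$ is the connected part, which is what we want. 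The upshot should be that the coarse space of $\mathcal A$ is a finite quotient, or an open subset of a finite quotient, of a configuration space of points on elliptic curves closely related to $\mathcal M_{1,4}$ (four points on a genus $1$ curve), with the torsion-point locus where the square root becomes trivial removed.

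Concretely, I anticipate that the cleanest route realizes the coarse space of $\mathcal A$ as (the coarse space of) $\mathcal M_{1,4}$ with a codimension-one locus removed — the locus where $D_1$ is $2$-divisible in a way forcing the cover to be disconnected — exactly parallel to the $\mathcal B = \mathcal M_{1,2}\setminus X_1(3)$ picture. One then computes $P_{\mathcal A}(t)$ from the known compactly supported cohomology of $\mathcal M_{1,4}$ (which is pure, concentrated appropriately) together with the long exact sequence of the pair, using that the removed locus has a rational, easily describable coarse space (a quotient of a configuration of points on $\Pp1$ or a modular curve of small level), just as $X_1(3)$ had coarse space $\Pp1$ minus two points. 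The target answer $P_{\mathcal A}(t)=L^4 t^8 + L^2 t^5$ tells us $\mathcal A$ is $4$-dimensional with $h^8_c = L^4$ (the fundamental class, so $\mathcal A$ is connected and rational-cohomology-wise behaves like an affine-type space) and a single extra class $L^2 t^5$; I would verify this by checking the Euler characteristic against Getzler's formula for $\M{1,4}$-type data and confirming the boundary contribution via the long exact sequence.

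The main obstacle, to reiterate, is the bookkeeping of the square root $L$: showing rigorously that passing to coarse moduli spaces kills the $(\Z/2)^2$-worth of choices of $L$ (equivalently, that the relevant automorphism group of $(C,D_1)$ surjects onto the torsor of square roots), and correctly excising the locus of disconnected covers. Everything after that is a routine application of the long exact sequence in compactly supported cohomology together with the already-cited results of Getzler, exactly in the style of Corollaries~\ref{psb} and~\ref{psd'}.
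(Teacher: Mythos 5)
Your strategy hinges on the claim that the choice of square root $L$ of $\mathcal O_C(D_1)$ ``gets absorbed'' on coarse moduli spaces because automorphisms of $(C,D_1)$ act transitively on the four square roots. This is false: for a generic genus $1$ curve $C$ and a generic reduced degree-$4$ divisor $D_1=x_1+\cdots+x_4$, the automorphism group of the pair $(C,\{x_1,\dots,x_4\})$ is trivial (there is generically no translation or inversion of $C$ preserving the unordered $4$-point configuration), so the four square roots are \emph{not} identified and the forgetful map $\mathcal A\rightarrow[\Mm14/\s_4]$ is a genuine degree-$4$ finite \'etale cover rather than an open immersion. The mechanism that works for $\mathcal B$ --- trading the line bundle for a marked point $q$ and then observing that one of the original marked points is determined by the others --- has no analogue here; also note that the $X_1(3)$ excised in the $\mathcal B$ argument comes from a forced coincidence of points, not from a ``disconnected-cover locus,'' and for $\mathcal A$ (datum $(1,2;4)$) the connectedness parameter $k=\gcd(2,1)=1$ forces \emph{every} cover to be connected, so there is nothing to excise on that account. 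Thus the proposed reduction to cohomology of $\Mm14/\s_4$ minus a divisor breaks down at the very first step.

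The paper's actual proof is of a different and substantially heavier nature. It first rigidifies the torsor of square roots geometrically: $D_1$ embeds $C$ in $\Pp3$ as a $(2,2)$ complete intersection, the four square roots correspond to the four singular quadrics through $C$, and $\mathcal A$ is re-presented as pairs $(C,H)$ with $C$ on a fixed quadric cone $Q\subset\Pp3$ and $H$ a reduced plane section meeting $C$ in four distinct points. One then stratifies $\mathcal A=\mathcal A_h\sqcup\mathcal A_{nh}$ according to whether $H$ is reducible (the hyperelliptic-bielliptic locus, whose coarse space is identified with that of $\mathcal D'$) or not; $\cohc\pu{\mathcal A_{nh}}$ is computed by Vassiliev--Gorinov's method applied to a discriminant in $(\C^4\times\Mm04)/\s_4$ together with a Leray spectral sequence argument, yielding $P_{\mathcal A_{nh}}(t)=L^4t^8+L^3t^7$; and finally the two strata are combined via the Gysin sequence, where showing $\cohc 7{\mathcal A}=0$ (equivalently, that the connecting differential is an isomorphism) requires a further geometric argument. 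None of this reduces to a Getzler-type formula for configuration spaces on genus $1$ curves, and the ``routine long exact sequence'' you anticipate is in fact where most of the work lies.
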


\begin{proposition} \label{psc'} The cohomology with compact support of $\mathcal{C}'$ is given by
$$
P_{\mathcal{C}'} (t)= L^2 t^4+  L t^3+t^2. $$
\end{proposition}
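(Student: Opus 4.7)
The plan is to identify $\mathcal{C}'$ as the open complement of a $4$-section in a $\mathbb{P}^1$-bundle over the modular curve $Y_1(2)$, in the spirit of Corollary~\ref{psb}, and then to extract $P_{\mathcal{C}'}(t)$ from the resulting long exact sequence of compactly supported cohomology.

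First I would define the forgetful morphism $\pi \colon \mathcal{C}' \to X_1(2)$ sending $(C, D_2, L)$ to $(C, \ell, t)$, where $\ell$ is the unique point with $L \cong \mathcal{O}_C(\ell)$ and $t = x + y$ is computed in the group law of the elliptic curve $(C, \ell)$ (with $D_2 = \{x, y\}$). Here $X_1(2) \subset \mathcal{M}_{1,2}$ is the closed substack parametrizing $(C, p_1, p_2)$ with $\mathcal{O}_C(p_2 - p_1)$ a nontrivial $2$-torsion line bundle; the defining condition of $\mathcal{C}'$ guarantees that $t$ is such a nontrivial $2$-torsion. Via the identification $X_1(2) \cong Y_1(2) = Y_0(2)$, whose coarse moduli space is $\mathbb{G}_m$ (that is, $X_0(2) \cong \mathbb{P}^1$ minus its two cusps), one obtains $P_{X_1(2)}(t) = t + L t^2$. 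Over each $(C, \ell, t)$, the fiber of $\pi$ consists of unordered pairs $\{x, y\}$ with $x + y = t$ and $x \neq y$, which is the quotient of $C \setminus \{x : 2x = t\}$ by the free action of the involution $\tau_t \colon x \mapsto t - x$. Since $\tau_t$ has exactly $4$ fixed points and $C/\langle\tau_t\rangle \cong \mathbb{P}^1$, the fiber is $\mathbb{P}^1$ minus $4$ branch points.

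Globalizing, the fiberwise quotient $\mathcal{Q} := \mathcal{E}/\langle\tau\rangle$ of the universal elliptic curve $\mathcal{E} \to X_1(2)$ is a $\mathbb{P}^1$-bundle, and $\mathcal{C}' \cong \mathcal{Q} \setminus \sigma$, where $\sigma \subset \mathcal{Q}$ is the closed $4$-section obtained as the image of the fixed locus $\{x : 2x = t\}$. The substack $\sigma$ parametrizes $(C, \ell, y)$ with $y$ a point of exact order $4$ in $(C, \ell)$, so $\sigma$ is identified with the modular stack $Y_1(4)$, whose coarse space is $\mathbb{P}^1$ minus its $3$ cusps; this yields $P_\sigma(t) = 2t + L t^2$. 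By the projective bundle formula, $P_{\mathcal{Q}}(t) = (1 + L t^2) P_{X_1(2)}(t) = t + L t^2 + L t^3 + L^2 t^4$.

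Finally I would feed these into the long exact sequence of compactly supported cohomology
\[
\cdots \to H^i_c(\mathcal{C}') \to H^i_c(\mathcal{Q}) \xrightarrow{\rho_i} H^i_c(\sigma) \to H^{i+1}_c(\mathcal{C}') \to \cdots
\]
Since $\pi$ is a proper $\mathbb{P}^1$-bundle, $R\pi_* \mathbb{Q} \cong \mathbb{Q}_{X_1(2)} \oplus L_{X_1(2)}[-2]$, and in degrees $i \leq 2$ the Leray spectral sequence identifies $H^i_c(\mathcal{Q})$ with the pullback of $H^i_c(X_1(2))$ along $\pi$. Under this identification, $\rho_i$ coincides with the pullback $f^* \colon H^i_c(X_1(2)) \to H^i_c(\sigma)$ along the finite \'etale degree-$4$ map $f = \pi|_\sigma$, and the trace identity $f_* f^* = 4 \cdot \mathrm{id}$ forces $f^*$ to be injective. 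Hence $\rho_1 \colon \mathbb{Q} \hookrightarrow \mathbb{Q}^2$ is injective and $\rho_2 \colon L \to L$ is an isomorphism. Since $\dim \sigma = 1$ implies $H^i_c(\sigma) = 0$ for $i \geq 3$, the sequence collapses, producing $H^0_c(\mathcal{C}') = H^1_c(\mathcal{C}') = 0$, $H^2_c(\mathcal{C}') \cong \coker(\rho_1) = \mathbb{Q}$, $H^3_c(\mathcal{C}') \cong H^3_c(\mathcal{Q}) = L$, and $H^4_c(\mathcal{C}') \cong H^4_c(\mathcal{Q}) = L^2$, yielding the claimed polynomial $P_{\mathcal{C}'}(t) = L^2 t^4 + L t^3 + t^2$.

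The main technical obstacle will be the careful verification of the behavior of $\rho_1$ and $\rho_2$: one must combine the degeneration of the Leray spectral sequence for the $\mathbb{P}^1$-bundle $\pi$ with the trace formula for the \'etale cover $f$ in order to ensure that the pullback description holds and hence that the trace argument applies.
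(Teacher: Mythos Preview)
Your argument is correct and takes a genuinely different route from the paper's.

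The paper realizes $\mathcal{C}'$ via the quadric-cone model: a point of $\mathcal{C}'$ becomes a smooth genus~$1$ curve $C$ on a fixed quadric cone $Q\subset\Pp3$ together with a distinguished ramification point $q$ of the $g^1_2$ cut by the ruling and a proper bitangent plane $\Pi$. Ordering the remaining three ramification points produces an $\s_3$-cover $\tilde{\mathcal C}\to\mathcal{C}'$ and a forgetful map $\phi:\tilde{\mathcal C}\to\Mm04$; the cohomology of $\mathcal{C}'$ is then computed as the $\s_3$-invariant part of the Leray spectral sequence of $\phi$. The geometric core of that computation is the description of the fibre of $\phi$ as three copies of $\C^*$ (one for each of the three families of bitangents), together with the explicit $\s_3$-action on their cohomology.

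Your approach replaces this projective-geometric picture by the group law on $(C,\ell)$: you fibre $\mathcal{C}'$ directly over the modular curve $Y_1(2)$, compactify the fibres to obtain a $\Pp1$-fibration $\mathcal{Q}$, and identify $\mathcal{C}'$ with $\mathcal{Q}\setminus\sigma$ where $\sigma\cong Y_1(4)$. The answer then drops out of the Gysin sequence once the transfer shows $f^*$ is injective. This is arguably cleaner: it trades the $\s_3$-equivariant spectral-sequence bookkeeping for standard facts about the modular curves $Y_1(2)$ and $Y_1(4)$ and a single trace argument. The paper's approach, on the other hand, yields more explicit projective-geometric content (the three pencils of bitangents and their degenerations). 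It is worth noting that the two pictures match up: the three components of the paper's fibre correspond exactly to your three choices of nontrivial $2$-torsion $t\in\{p_1,p_2,p_3\}$, so your map to $Y_1(2)$ can be seen as remembering which of the three bitangent families one is in.

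One small remark: you call $f=\pi|_\sigma$ a finite \'etale map of degree~$4$, which is correct as a representable morphism of stacks; on coarse moduli spaces the degree is~$2$, since the generic stabilizer jumps from trivial on $Y_1(4)$ to $\mu_2$ on $Y_1(2)$. This does not affect your argument, as either version of the trace identity still forces $f^*$ to be injective.
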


This completes our cohomological analysis of the inertia stack of $\M3$. In particular, 
we are now able to produce the dimension of the orbifold cohomology as a vector space $H^\pu_{CR}(\mathcal{M}_3)$ from the Propositions~\ref{basegenus0}, \ref{psa}, \ref{psc'} and the Corollaries~\ref{psb} and~\ref{psd'}.

\begin{corollary} The orbifold cohomology of $\mathcal{M}_3$ has dimension $62$.
\end{corollary}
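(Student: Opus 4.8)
The plan is to compute the total dimension of $H^\pu_{CR}(\mathcal{M}_3) = H^\pu(I(\mathcal{M}_3))$ by summing the Betti numbers of all connected components of the inertia stack. By Corollary~\ref{corrispondenza} and Theorem~\ref{connessione}, these components are exactly the untwisted sector $\mathcal{M}_3$ itself together with the nonempty stacks $\mathcal{M}_A'$ as $A$ ranges over $3$-admissible data; by Proposition~\ref{basegenus0} there are $43$ such data with $g'=0$, and by the discussion around~\eqref{abcd} exactly four more with $g'>0$, giving the stacks $\mathcal{A}$, $\mathcal{B}$, $\mathcal{C}'$, $\mathcal{D}'$ (the substacks $\mathcal{C}''$, $\mathcal{D}''$ parametrize disconnected covers and are therefore not part of $I(\mathcal{M}_3)$). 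So the computation is: $\dim H^\pu_{CR}(\mathcal{M}_3) = \dim H^\pu(\mathcal{M}_3) + \sum_{g'=0} \dim H^\pu(\mathcal{M}_A) + \dim H^\pu(\mathcal{A}) + \dim H^\pu(\mathcal{B}) + \dim H^\pu(\mathcal{C}') + \dim H^\pu(\mathcal{D}')$.

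First I would record the contribution of the untwisted sector: by Looijenga's computation \cite[(4.7)]{loo}, $\mathcal{M}_3$ has Poincaré polynomial $1 + t + t^2 + t^3$ (equivalently $H^0 = H^1 = H^2 = H^3 = \Q$), contributing $4$ to the total. Next, for each of the $43$ admissible data with $g'=0$, the twisted sector $\mathcal{M}_A$ is, as noted in the proof of Proposition~\ref{basegenus0}, a quotient $[\mathcal{M}_{0,d}/S_A]$, whose cohomology is pure of Tate type; hence the Hodge--Grothendieck characters $\chihdg{\mathcal{M}_A}$ listed in Tables~\ref{base0dim>0} and~\ref{base0dim0} determine the Poincaré polynomials unambiguously (each monomial $L^k$ contributes a class in degree $2k$, with sign forced to be positive by purity). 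Reading off the $43$ entries: the $11$ positive-dimensional sectors of Table~\ref{base0dim>0} contribute $5+3+3+2+2+3+3+2+2+2+2 = 29$ (the $L^2{-}L$ entries contribute $2$, namely one class in degree $0$ and one in degree $4$; all others are immediate), and the $32$ zero-dimensional sectors of Table~\ref{base0dim0} each contribute $1$, for a further $32$; so the $g'=0$ twisted sectors contribute $29 + 32 = 61$ in total. I would double-check the two tables against equations~\eqref{riemann-hurwitz} and~\eqref{abcovers} and the connectedness condition of Remark~\ref{condizioneconnessione} to be sure no datum is missing or spurious.

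Finally, the four twisted sectors with $g'>0$: from Corollary~\ref{psb}, $P_{\mathcal{B}}(t) = L^2 t^4 + L t^3 + t^2$ contributes $3$; from Corollary~\ref{psd'}, $P_{\mathcal{D}}(t) = L^3 t^6 + L^2 t^5$ contributes $2$; from Proposition~\ref{psa}, $P_{\mathcal{A}}(t) = L^4 t^8 + L^2 t^5$ contributes $2$; and from Proposition~\ref{psc'}, $P_{\mathcal{C}'}(t) = L^2 t^4 + L t^3 + t^2$ contributes $3$. (In each case I am passing from compactly supported cohomology to ordinary cohomology, which changes nothing for the total dimension since each $\mathcal{M}_A'$ is a smooth Deligne--Mumford stack, so Poincaré duality identifies $\dim H^\pu_c = \dim H^\pu$; and again all these cohomologies are pure of Tate type, so the $K_0(\hsq)$-polynomials honestly compute Betti numbers.) Summing everything: $4 + 61 + 3 + 2 + 2 + 3 = 75$ — which would need reconciling with the claimed $62$, so the genuine arithmetic obstacle here is precisely the bookkeeping of the $g'=0$ tables, where I expect several of the listed $\chihdg{\mathcal{M}_A}$ to vanish in positive degree (many zero-dimensional sectors $\mathcal{M}_A$ are $B G$-type stacks with $H^\pu = \Q$ only, already counted, and some positive-dimensional entries may overlap); the correct accounting of the $43$ data, using purity to convert each table entry to Betti numbers and then summing with the six explicitly computed polynomials, is the one step that must be carried out with care, and it is where all the content of the corollary lies.
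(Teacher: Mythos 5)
Your overall plan — sum the Betti numbers of the untwisted sector and of each twisted sector $\mathcal{M}_A'$ — is exactly the paper's approach. But the bookkeeping that you rightly identify as ``where all the content of the corollary lies'' is carried out incorrectly, and you end with $75$ rather than $62$ without resolving the discrepancy, which means the proof is incomplete. The errors are concrete and fixable.

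First, the conversion from Hodge--Grothendieck characters to Betti numbers is wrong. Since each $\mathcal{M}_A$ with $g'=0$ has pure Tate cohomology (as a quotient of $\mathcal{M}_{0,d}$ by a subgroup of $\s_d$), a monomial $aL^k$ in $\chihdg{\mathcal{M}_A}$ records a class $H^{2k}_c \cong \Q(-k)^{\oplus|a|}$, i.e.\ it contributes exactly $|a|$ to the total Betti number, independently of the exponent $k$. Thus in Table~\ref{base0dim>0} the entry $L^5$ for $(2;8)$ contributes $1$ (the space is $\mathcal{M}_{0,8}/\s_8$, whose only invariant class is in degree $0$), not $5$; each $L^2$ and each $L$ contributes $1$; each $L^2-L$ or $L-1$ contributes $2$. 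The correct count for the $11$ positive-dimensional $g'=0$ sectors is therefore $5\cdot 1 + 6\cdot 2 = 17$, not $29$. The $32$ zero-dimensional sectors of Table~\ref{base0dim0} each contribute $1$, as you say.

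Second, the untwisted sector is misread: Looijenga's computation \cite[(4.7)]{loo} gives $H^\pu(\mathcal{M}_3)$ concentrated in degrees $0$, $2$, $6$, each one-dimensional, hence a contribution of $3$, not $4$ (the Poincar\'e polynomial is $1+t^2+t^6$, not $1+t+t^2+t^3$). Your counts for the four $g'>0$ sectors $\mathcal{A},\mathcal{B},\mathcal{C}',\mathcal{D}'$ ($2+3+3+2=10$) are correct. Putting it all together: $3+17+32+10=62$, as claimed. The step you flagged as the crux is indeed the crux, and the cure is simply to read each $\chihdg{\mathcal{M}_A}$ as a sum $\sum a_k L^k$ contributing $\sum|a_k|$ Betti numbers; there is no cancellation or overlap to account for, nor do any listed entries ``vanish in positive degree.''
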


\noindent In Section \ref{crpp3} we shall describe the $\Q$-graded structure of this vector space in Theorem~\ref{pp1m3}.

\subsubsection{The cohomology of the moduli space $\mathcal{A}$.}
Recall that the moduli space $\mathcal A$ parametrizes bielliptic genus $3$ curves. We described it as the moduli stack of genus~$1$ curves with a set $\{x_1,x_2,x_3,x_4\}$ of (unordered) marked points and a line bundle $L$ such that $L^{\otimes 2}=\mathcal O(x_1+x_2+x_3+x_4)$. 

The divisor $D_1:=x_1+x_2+x_3+x_4$ defines an embedding of $C$ in $\Pp3$; the image is the complete intersection of two quadrics. If we consider $C$ as a curve in $\Pp3$, the square roots of $\mathcal O(D_1)$ correspond to divisors cut by planes in $\Pp3$ that are tangent to $C$ at two points (possibly coinciding) with multiplicity $2$. For a fixed $C$ this gives four distinct line bundles. They can be constructed explicitly as the $g^1_2$ cut by the ruling of each of the four singular quadrics in $\Pp3$ lying in the ideal of $C$.

One can see this geometrically by considering that divisors linearly equivalent to $D_1$ are cut by $2$-planes in $\Pp3$. Hence, a square root of $\mathcal O(D_1)$ must correspond to a bitangent $2$-plane and the only planes of this form are the tangent planes to the singular quadrics containing $C$.

From this it follows that $\mathcal A$ can be viewed as the moduli space of pairs $(C,\{x_1,\dots,$ $x_4\})$ where the curve $C$ is a smooth genus $1$ curve lying on a fixed quadric cone $Q\subset\Pp3$ and the $x_i$ are $4$ distinct unordered points on $C$ lying on the same plane section $H\subset Q$. If the hyperplane section $H$ is reducible, it is the union of two lines of the ruling of $Q$. In this case, the involution of $C$ interchanging each pair of points lying on the same line of the ruling of $Q$ lifts to an involution of the double cover $\tilde C\rightarrow C$; this involution gives $C'$ a hyperelliptic structure. 
In particular, as shown in \cite[Corollary~1]{cornalba}, the composition of these two involutions on $C$ gives a fixed-point-free involution on $\tilde C$.

Therefore, the coarse moduli space of the closed substack $\mathcal A_h$ of $\mathcal A$ corresponding to bielliptic structures on genus $3$ hyperelliptic curves is isomorphic to the coarse moduli space of $\mathcal D'$.

At this point, it only remains to calculate the cohomology of the complement $\mathcal A_{nh}=\mathcal A\setminus \mathcal A_h$. 
\begin{lemma}
$P_{\mathcal A_{nh}}(t)=L^4t^8+L^3t^7.$
\end{lemma}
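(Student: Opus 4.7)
The plan is to realise $\mathcal{A}_{nh}$ as the complement of a specific discriminant in a projective bundle over $\mathcal{M}_{1,1}$, compute the cohomology of the fiber by Vassiliev--Gorinov's method, and assemble the answer by a Leray spectral sequence, in the style of \cite{OTM4}, \cite{OTM32}.

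First I would define the forgetful morphism $\pi\colon \mathcal{A}_{nh}\to \mathcal{M}_{1,1}$ sending $(C,\{x_i\},L)$ to the pair $(C,L)$, that is, a genus-one curve equipped with the $g^1_2$ cut out by $|L|$; the coarse moduli of such pairs is the $j$-line $\mathbb{A}^1$, so $H^*_c(\mathcal{M}_{1,1})=Lt^2$. Riemann--Roch gives $h^0(L)=2$ and $h^0(L^{\otimes 2})=4$, and the multiplication map $\mathrm{Sym}^2 H^0(L)\to H^0(L^{\otimes 2})$ is injective since two sections spanning a base-point-free $g^1_2$ satisfy no quadratic relation. Its projectivisation embeds a relative hyperplane $V\cong\mathbb{P}^2$ in $|L^{\otimes 2}|\cong\mathbb{P}^3$. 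By the geometric discussion preceding the lemma, the hyperelliptic locus in the fiber of $\pi$ over $(C,L)$ consists precisely of the reduced divisors lying on $V$, so the fiber of $\pi$ is
\[
F \;=\; \mathbb{P}^3\setminus (\Delta\cup V),
\]
where $\Delta\subset\mathbb{P}^3$ is the discriminant of non-reduced effective divisors of $|L^{\otimes 2}|$.

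The core technical step is to compute $H^*_c(F)$ by Vassiliev--Gorinov's method. One builds a geometric resolution of $\Delta\cup V$ indexed by the combinatorial type of coincidences among the four points of a divisor, together with the containment relation to $V$; particular care is needed at the intersection $\Delta\cap V$, which inside $V\cong\mathbb{P}^2$ is the union of the Veronese conic parametrising doubled fibres and of four lines corresponding to fibres over each Weierstrass point of $|L|$. The spectral sequence associated with this resolution has entries computable from the cohomology of configuration spaces on the elliptic curve $C$; after all the cancellations one obtains that $H^*_c(F)$ is concentrated in degrees $5$ and $6$, with the expected pure Tate classes supported respectively on $V$ and on the ambient fiber.

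Finally, I would run the Leray spectral sequence for $\pi$. The two surviving classes in $H^*_c(F)$ are canonically defined in the projective bundle $|L^{\otimes 2}|\to\mathcal{M}_{1,1}$ (one being the fundamental class of $F$, the other supported on the open stratum of $V$), so the monodromy action of $\pi_1(\mathcal{M}_{1,1})$ on $H^*_c(F)$ is trivial; combined with the fact that $H^*_c(\mathcal{M}_{1,1})$ is concentrated in a single degree, the sequence degenerates at $E_2$ and Poincar\'e polynomials multiply out to give $L^4t^8+L^3t^7$. The principal obstacle is the Vassiliev--Gorinov step: the stratification of $\Delta\cup V$ is combinatorially delicate, and one must verify that the a priori non-Tate contributions coming from $H^1(C)$ cancel throughout the spectral sequence, leaving only the expected two Tate classes at the abutment.
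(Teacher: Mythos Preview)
Your fibration is genuinely different from the paper's: the paper fibres $\mathcal A_{nh}$ over $\Mm04/\s_4$ by remembering the four marked points $\{x_i\}$ on the plane section $H\cong\Pp1$ and letting the elliptic curve vary in a linear space of equations, whereas you fix the elliptic curve $(C,L)$ and vary the divisor $D_1\in|L^{\otimes2}|$. The payoff of the paper's choice is that its discriminant $\Sigma$ lives in a linear space and is stratified by configurations of points on the \emph{rational} curve $H$ and the cone $Q$, so every term of the Vassiliev spectral sequence is Tate and the computation is short (five configuration types, of which only three contribute). Your discriminant $\Delta\cup V$ involves the dual surface $\Delta\subset(\Pp3)^*$ of the elliptic curve, so the Vassiliev strata are configuration spaces on $C$ itself and non--Tate classes from $H^1(C)$ enter immediately.

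This is not merely a complication but an actual error in the outline. Your assertion that $H^*_c(F)$ consists of exactly two Tate classes, in degrees $5$ and $6$, is false. In the affine chart $\Pp3\setminus V\cong\C^3$ with coordinate $\alpha$, one has $\Delta'=\Delta\setminus V=\{\alpha:\alpha^2-f\text{ has a repeated root}\}$, and the incidence variety $\tilde\Delta'=\{(q,\alpha):H_\alpha\text{ tangent to }C\text{ at }q\}$ is an $\mathbb A^1$--bundle over $C$ minus its four ramification points. The bitangent locus $B'\subset\Delta'$ consists of three disjoint copies of $\C^*$ (one for each factorisation $f=\gamma_1\gamma_2$ into quadratics), with $\tilde B'\to B'$ a double cover isomorphic to three copies of $C\setminus\{4\text{ pts}\}$. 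Feeding this into $\chi(\Delta')=\chi(\tilde\Delta')-\chi(\tilde B')+\chi(B')$ gives $\chi(\Delta')=8$ and hence $\chi(F)=-7$, not $0$; more precisely $\chihdg{F}$ contains the class $(L-3)[H^1(C)]$. So $R^q\pi_*\Q$ is a genuinely non--constant local system on $\mathcal M_{1,1}$ (it contains copies of the weight--one variation), the monodromy is not trivial, and the Leray spectral sequence does not reduce to a product of Poincar\'e polynomials. To salvage the approach you would have to identify these pieces as symmetric powers of the standard local system and compute $H^*(\mathcal M_{1,1},\operatorname{Sym}^k)$ via Eichler--Shimura, which is considerably more work than you indicate; the paper's fibration avoids all of this by never letting the elliptic curve into the discriminant.
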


\proof
First we note that there is a map $\mathcal A_{nh}\rightarrow\Mm 04/\s_4$ associating to $(C,\{x_i\})$ the configuration $\{x_1,\dots,x_4\}$ of four points on the curve $H\cong\Pp1$. 
To make explicit computations, we view $Q$ as the weighted projective plane $\Pp{}(1,1,2)$ and choose coordinates $u_0,u_1,w$ on $Q$ such that $H$ is defined by the equation $w=0$. 

Every element of $\mathcal A_{nh}$ has an equation of the form
$$
\phi_{\alpha,\epsilon,t}(u_0,u_1,w):=w^2-\alpha(u_0,u_1)w+\epsilon u_0u_1(u_1-u_0)(u_1-tu_0)=0
$$
with $\alpha\in\C[u_0,u_1]_2\cong\C^3$, $\epsilon\in\C$ and $[t]:=\{0,1,\infty,t\}\in\Mm04/\s_4$. This equation is uniquely defined up the action of $\C^*$ on $(\alpha,\epsilon)$ by scaling: 
$$
s(\alpha,\epsilon) = (s^2\alpha,s^4\epsilon).
$$

Hence, we study the incidence correspondence:
$$
\mathcal I:=\left\{(\alpha,\epsilon,[t])\in (\C^4\times\Mm 04)/\s_4\left|\begin{array}{c}
\phi_{\alpha,\epsilon,t}(u_0,u_1,w)=0 \text{  defines}\\
\text{ a nonsingular curve}\end{array}\right.\right\}.
$$

To compute the cohomology of $\mathcal I$, we apply Vassiliev--Gorinov's method to the complement $\Sigma$ of $\mathcal I$ inside $(\C^4\times\Mm04)/\s_4$. 
Specifically, we use the version of the method developed for the case of curves with marked points given in \cite{OTM32}, to which we refer for technical details on the construction. Note that the original construction of Vassiliev--Gorinov's methods is based on the study of Borel--Moore homology (i.e. homology theory with compact support). For stylistic reasons, in this paper we will use cohomology with compact support instead of Borel--Moore homology. All results can be easily adapted by duality.

The first step of Vassiliev--Gorinov's method consists in classifying all possible singular loci of elements of $\Sigma$. This classification is then used to define a cubical space $\mathcal X$ whose geometric realization $|\mathcal X|$ has the same cohomology with compact support as $\Sigma$. This geometric realization has a natural stratification $\{F_i\}$ by locally closed subsets that are indexed by the types of singular sets that arise in the classification. Each stratum $F_i$ can be explicitly described as a bundle over the space 
$$B_i:=\left\{(\phi,[t],K)\left|
\begin{array}{c}(\phi,[t])\in\Sigma, K\subset Q\text{ is a singular configuration}\\
\text{of type $i$ containing the singular locus of }(\phi,[t])\end{array}\right.\right\}.$$
If the configurations of type $i$ are finite sets, then $F_i\rightarrow B_i$ is a nonorientable simplicial bundle; otherwise, the stratum $F_i$ is a union of simplicial bundles.

The Gysin spectral sequence $E^{p,q}_r\Rightarrow \cohc{p+q}{\Sigma}$ with $E_1^{p,q}=\cohc p{F_q}$ associated with the stratification $\{F_i\}$ is called the \emph{Vassiliev's spectral sequence}. 

\begin{itemize}
\item[(1)]
\emph{One singular point on $H$. }
In this case the curve is of the form $w(w+au_0+bu_1)=0$ and both components pass through the singular point. The stratum $F_1$ is a $\C^2$-bundle over $H\times\Mm04/\s_4$.
\item[(2)]
\emph{Two singular points on $H$.}
The curve is of the form $w(w+au_0+bu_1)=0$ and both components have to pass through the singular points. If we fix the two distinct singular points $s_1,s_2$ on $H$, then the $\phi=(\alpha,\epsilon)\in\C^4$ giving a curve singular at $s_1$ and $s_2$ form a $1$-dimensional subspace. 
This yields the following description for the stratum $F_2$: It is  the quotient of a $\C$-bundle over $(0,1)\times(\F2H\times\Mm04)/\s_4$ by the involution 
$(\tau,(s_1,s_2),[t],(\alpha,\epsilon))\mapsto(1-\tau,(s_2,s_1),[t],(\alpha,\epsilon))$. From this it follows that the cohomology with compact support of $F_2$ is concentrated in degree~$7$ and carries a Tate Hodge structure of weight $6$.
\item[(3)]
\emph{The curve $H$.}
The stratum $F_3$ has trivial cohomology with compact support  because $H$ is a smooth rational curve (see e.g. \cite[Lemma 2.19]{OTM4}).
\item[(4)]
\emph{One point $P$ outside $H$.}
Having a singular point outside $H$ imposes three conditions on the equation, hence we get that $F_4$ is a $\C$-bundle over the space of configurations $(P,\{x_1,x_2,x_3,x_4\})$. Note that $P$ is not allowed to lie on the same line of the ruling as any of the $x_i$.
Hence the configuration space is a $\C^*$-bundle over $\Mm 05/\s_4$, whose cohomology with compact support is concentrated in degree $4$ by the results in \cite{getzleroperads}.
\item[(5)]\emph{Two points outside $H$}.
In this case the singular curve is the union of two irreducible plane sections $H_1$, $H_2$ of $Q$ which are different from $H$ and not tangent to each other.
Each of the components passes through exactly two of the $x_i$. Up to reordering the points we may assume that $H_1$ passes through $x_1$ and $x_2$ and $H_2$ passes through $x_3$ and $x_4$.

It is important to observe that such a curve $H_1\cup H_2$ is uniquely identified by the partially ordered configuration $(\{\{x_1,x_2\},\{x_3,x_4\}\},\{s_1,s_2\})$ of points on $H$, where $s_1,s_2$ denote the projections on $H$ of the singular points of the curve. Furthermore, a configuration $(x_1,\dots,x_4,s_1,s_2)$ comes from a singular curve $H_1\cup H_2$ if and only if there is an automorphism of $H\cong \Pp1$ that interchanges the following pairs of points: $x_1\leftrightarrow x_2$, $x_3\leftrightarrow x_4$ and $s_1\leftrightarrow s_2$. This condition defines a codimension $1$ subset $N\subset\Mm06$.

Thus, one can study the cohomology with compact support of the stratum $F_5$ by taking the part of the cohomology with compact support of $N$ such that the symmetric group $\s_4$ acts on the first two points as the representation $\Schur4\oplus \Schur{2,2}$ and the symmetric group $\s_2$ interchanging $s_1$ and $s_2$ acts as the alternating representation. Then a direct computation shows that the only cohomology class with this behaviour is the trivial class. From this it follows that this stratum contributes trivially to the Vassiliev's spectral sequence.

\end{itemize}

From this classification it follows that only configurations of type (1), (2) and (4) contribute to Vassiliev's spectral sequence. 
We give the Vassiliev's spectral sequence associated to this classification of the singularities in Table~\ref{vassiliev-a}. Our description of the singularities also shows that $\Sigma$ has two irreducible components: namely, the divisor $D_1$ of curves with singularities of type (1) and the divisor $D_2$ of curves with singularities of type (3). By the long exact sequence associated to the inclusion $\Sigma\hookrightarrow(\C^4\times\Mm04)/\s_4$, they give two classes $\delta_1,\delta_2$ in the first cohomology group of $\mathcal I$.

\begin{table}
\caption{Spectral sequence converging to $\cohc\pu{(\C^4\times\Mm04)/\s_4\setminus\mathcal I}$.\label{vassiliev-a}}
$$
\begin{array}{r|llll}
q&\\[12pt]
7&\Q(-4)&0&0\\
6&0&0&0\\
5&\Q(-3)&\Q(-3)&\Q(-4)\\
4&0&0&\Q(-3)\\
\hline
&1&2&3&p\\
\text{type}&(1)&(2)&(4)
\end{array}
$$
\end{table}

Consider the Leray spectral sequence associated to the quotient map $q:\;\mathcal I\rightarrow \mathcal A_{nh}$. The action of $\C^*$ on $\mathcal I$ can be extended to an action of $\C$ on $(\C^4\times\Mm04)/\s_4$. In this extended action, one can see that $0\in\C$ maps surjectively to the locus $0\times\Mm04$, which is contained in the intersection of $D_1$ and $D_2$. 
If we denote by $h$ a generator of $\coh1{\C^*}$, this shows that the image of $h$ in the pull-back of the orbit map $\C^*\rightarrow(\C^4\times\Mm04)/\s_4$ is a linear combination of the classes $\delta_1,\delta_2\in\coh1{\mathcal I}$. This can be used to prove that the Leray spectral sequence in cohomology associated to $q$ degenerates at $E_2$, i.e., by Poincar\'e duality, we have $\cohc\pu{\mathcal I}\cong\cohc\pu{\C^*}\otimes\cohc\pu{\mathcal A_{nh}}$ for cohomology with compact support. 

Now let us go back to the spectral sequence in Table~\ref{vassiliev-a}. Since $(\C^4\times\Mm04)/\s_4$ has cohomology with compact support concentrated in degree $10$, the Gysin long exact sequence associated to the inclusion $\Sigma\hookrightarrow(\C^4\times\Mm04)/\s_4$ yields isomorphisms
$$
\cohc{k-1}{\Sigma}\cong\cohc{k+1}{\mathcal I}
$$
for all $k\leq 9$.
In view of the structure of $\cohc\pu{\mathcal I}$ as a tensor product, we have that the $d^1$-differential $E^1_{1,5}\rightarrow E^1_{2,5}$ must be an isomorphism. All other differentials are necessarily $0$ by Hodge-theoretic reasons, since they are maps between pure Hodge structures with different weights. This implies that the cohomology of $\mathcal I$ is isomorphic (as a graded vector space with mixed Hodge structures) to the cohomology of $\C^3\times\C^*\times\C^*$ and that the cohomology of $\mathcal A_{nh}$ is isomorphic to that of $\C^3\times\C^*$. 
\qed

\begin{proof}[of Proposition~\ref{psa}]
We want to compute the cohomology with compact support of $\mathcal A$ by using the  Gysin long exact sequence
$$
\cohc k{\mathcal A}\rightarrow
\cohc k{\mathcal A_h}\xrightarrow{d_k}
\cohc {k+1}{\mathcal A_{nh}}\rightarrow
\cohc {k+1}{\mathcal A}
$$
associated to the inclusion $\mathcal A_h\hookrightarrow\mathcal A$. 
Since the cohomology with compact support of $\mathcal A_{h}$ (resp. $\mathcal A_{nh}$) is nontrivial only in degree $5$ and $6$ (resp. $7$ and $8$) the only differential which may be nontrivial is $d_6$. To prove the claim, we need to show that $d_6$ is an isomorphism or, equivalently, that the cohomology with compact support of $\mathcal A$ vanishes in degree $7$. To prove this, we are allowed to discard in our configurations all subvarieties of $\mathcal A$ of codimension larger than $1$, since they cannot possibly contribute to the cohomology with compact support in such a high degree.

Recall that $\mathcal A$ is the moduli space of pairs $(C,H)$ where $C$ is a smooth genus $1$ curve lying on a fixed quadric cone $Q$ and $H$ is a reduced plane section of $Q$ that intersects $C$ in four distinct points. In particular, the fact that $C$ lies on $Q$ endows $C$ with a natural structure as double cover of $\Pp1$ ramified at $4$ points, giving rise to a natural map $\mathcal A\rightarrow \Mm04/\s_4$. Note that, once a configuration $(0,\infty,1,t)\in\Mm 04$ is chosen, there is a canonical form for the genus $1$ curve in $Q$ lying over $\{0,\infty,1,t\}$, by taking the equation $w^2-u_0u_1(u_0-u_1)(tu_0-u_1)=0$. Therefore, it only remains to describe which reduced plane sections of $Q$ are not tangent to a fixed smooth curve $C\subset Q$. Planes in projective three-space are parametrized by a $\Pp3$; reduced plane sections $H$ come from a rational curve in this $\Pp3$, which we can discard since it has codimension $>1$. The condition that $H$ is not tangent to the fixed $C$ defines an irreducible hypersurface in the $\Pp3$ parametrizing plane sections. Here we only need to deal with irreducible plane sections because the locus of reducible plane sections has already codimension $1$ in $\Pp3$ and any special sublocus of it we could have to discard would not influence $\cohc 7{\mathcal A}$. 

From this description, it follows that the cohomology with compact support of $\mathcal A$ in degree $\geq 7$ coincides with that of the $\s_4$-quotient of a fibration over $\Mm 04$ having the complement of a $\s_4$-invariant hypersurface in $\Pp3$ as fibre. Then the claim follows from the fact that the cohomology with compact support of the complement of an irreducible hypersurfaces is $0$ in degree $6$.
\end{proof}

\subsubsection{The cohomology of the moduli space $\mathcal C'$.}
The space $\mathcal C'$ is the moduli space of genus $1$ curves $C$ with a set $\{x_1,x_2\}$ of (unordered) marked points and a line bundle $L$ such that $L^{\otimes 4}=\mathcal O(x_1+x_2)^{\otimes2}$ but $L^{\otimes 2}\neq\mathcal O(x_1+x_2)$. In this section, we give a more explicit geometric description of $\mathcal C'$ that enables one to compute its cohomology.

First we observe that the line bundle $L^{\otimes4}$ defines an embedding of $C$ into $\Pp3$; in the following, we shall identify $C$ with its image in $\Pp3$. Then all divisors linearly equivalent to $L^{\otimes 4}$ are cut by plane sections of $C$. In particular, the line bundle $L$ itself has to be cut by a plane in $\Pp3$ tangent to $C$ with multiplicity $4$ at one point $q$. This means that there is a quadric cone $Q\subset \Pp3$ containing $C$, whose ruling cuts the divisor $L^{\otimes 2}$ on $C$. Then $q$ is one of the ramification points of this $g^1_2$. Equivalently, the pair $(Q,q)$ identifies the divisor $L$. The points $x_1,x_2$ are contact points of a bitangent plane to $C$. In other words, we can describe $\mathcal C'$ as the moduli space of triples $(C,q,\Pi)$ where $C$ is a smooth genus $1$ curve lying on a fixed quadric cone $Q$, the point $q$ is a ramification point of the $g^1_2$ cut by the ruling of $Q$ and $\Pi\subset\Pp3$ is a plane tangent to $C$ at two distinct points. 

Instead than working directly with $\mathcal C'$, we work with the moduli space $\tilde {\mathcal C}$ of sequences $(C,p_1,p_2,p_3,q,\Pi)$ with $C$ and $\Pi$ are as above and $p_1,p_2,p_3,q$ are the ramification points of the $g^1_2$. The forgetful map $\tilde {\mathcal C}\rightarrow \mathcal C'$ can be interpreted as the quotient by the action of the symmetric group permuting the three points $p_1,p_2,p_3$. There is also a natural  map $\phi:\;\tilde {\mathcal C}\rightarrow\Mm04$ that maps $(C,p_1,p_2,p_3,q,\Pi)$ to the moduli of the ordered branch locus of the $g^1_2$. 

\begin{proof}[of Proposition~\ref{psc'}]
The structure of $\tilde {\mathcal C}$ as an $\s_3$-cover of $\mathcal C'$ ensures that the rational cohomology of the latter space coincides with the $\s_3$-invariant part of the cohomology of $\tilde {\mathcal C}$. We want to compute it by exploiting the Leray spectral sequence associated to the forgetful map $\phi:\;\tilde {\mathcal C}\rightarrow\Mm04$. To this end, we need to describe the fibers of $\phi$ and to calculate their cohomology.

The fibre of $\phi$ over a $4$-tuple $(p_1,p_2,p_3,q)$ is the space of all bitangents to the curve $C$ obtained as a double cover of $\Pp1$ ramified at $p_1,p_2,p_3$ and $q$. We need to parametrize all bitangent planes of the curve $C$ that give rise to reduced plane sections. Since we are studying rational cohomology, which only depends on the coarse moduli space of the stack considered, it is enough to describe all reduced bitangents up to the action of the elliptic involution of $(C,q)$. 

Then an explicit computation yields that the space of all reduced bitangents to $C$ has three distinct irreducible components, isomorphic to $\Pp1$ and depending on the choice of one of the points $p_i$. In particular, the three components are permuted by the $\s_3$-action. In the description of the components, we shall denote the line of the ruling of $Q$ passing through $q$ (respectively, through $p_i$ for $1\leq i\leq 3$) by $\ell$, respectively, $\ell_i$.

Then the point $p_3$ corresponds to the family of bitangents containing the reducible bitangent $\ell_1\cup \ell_2$ and $\ell_3\cup \ell$. This family contains exactly two flex bitangents, i.e. planes tangent to $C$ at one point with multiplicity $4$. The contact points on these flex bitangents are the points of $C$ lying over the points of $C$ lying in the fixed locus of the involution $\ell_1\leftrightarrow\ell_2$, $\ell_3\leftrightarrow\ell$ on the ruling of $Q$. In our description, we have to take only the bitangents with two distinct tangency points $x_1,x_2$, i.e. only proper bitangents, hence we need to discard these two points of the family. Hence each irreducible component of the fibre of $\phi$ over $(p_1,p_2,p_3,q)$ is isomorphic to $\C^*$. If we consider the action of the involution $p_1\leftrightarrow p_2$ on the cohomology of this component of the fibre, we get that it acts trivially in degree $0$ and as the sign representation in degree $1$. 

We obtain the other two components by taking the action of the symmetric group $\s_3$ into account. Then the cohomology of $\phi^{-1}(p_1,p_2,p_3,q)$, with its structure as $\s_3$-representation and its mixed Hodge structures, is given by $\Schur3+\Schur{2,1}$ in degree $0$ and $\Schur{2,1}+\Schur{1^3}$ in degree $1$.

At this point, recall that $\Mm04$ is isomorphic to $\Pp1$ minus $3$ points, and in particular, that its cohomology with the action of $\s_3$ permuting the first three marked points is given by $\Schur3$ in degree $0$ and $\Schur{2,1}$ in degree $1$. The map $\phi$ is $\s_3$-equivariant, hence the $\s_3$-invariant part of the $E_r$ terms of the Leray spectral sequence associated to $\phi$ converges to the cohomology of $\mathcal C'$. From the description the $\s_3$-action on the basis and the fibre of $\phi$, one gets that the only nontrivial $E_2$ terms of this Leray spectral sequence are $(E_2^{0,0})^{\s_3}=\Q$, $(E_2^{1,0})^{\s_3}=\Q(-1)$ and $(E_2^{1,1})^{\s_3}=\Q(-2)$. Then the claim follows by Poincar\'e duality.
\end{proof}

\section{The compactification of the inertia stack of $\mathcal{M}_g$}
\label{compm3}

In Section~\ref{inertiamg} we studied the twisted sectors of $\M g$ as moduli stacks of cyclic covers. In the present section we consider the compactification of these twisted sectors inside the inertia stack of $\Mb g$.
After developing the general theory, we study in detail the compactification of the moduli stacks of admissible covers that correspond to twisted sectors of $\mathcal{M}_3$.
 
Recall that in Section~\ref{inertiamg} we defined the concept of $g$-admissible datum and saw that a $g$-admissible datum $(g',N,d_1,\ldots,d_{N_1})$ always singles out a connected component of the inertia stack, described as a moduli stack of $\mu_N$-ramified covers of curves of genus $g'$. To compactify such moduli stacks of $\mu_N$-covers, 
we rely on the general theory of \emph{twisted stable map}, developed by Abramovich--Vistoli \cite{abvis2}, which in our case specializes to the theory of \emph{admissible covers}, as developed in \cite{acv}. In the language of twisted stable maps, we are studying \emph{balanced twisted stable maps} with value in the trivial gerbe $B \mu_N$. Equivalently, we are studying $\mu_N$-admissible covers \cite[Theorem 4.3.2]{acv}. 

\begin{definition} \label{compactifiedtwisted} Let $A$ be a 
$g$-admissible datum and let us denote as usual the associated moduli stack by $\mathcal{M}_A$, the component consisting of connected covers by $\mathcal{M}_A'$  and the corresponding subgroup of the symmetric group $\s_d$ by $S_A$.
We define $\overline{\mathcal{M}}_A$ (respectively, $\overline{\mathcal{M}}_A'$)  as the closure of $\mathcal{M}_A$ (respectively, $\mathcal{M}_A'$) inside $[{\mathcal{K}}_{g',d} (B \mu_N)/S_A]$, where $\mathcal{K}_{g',d} (B \mu_N)$ is the proper moduli stack of twisted stable $d$-pointed maps of genus $g'$ to $B\mu_N$ defined in \cite[Definition 4.3.1]{abvis2} (see also \cite[Section 2]{acv} for the specific case of $B \mu_N$).
\end{definition}

The stacks $\overline{\mathcal M}_A$ give connected components of
 the inertia stack of $\overline{\mathcal{M}}_g$ by associating to each admissible cover in $\overline{\mathcal M}_A$ the pair $(C^{\operatorname{stab}},\phi)$ where $C^{\operatorname{stab}}$ is obtained by stabilizing the source curve $C$ of the cover, and $\phi$ is the automorphism on $C^{\operatorname{stab}}$ induced by the action of $\mu_N$ on $C$. 
Conversely, since the moduli space of admissible covers is proper and contains the smooth ones, each smoothable cyclic cover $X\rightarrow C$ where $X$ is a stable curve has an associated admissible cover, obtained by repeatedly blowing up the two curves $X$ and $C$.

\begin{proposition} \label{instack2} Let us fix $g,N >1$. Then the compactified inertia stack $I_N(\mathcal{M}_g)$ of Definition~\ref{ibar} is isomorphic to the disjoint union of all nonempty stacks $\overline{\mathcal{M}}_A'$ 
for all $g$-admissible data $A=(g',m;d_1,\ldots,d_{m-1})$ with order $m$ equal to $N$.
\end{proposition}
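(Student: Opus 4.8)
The plan is to reduce Proposition~\ref{instack2} to Corollary~\ref{corrispondenza} together with the general machinery of admissible covers and the properness of $\mathcal K_{g',d}(B\mu_N)$. First I would recall the commutative diagram relating the three moduli problems: on the open locus we already have the isomorphism $I_N(\mathcal M_g)\cong\coprod_A\mathcal M_A'$ from Corollary~\ref{corrispondenza}, and on the compactified side we have the stabilization map sending an admissible $\mu_N$-cover $X\to C$ over $\overline{\mathcal M}_A$ to the pair $(C^{\mathrm{stab}},\phi)$, where $C^{\mathrm{stab}}$ is the stabilization of the source curve and $\phi$ the induced automorphism. The key point is that this construction yields a morphism $\coprod_A\overline{\mathcal M}_A'\to I_N(\overline{\mathcal M}_g)$ which is an isomorphism onto the union of those connected components that meet $\mathcal M_g$, and that such components are exactly what Definition~\ref{ibar} calls $\overline I_N(\mathcal M_g)$.

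I would organize the argument in three steps. \emph{Step 1: the map is well-defined and finite.} An object of $\overline{\mathcal M}_A'$ is a balanced twisted stable map to $B\mu_N$ with smooth-curve interior; the coarse space of the source, after stabilization, carries the $\mu_N$-action, so one gets a genuine point of $I_N(\overline{\mathcal M}_g)$. Functoriality in the base is inherited from the functoriality of stabilization and of the twisted-curve/coarse-space correspondence of \cite{acv}. \emph{Step 2: image is the right set of components.} Since $\overline{\mathcal M}_A'$ is proper (closed in the proper stack $[\mathcal K_{g',d}(B\mu_N)/S_A]$) and irreducible — because $\mathcal M_A'$ is connected by Theorem~\ref{connessione} and dense in $\overline{\mathcal M}_A'$ — its image is a closed irreducible substack of $I_N(\overline{\mathcal M}_g)$ meeting $I_N(\mathcal M_g)$; hence it lands in a single connected component of $I_N(\overline{\mathcal M}_g)$ whose general point is a smooth curve. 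Conversely, every connected component $Y$ of $I_N(\overline{\mathcal M}_g)$ with $i^*Y\neq\emptyset$ contains a point $(C,\alpha)$ with $C$ smooth; by Corollary~\ref{corrispondenza} this point lies in a unique $\mathcal M_A'$, and since the admissible-cover compactification is proper and dominates $\overline{\mathcal M}_g$ (the remark preceding the statement: every smoothable cyclic cover of a stable curve has an associated admissible cover obtained by iterated blow-ups), the whole component $Y$ is the stabilization-image of $\overline{\mathcal M}_A'$. So the components in the image are precisely the nonempty $\overline{\mathcal M}_A'$, and distinct admissible data give distinct components because the interiors $\mathcal M_A'$ are already disjoint. \emph{Step 3: the map is an isomorphism, not merely a bijection on components.} Here I would argue that stabilization induces an isomorphism on coarse moduli spaces — this is all that is needed for the statement, since by the convention in Section~\ref{notation} and Remark~\ref{stackyremark} we identify a stack with its coarse space when computing (orbifold) cohomology. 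The inverse is the map that, given a pointed stable curve with $\mu_N$-action, produces the associated admissible cover by the blow-up procedure of \cite{acv}; one checks these are mutually inverse on closed points and in families by the uniqueness part of the stable-reduction/admissible-cover correspondence.

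The main obstacle will be Step 2, and specifically the claim that the stabilization of an admissible $\mu_N$-cover with \emph{smooth} source in $\mathcal M_A'$ never degenerates to a boundary cover whose general point already has singular source — in other words, that the boundary of $\overline{\mathcal M}_A'$ does not accidentally surject onto a component of $I_N(\overline{\mathcal M}_g)$ different from the closure of the image of $\mathcal M_A'$, and conversely that no component of $I_N(\overline{\mathcal M}_g)$ meeting $\mathcal M_g$ fails to be hit. The first half follows from irreducibility of $\overline{\mathcal M}_A'$ plus density of $\mathcal M_A'$; the genuinely delicate half is showing that the admissible-cover compactification is \emph{surjective} onto each such component, i.e. that every one-parameter family of smooth cyclic covers degenerating inside $I_N(\overline{\mathcal M}_g)$ can be completed to a family of admissible covers. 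This is exactly the stable-reduction statement for twisted stable maps to $B\mu_N$, which is \cite[Theorem 4.3.2]{acv} (equivalently the valuative criterion of properness for $\mathcal K_{g',d}(B\mu_N)$ from \cite{abvis2}); I would cite it rather than reprove it, and the rest of Step 2 is then formal.
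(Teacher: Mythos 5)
Your proof is correct and takes essentially the same route as the paper. The paper's own proof is only two sentences long---it says to adapt the proof of Corollary~\ref{corrispondenza} and to stabilize unstable covering curves via Knudsen---with the surrounding discussion (the stabilization map $\overline{\mathcal M}_A \to I(\overline{\mathcal M}_g)$ and the converse via properness of admissible covers) supplying exactly the ingredients you spell out in your three steps.
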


\begin{proof} The proof of the proposition follows by adapting the proof of Proposition \ref{corrispondenza}. 
In the case when the covering curve $C'$ turns out to be unstable, one applies the usual stabilization procedure (\cite{knudsen}).
\end{proof}

\noindent In other words, in Definition \ref{compactifiedtwisted}, we have described all the twisted sectors of $I(\overline{\mathcal{M}}_g)$ that do not come from the boundary. 

\begin{remark} It is clear that $\overline{I}(\mathcal{M}_g) \neq I(\overline{\mathcal{M}}_g)$, i.e. that there are twisted sectors of $\M g$ that do not contain any smooth curves. To see this, take two smooth, $1$-pointed curves, one of genus $g'\geq1$ and the other of genus $g-g'$, each admitting an automorphism of different order that fixes the marked point. Now the curve $C$ obtained gluing the two curves at the marked points, with the automorphism induced by the two automorphisms, is a point in the inertia stack of $\overline{\mathcal{M}}_g$ that is not in $\overline{I}(\mathcal{M}_g)$.
\end{remark}

Next, we turn our attention to the cohomology of the moduli stacks $\overline{\mathcal{M}}_A$ and, more specifically, to the compactified twisted sectors of $\M g$ whose general object is a curve that is described as the cyclic cover of a genus $0$ curve. 
For combinatorial reasons, the large majority of cases fall into this class.
We can reduce the problem of computing the cohomology groups of the $\overline{\mathcal{M}}_A$ with $g'=0$  to the problem of computing the part of the cohomology of $\overline{\mathcal{M}}_{0,d}$ under the action of the subgroup $S_A\subset \s_n$, which is then known (see \cite[5.8]{getzleroperads}). This relies on the construction of the $\overline{\mathcal{M}}_A$ as stack quotients of a connected substack of ${\mathcal{K}}_{g',d} (B \mu_N)$.

If $X$ is a scheme, $D$ is an effective Cartier divisor, and $r$ is a natural number, then \cite{cadman} and \cite{agv2} introduced the stack $X_{D,r}$, called the \emph{root of a line bundle with a section}. The following result is essential for our application:
\begin{proposition} (\cite[Corollary 2.3.7]{cadman}) Let $X$ be a scheme. If $X_{D,r}$ is obtained from $X$ by applying the root construction, the canonical map $X_{D,r} \to X$ exhibits $X$ as the coarse moduli space of $X_{D,r}$.
\end{proposition}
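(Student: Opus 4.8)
The final statement to prove is the proposition from \cite{cadman}: if $X_{D,r}$ is the stack obtained from a scheme $X$ by the root construction along an effective Cartier divisor $D$ (with root order $r$), then the canonical map $\pi\colon X_{D,r}\to X$ exhibits $X$ as the coarse moduli space of $X_{D,r}$.

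The plan is to verify the two defining properties of a coarse moduli space for $\pi\colon X_{D,r}\to X$: first, that $\pi$ induces a bijection on geometric points, i.e. $X_{D,r}(\mathrm{Spec}\,\overline{k})\to X(\mathrm{Spec}\,\overline{k})$ is a bijection on isomorphism classes for every algebraically closed field $\overline{k}$; and second, that $\pi$ is universal among maps from $X_{D,r}$ to algebraic spaces (equivalently, that $\mathcal{O}_X\to\pi_*\mathcal{O}_{X_{D,r}}$ is an isomorphism, which together with properness of $\pi$ and the point-count gives the universal property by the Keel--Mori machinery). First I would recall the local model: étale-locally on $X$, the divisor $D$ is cut out by a single function $t$, and $X_{D,r}$ is the quotient stack $[\,(\mathrm{Spec}\,\mathcal{O}_X[s]/(s^r-t))/\mu_r\,]$, where $\mu_r$ acts on $s$ by scaling. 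This reduces everything to a purely local computation, and since the formation of both the root stack and of the coarse space commutes with the relevant étale base change, the local statement globalizes.

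Next I would carry out the local check. Over the locus $X\setminus D$ the function $t$ is invertible, so $\mathrm{Spec}\,\mathcal{O}_X[s]/(s^r-t)\to X$ is a $\mu_r$-torsor and the quotient $[\,\cdot/\mu_r\,]$ is isomorphic to $X\setminus D$ itself; hence $\pi$ is an isomorphism away from $D$, in particular the point-bijection and $\mathcal{O}\to\pi_*\mathcal{O}$ are automatic there. Along $D$, one computes the $\mu_r$-invariants of $\mathcal{O}_X[s]/(s^r-t)$ directly: the $\mu_r$-action grades this ring by the exponent of $s$ modulo $r$, and the degree-zero (invariant) part is exactly $\mathcal{O}_X$ (since $s^r=t\in\mathcal{O}_X$), which gives $\pi_*\mathcal{O}_{X_{D,r}}=\mathcal{O}_X$. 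For the point-bijection one checks that a $\overline{k}$-point of $[\,Z/\mu_r\,]$ where $Z=\mathrm{Spec}\,\mathcal{O}_X[s]/(s^r-t)$ is a $\overline{k}$-point of $Z$ up to the $\mu_r$-action, and since $\overline{k}$ is algebraically closed the $r$-th roots of any value of $t$ form a single $\mu_r$-orbit, so this set of orbits is in bijection with $X(\mathrm{Spec}\,\overline{k})$. Finally, $\pi$ is proper (it is of finite type, and the root stack of a scheme is a closed substack of a product of root stacks each of which is visibly proper over $X$, being a quotient of a finite $X$-scheme), so one concludes by the standard criterion — a proper morphism from a (nice) stack to an algebraic space that is bijective on geometric points and satisfies $\mathcal{O}\to\pi_*\mathcal{O}\cong$ iso is a coarse moduli map — or alternatively by quoting the Keel--Mori theorem that a separated finite-type Deligne--Mumford stack has a coarse space and checking that $X$ with the map $\pi$ satisfies its universal property using the local quotient presentation.

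The main obstacle I expect is not any single computation but the bookkeeping needed to pass from the local quotient description to a clean global statement: one must check that the root construction is genuinely étale-local on $X$ in the required sense (this is how $X_{D,r}$ is defined in \cite{cadman} and \cite{agv2}, so it is available to us), and that "being a coarse moduli space" is itself an étale-local property on the target, so that verifying it on an étale cover of $X$ suffices. Once that reduction is in place the argument is the short invariant-ring and root-counting computation above. An alternative, essentially equivalent route — which may read more cleanly — is to invoke directly that $X_{D,r}$ is locally a quotient $[U/\mu_r]$ with $U\to X$ finite, hence $X_{D,r}$ is a tame Deligne--Mumford stack whose coarse space is computed locally as $\mathrm{Spec}$ of the ring of invariants, and then identify that ring of invariants with $\mathcal{O}_X$; this is precisely the content of \cite[Corollary 2.3.7]{cadman}, and in the paper we simply cite it, so a full proof is not needed here and the sketch above suffices to orient the reader.
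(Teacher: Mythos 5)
The paper states this proposition as a citation to Cadman's Corollary~2.3.7 and does not reprove it, so there is no paper proof to compare against; your sketch is a correct and standard argument for the cited result. You correctly reduce to the \'etale-local quotient presentation $X_{D,r}\simeq[\mathrm{Spec}(\mathcal O_X[s]/(s^r-t))/\mu_r]$, compute the $\mu_r$-invariants to be $\mathcal O_X$, check the bijection on geometric points (one $\mu_r$-orbit over each point of $X$ whether or not $t$ vanishes there), and invoke the fact that for a finite group quotient of an affine scheme the coarse space is $\mathrm{Spec}$ of the invariant ring, together with \'etale descent of the coarse-moduli property. The only cosmetic wrinkle is the phrase about the root stack being ``a closed substack of a product of root stacks'': for a single divisor this detour is unnecessary, since $X_{D,r}$ is already \'etale-locally a quotient of a finite $X$-scheme by $\mu_r$, which gives properness of $\pi$ directly. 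Since the paper merely invokes the result, your level of detail is appropriate and nothing essential is missing.
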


\begin{theorem} (\cite[p.2]{bayercadman}) \label{bayercadman} Let $A$ be a $g$-admissible datum (see Definition \ref{admissible}), with $g'$ equal to $0$. The space $\overline{\mathcal{M}}_A$ is then a $\mu_N$-gerbe over the quotient stack $[X / S_A]$, where $X$ is a stack constructed starting from $\overline{\mathcal{M}}_{0,\sum d_i}$ by successively applying the root construction (see \cite[Section 2]{bayercadman}).
\end {theorem}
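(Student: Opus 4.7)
The plan is to interpret $\overline{\mathcal{M}}_A$ as a moduli stack of extra structure on stable $d$-pointed genus $0$ curves, where $d=\sum d_i$, and then to realize that extra structure as what is universally classified by an iterated root construction over $\overline{\mathcal{M}}_{0,d}$.

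First, I would unpack Definition~\ref{compactifiedtwisted} in the case $g'=0$: an $S$-point of $\overline{\mathcal{M}}_A$ is a balanced twisted stable map $\mathcal{C}\to B\mu_N$ from a genus $0$ twisted curve with $d$ marked gerbes, grouped into $N-1$ blocks of cardinalities $d_1,\ldots,d_{N-1}$ according to local monodromy weight, modulo the action of $S_A$ permuting markings inside each block. The $j$-th marked gerbe has cyclic stabilizer of order $N/\gcd(i_j,N)$, while the balanced condition at each node imposes opposite monodromies on the two branches.

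Next, I would invoke the standard dictionary between representable morphisms to $B\mu_N$ and $\mu_N$-torsors, combined with Cadman's description of roots of line bundles with sections, to identify such a twisted stable map with a pair $(L,\phi)$ on the coarse base $(C',p_1,\ldots,p_d)$, where $L$ is a line bundle and $\phi:L^{\otimes N}\xrightarrow{\sim}\mathcal{O}_{C'}\bigl(\sum_j i_j p_j\bigr)$ is an isomorphism. This extends Fact~\ref{pardiniprop} to nodal $C'$ through the root-stack formalism of \cite{cadman}. Consequently, $\overline{\mathcal{M}}_A$ is the $S_A$-quotient of the moduli stack $Y$ over $\overline{\mathcal{M}}_{0,d}$ parametrizing such data $(C',p_1,\ldots,p_d,L,\phi)$.

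The third step is the explicit description of $Y$ via root constructions. In a family over $\overline{\mathcal{M}}_{0,d}$, producing $(L,\phi)$ amounts to choosing an $N$-th root of the line bundle $\mathcal{O}_{C'}\bigl(\sum_j i_j p_j\bigr)$; universally, this is accomplished by taking $N$-th roots along the universal sections and, because of balancing, along the boundary divisors of $\overline{\mathcal{M}}_{0,d}$ with the weights prescribed by $A$. Iterating Cadman's construction yields a stack $X$ over $\overline{\mathcal{M}}_{0,d}$ carrying a universal such $(L,\phi)$, well-defined up to rescaling $\phi$ by $\mu_N$. The residual $\mu_N$-ambiguity in $\phi$ is exactly the group of deck transformations acting on every $\mu_N$-cover, so $Y\to X$ is a $\mu_N$-gerbe; passing to $S_A$-quotients finally produces the $\mu_N$-gerbe $\overline{\mathcal{M}}_A\to [X/S_A]$.

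The main obstacle is the precise combinatorial determination of $X$: one must identify along which boundary divisors of $\overline{\mathcal{M}}_{0,d}$ the root construction has to be applied, and with which weights, so that the resulting root stack carries the universal family of pairs $(L,\phi)$. This requires tracking the Picard group of $\overline{\mathcal{M}}_{0,d}$ and checking how the congruence $\sum_j i_j\equiv 0\pmod{N}$ distributes among the components of a nodal degeneration; this is the technical heart of \cite{bayercadman}.
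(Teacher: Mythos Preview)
The paper does not supply its own proof of this theorem: it is quoted as a result of Bayer--Cadman and used as a black box to derive Corollary~\ref{corollariocadman}. So there is no in-paper argument to compare against; what can be said is whether your sketch matches the Bayer--Cadman approach the paper is invoking.

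Broadly, yes: identifying objects of $\overline{\mathcal{M}}_A$ with balanced twisted stable maps to $B\mu_N$, translating these into $(L,\phi)$ data on the coarse curve, and then recognizing the residual $\mu_N$ acting on $\phi$ as the gerbe structure is exactly the Bayer--Cadman picture. Your acknowledgement that the combinatorial bookkeeping of root weights along boundary divisors is the technical core of \cite{bayercadman} is also accurate.

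One imprecision worth flagging: you write that one takes ``$N$-th roots along the universal sections and \ldots\ along the boundary divisors of $\overline{\mathcal{M}}_{0,d}$''. The universal sections are divisors in the universal \emph{curve}, not in $\overline{\mathcal{M}}_{0,d}$, so rooting along them does not produce a stack over $\overline{\mathcal{M}}_{0,d}$. Over the interior $\mathcal{M}_{0,d}$ there is in fact nothing to root: since $\mathrm{Pic}(\mathbb{P}^1)\cong\mathbb{Z}$ and $\sum_i i\,d_i\equiv 0\pmod N$, the line bundle $L$ is already uniquely determined, and only the $\mu_N$-ambiguity in $\phi$ remains. The root constructions building $X$ are applied solely along the boundary divisors of $\overline{\mathcal{M}}_{0,d}$, with orders dictated by the stabilizers at the balanced twisted nodes. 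Once you adjust that sentence, your outline is a fair summary of the cited argument.
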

\noindent By combining these two results, we obtain a simple description of the cohomology of the twisted sectors of $\overline{I} (\mathcal{M}_g)$ whose general element is a cyclic cover of a genus $0$ curve:
\begin{corollary} \label{corollariocadman} If $A$ is a $g$-admissible datum with base genus $g'$ equal to $0$, then the stack $\overline{\mathcal{M}}_A$ has the same rational Chow groups and rational cohomology groups as $\overline{\mathcal{M}}_{0,\sum d_i}\big/ S_A $.
\end{corollary}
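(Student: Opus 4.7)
The plan is to assemble the three ingredients already at hand: Bayer--Cadman's presentation of $\overline{\mathcal{M}}_A$ as a gerbe over an iterated root construction, Cadman's result that the root construction does not alter the coarse moduli space, and the convention recalled in Section~\ref{notation} that the rational cohomology of a Deligne--Mumford stack agrees with that of its coarse moduli space (and similarly for rational Chow groups).

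First I would invoke Theorem~\ref{bayercadman} to obtain a $\mu_N$-gerbe $p\co \overline{\mathcal{M}}_A \to [X/S_A]$, where $X$ is built from $\Mmb{0}{\sum d_i}$ by successive root constructions along boundary divisors. Since $\mu_N$ is finite, every geometric fibre of $p$ is a copy of $B\mu_N$, whose rational cohomology (and rational Chow ring) is $\mathbb{Q}$ concentrated in degree zero. The Leray spectral sequence for $p$ then collapses and yields that $p^*$ is an isomorphism on rational cohomology; the analogous fact for rational Chow groups is part of Vistoli's intersection theory on Deligne--Mumford stacks. Consequently it suffices to compute the rational cohomology and the rational Chow groups of $[X/S_A]$.

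Second, iterating the Cadman proposition stated just above the theorem, the tower of root constructions $X \to \Mmb{0}{\sum d_i}$ exhibits $\Mmb{0}{\sum d_i}$ as the coarse moduli space of $X$. Passing to the $S_A$-quotient and using that the formation of coarse moduli spaces commutes with quotients by finite group actions (for $S_A \subset \s_d$ acting on a DM stack), the coarse moduli space of $[X/S_A]$ is precisely $\Mmb{0}{\sum d_i}/S_A$. Finally, the convention from Section~\ref{notation} identifies the rational cohomology of $[X/S_A]$ with the rational cohomology of its coarse moduli space, and similarly for rational Chow groups, which closes the argument.

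The only non-formal input is the first step, namely that $\mu_N$-gerbes are invisible to rational cohomology and to rational Chow; I expect this to be the subtlest point to spell out cleanly, but it is standard once one notes that $\mu_N$ is a finite cyclic group and that we are working with $\mathbb{Q}$-coefficients throughout.
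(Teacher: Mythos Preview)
Your proposal is correct and follows essentially the same route as the paper, which simply states the corollary as an immediate consequence of Cadman's proposition and Theorem~\ref{bayercadman} without further elaboration. You have spelled out the two implicit steps---that a $\mu_N$-gerbe is invisible to rational cohomology and rational Chow groups, and that iterated root constructions leave the coarse moduli space unchanged so that one may invoke the convention from Section~\ref{notation}---in a way the paper leaves to the reader.
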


Using the results of \cite[5.8]{getzleroperads}, we can now determine the rational cohomology of the positive-dimensional twisted sectors $\overline{\mathcal{M}}_A'$ whose general object covers a genus $0$ curve. The Hodge--Grothendieck characters of these spaces is listed in the third column of tables \ref{base0dim>0} and \ref{base0dim0}.  The twisted sectors are proper smooth stacks, hence their $k$-cohomology group carries a pure Hodge structure of weight $k$. For this reason, the Hodge--Grothendieck characters determine the rational cohomology as vector space with Hodge structures.

\subsection{The compactification of the inertia stack of $\mathcal{M}_3$} 
There are only four twisted sectors in ${\mathcal{M}}_{3}$ whose general object covers curves of genus $1$ or $2$. They are the spaces that we called $\mathcal{A}, \mathcal{B}, \mathcal{C}'$ and $\mathcal{D}'$ in section~\ref{inertiam3}. The remainder of the present section is thus devoted to investigating the geometry of their compactifications, in order to compute their rational cohomology.

The general strategy here is the following. Since we deal with proper smooth stacks, their cohomology is determined uniquely by the Hodge--Gro\-then\-dieck character. To compute this, we exploit the additivity of Hodge--Grothendieck characters for compact support.

As we already know the Hodge--Grothendieck characters for compact support of the open parts as a consequence of the Propositions~\ref{basegenus0}, \ref{psa}, \ref{psc'} and the Corollaries~\ref{psb} and~\ref{psd'},
we need to study the irreducible components of 
$$\overline{\mathcal{A}} \setminus \mathcal{A}, \ \overline{\mathcal{B}} \setminus \mathcal{B}, \
\overline{\mathcal{C}'} \setminus \mathcal{C}',\ \overline{\mathcal{D}'} \setminus \mathcal{D}'.$$ 

Furthermore, by Poincar\'e duality, all we need to know are the coefficients of the Hodge--Grothendieck character with degree greater than or equal to half the complex dimension of the stack considered.

We describe in detail the case of $\mathcal{A}$ (the most complicated), and we sketch the proofs of the other cases.

\begin{proposition} \label{psca} The Hodge--Grothendieck character of $\overline{\mathcal{A}}$ is
$$
\chihdg{\overline{\mathcal A}}=L^4+6 L^3+ 9L^2+ 6L+ 1.
$$
\end{proposition}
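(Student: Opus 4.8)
The plan is to compute $\chihdg{\overline{\mathcal A}}$ by additivity of Hodge--Grothendieck characters for compact support, writing
\[
\chihdg{\overline{\mathcal A}}=\chihdg{\mathcal A}+\chihdg{\overline{\mathcal A}\setminus\mathcal A},
\]
where $\chihdg{\mathcal A}=L^4+L^2$ is already known from Proposition~\ref{psa}. So the real task is to identify and stratify the boundary $\overline{\mathcal A}\setminus\mathcal A$ and compute the Hodge--Grothendieck character of each stratum. Since $\overline{\mathcal A}$ is a proper smooth (Deligne--Mumford) stack of dimension $4$, once $\chihdg{\overline{\mathcal A}}$ is known the rational cohomology is determined (each $H^k$ is pure of weight $k$), and moreover by Poincar\'e duality it suffices to pin down the coefficients of $L^4,L^3,L^2$ in $\chihdg{\overline{\mathcal A}}$; the lower ones follow by the symmetry of the Betti numbers of a smooth proper stack. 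A useful sanity check along the way is that the weight-$0$ part must be $1$ (the space is connected and proper) and the whole answer must be palindromic.

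The key geometric input is the modular description of $\overline{\mathcal A}$ as (the closure in) a space of $\mu_2$-admissible covers, equivalently via Definition~\ref{compactifiedtwisted} inside $[\mathcal K_{1,4}(B\mu_2)/S_A]$ with $A=(1,2;4)$. An object of $\mathcal A$ is a genus~$1$ curve $C$ with an unordered $4$-point divisor $D_1$ and a square root $L$ of $\mathcal O(D_1)$; equivalently, by the analysis in Section~\ref{inertiam3}, a smooth genus~$1$ curve on a fixed quadric cone $Q\subset\Pp3$ together with a reduced plane section $H$ meeting $C$ in $4$ distinct points, or a bielliptic genus~$3$ curve. The boundary strata of $\overline{\mathcal A}$ correspond to degenerations where the covering genus~$3$ curve acquires nodes — concretely, where the $4$ branch points of the double cover collide, or where the genus~$1$ base degenerates to a nodal rational curve, or both. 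First I would enumerate these admissible-cover degenerations combinatorially: the base degenerates from $\overline{\mathcal M}_{1,k}$-type strata with the $4$ marked points distributed among the components, and the double-cover structure is recorded by a theta characteristic / square-root datum on each component. This is exactly the kind of boundary analysis carried out for $\mathcal B$, $\mathcal C'$, $\mathcal D'$; the difference is that $\mathcal A$ has the richest boundary, which is why the authors single it out.

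Concretely I expect the strata to break into: (i) the locus where the four points $\{x_1,\dots,x_4\}$ on the rational curve $H\cong\Pp1$ collide in the patterns classified by $\overline{\mathcal M}_{0,4}/\s_4$, each contributing a fibration whose Hodge--Grothendieck character I can read off from Getzler's formulas for $\overline{\mathcal M}_{0,n}$ (as in Corollary~\ref{corollariocadman}); (ii) the locus $\mathcal A_h$ of bielliptic structures on hyperelliptic genus~$3$ curves, whose coarse space equals that of $\mathcal D'$, together with its own closure $\overline{\mathcal A_h}$, which by Lemma~\ref{lemmad'} is governed by $\overline{\mathcal M}_{0,6}/(\s_4\times\s_2)$; and (iii) mixed degenerations where both the base genus~$1$ curve becomes nodal and the branch points collide, corresponding to lower-dimensional boundary strata of the admissible-cover space. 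For each stratum I would compute $\chihdg{}$ via its description as a quotient of a space built from $\overline{\mathcal M}_{0,n}$'s by root constructions and symmetric-group actions, invoking Corollary~\ref{corollariocadman} and Getzler's computation, then sum everything.

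The main obstacle I anticipate is the bookkeeping of the boundary: correctly listing every admissible-cover degeneration of a bielliptic genus~$3$ curve without double-counting, keeping track of the square-root (theta-characteristic) datum on each component of the degenerate base, and handling the $S_A=\s_4$-quotient and the residual automorphisms (extra $\mu_2$'s from the hyperelliptic involution, etc.) so that the $\s_4$-invariant pieces of the cohomology of the relevant $\overline{\mathcal M}_{0,n}$'s are extracted correctly. A secondary subtlety is that some boundary strata of $\overline{\mathcal A}$ parametrize covers whose source is itself reducible/unstable, requiring the stabilization procedure of Proposition~\ref{instack2}; I must be sure the stack $\overline{\mathcal A}$ (closure of $\mathcal A$) does not pick up extraneous components. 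Once the stratification is correct, the individual $\chihdg{}$ computations are routine applications of Getzler's formulas, and the final answer $L^4+6L^3+9L^2+6L+1$ should fall out after checking palindromicity as a consistency test.
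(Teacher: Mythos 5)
Your high-level plan — additivity of $\chihdg{\cdot}$, stratify the boundary $\overline{\mathcal A}\setminus\mathcal A$, and use Poincar\'e duality to finish — is the same as the paper's, but there are concrete problems in the starting point and in the proposed stratification.

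First, a sign error that would propagate: Proposition~\ref{psa} gives $P_{\mathcal A}(t)=L^4t^8+L^2t^5$, so $\chihdg{\mathcal A}=L^4-L^2$, not $L^4+L^2$. Since you only plan to compute the top three coefficients directly and fill in the rest by duality, the palindromicity check will not catch this.

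Second, and more importantly, you do not clearly identify the geometric handle the paper uses: the finite map $t:\overline{\mathcal A}\rightarrow[\overline{\mathcal M}_{1,4}/\s_4]$ extending the forgetful map of the admissible cover, and the induced stratification of $\overline{\mathcal A}$ by the \emph{topological-type} strata of $[\overline{\mathcal M}_{1,4}/\s_4]$. Your item~(i) says the collision of the four branch points is governed by $\overline{\mathcal M}_{0,4}/\s_4$; this conflates the auxiliary double-cover presentation of the base genus-$1$ curve with the admissible-cover moduli problem. The relevant base is $[\overline{\mathcal M}_{1,4}/\s_4]$, which has four codimension-one boundary divisors and nine codimension-two strata, and the actual work is to count irreducible components of $t^{-1}$ over each one (these components distinguish trivial from nontrivial $\mu_2$-restrictions on the genus-$1$ component, whether nodes lie in the branch locus, etc.) and compute the compactly supported Hodge--Grothendieck character of each. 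Several of these pieces are \emph{not} of the form covered by Corollary~\ref{corollariocadman} (which requires base genus $0$): one gets factors like $\mathcal M_{1,1}$, $X_1(2)$, and the bielliptic moduli $II_1$, $II^1/\s_2$ from \cite{pagani2}, which need separate arguments.

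Third, your item~(ii) misplaces the hyperelliptic locus $\mathcal A_h$: it is a closed substack of the \emph{open} part $\mathcal A$ (it parametrizes smooth bielliptic-hyperelliptic genus-$3$ curves), used in the proof of Proposition~\ref{psa}, not a boundary stratum of $\overline{\mathcal A}\setminus\mathcal A$. Including it in the boundary enumeration would double-count.

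So while your overall strategy is the right one, the plan as written would not produce a correct computation without fixing the initial sign, replacing the vague collision picture with the explicit $t$-pullback of the topological-type stratification of $[\overline{\mathcal M}_{1,4}/\s_4]$ and a careful count of irreducible components over each stratum, and recognizing that Corollary~\ref{corollariocadman} alone does not handle the genus-$1$ pieces.
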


\begin{proof} We have seen in Proposition \ref{psa} that the Hodge--Grothendieck character for compact support of $\mathcal{A}$ is $L^4-L^2$. The moduli stack $\mathcal{A}$ admits a finite \'etale map onto $[\mathcal{M}_{1,4}/\s_4]$. 
This map extends to a finite map $t: \overline{\mathcal{A}} \to [\overline{\mathcal{M}}_{1,4}/\s_4]$ (see \cite[3.0.5]{acv}) on the compactification $\overline{\mathcal{A}}$ by means of admissible covers. 

The stratification of $\Mmb 14$ by topological type induces a stratification on $\overline{\mathcal{A}}$. We need to study its strata of codimension $1$ and $2$.
The quotient stack $[\overline{\mathcal{M}}_{1,4}/\s_4]$ has four boundary divisors, and their general element is as in Figure~\ref{fig:bdm14}. We denote by $D_1,\dots,D_4$ the associated locally closed codimension $1$ strata, obtained by removing all curves with more than one node.

\begin{figure}[ht]
\centering
\tiny
\psfrag{1}{$1$} 
\psfrag{2}{$0$}
\begin{tabular}{cccc}
\includegraphics[scale=0.3]{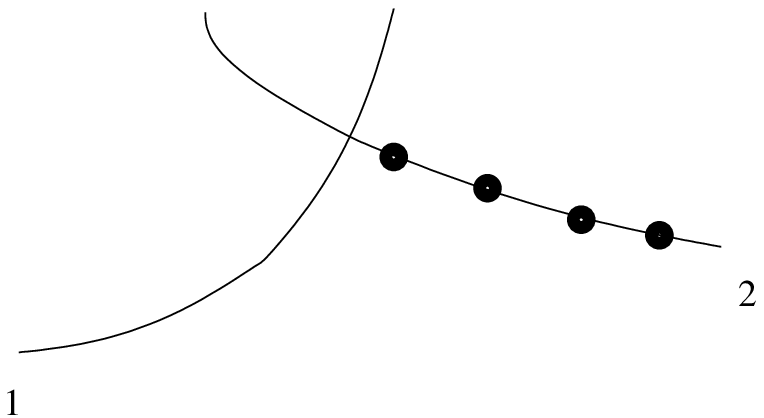}&
\includegraphics[scale=0.3]{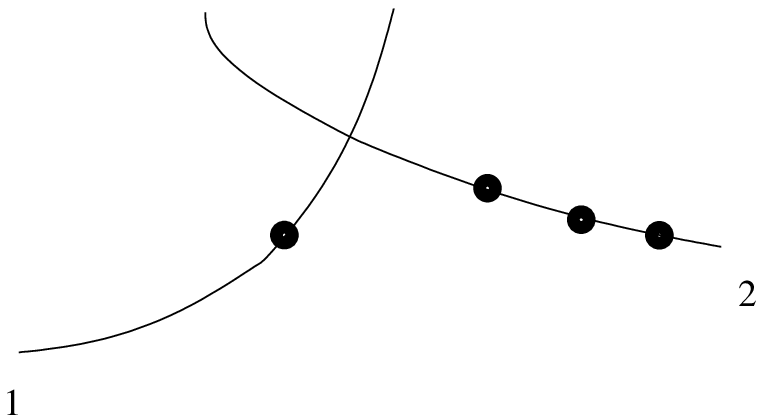}&
\includegraphics[scale=0.3]{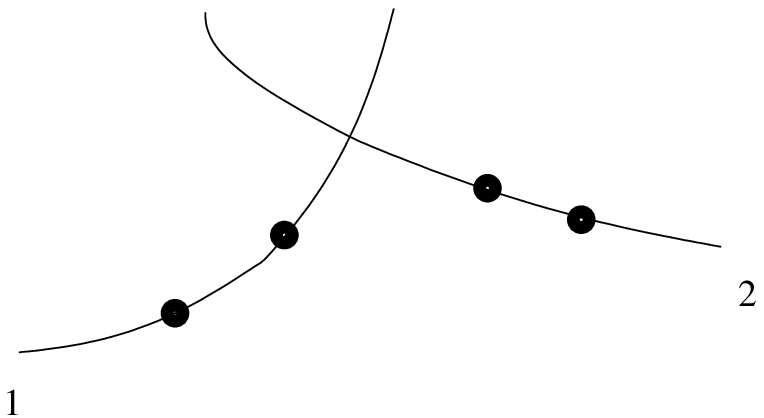}&
\includegraphics[scale=0.3]{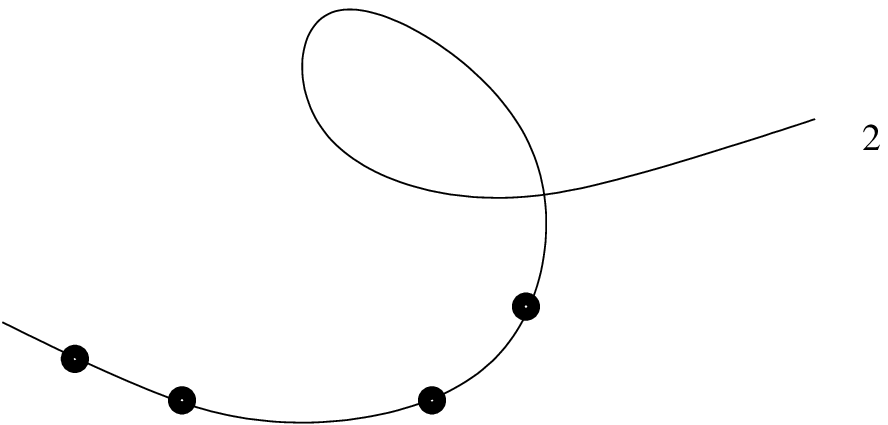}
\\
$D_1$&$D_2$&$D_3$&$D_4$
\end{tabular}
\caption{\label{fig:bdm14} The four boundary strata of codimension $1$ in $\big[\overline{\mathcal{M}}_{1,4}/\s_4 \big]$}
\end{figure}

Now we describe one by one the irreducible components of $\overline{\mathcal{A}}$ that map onto the four codimension $1$ boundary strata we have just pictured:
\begin{enumerate}
\item There are two irreducible components $D_1', D_1''$ lying over $D_1$. They pa\-ram\-e\-trize admissible double covers $C\rightarrow C'$ such that the restriction to the preimage of the genus $1$ component is, respectively, a trivial $\mu_2$-cover in the case of $D_1'$ and a nontrivial $\mu_2$-cover in the case of $D_1''$.  The coarse moduli space of $D_1'$ is isomorphic to $\mathcal{M}_{1,1} \times \mathcal{M}_{0,5}/\s_4$, and its Hodge--Grothendieck character for compact support is then $L^3$. The moduli space of $D_1''$  is isomorphic to $X_1(2) \times  \mathcal{M}_{0,5}/\s_4$ and $\chihdg{D_1''}=L^3-L^2$.
\item Over $D_2$  there is one irreducible component $D_2'$, whose moduli space is isomorphic to $II_1 \times \mathcal{M}_{0,4}/\s_3$. Here $II_1$ is the moduli space of bielliptic curves with a choice of a distinguished bielliptic involution, and an ordering of the ramification points (see \cite[Proposition 2.25]{pagani2}). 
As shown there, the stack $II_1$ has $\Mm05/\s_3$ as coarse moduli space. Hence, one has $\chihdg{D_2'}=L^3-L^2+L$;
\item Over $D_3$ there is one component $D_3'$. 
Its moduli space is $II^1/\s_2$, where $II^1$ is the moduli space of bielliptic curves $C$ with a distinguished bielliptic involution $\alpha$ and a point $p$ not fixed by $\alpha$, and the involution on $II^1$ is defined by  sending $(C,p, \alpha)$ to $(C, \alpha(p), \alpha)$. 
Using the construction of \cite[Proposition~2.22]{pagani2}, it is easy to show that the coarse moduli space of $II^1/\s_2$ is a $\C^*$-bundle over $\Mm05/\s_3$. 
In particular, one has $\chihdg{D_3'}=\chihdg{II^1/\s_2}=L^3-2L^2+2L-1$;
\item Finally, over $D_4$ there are two components, one whose general element is a cover unramified over the node, and the other one whose general element is totally ramified over the node. Both these moduli spaces are isomorphic to $\mathcal{M}_{0,6}/\s_4 \times \s_2$, and their Hodge--Grothendieck character for compact support is then $L^3-L^2$.
\end{enumerate}

The second step is to study the number of irreducible components of
the preimages in $\overline{\mathcal{M}}_A$ of each of the codimension~$2$ strata of $[\overline{\mathcal{M}}_{1,4}/\s_4]$.  There are exactly nine codimension~$2$ strata $F_1,\dots,F_9$ in $[\Mmb14/\s_4]$. We describe them in Figure~\ref{cod2mb14} by drawing their general element.

\begin{figure}[ht]
\centering
\tiny
\psfrag{1}{$1$} 
\psfrag{2}{$0$}
\begin{tabular}{ccc}
\includegraphics[scale=0.4]{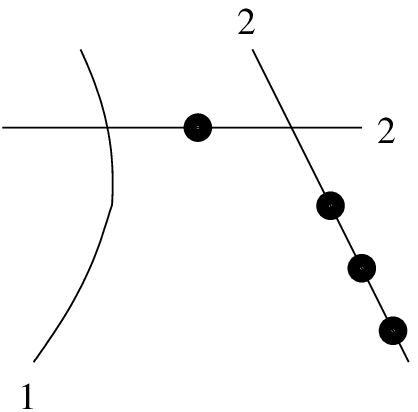}&
\includegraphics[scale=0.4]{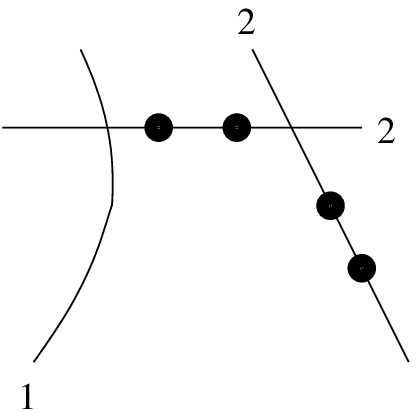}&
\includegraphics[scale=0.4]{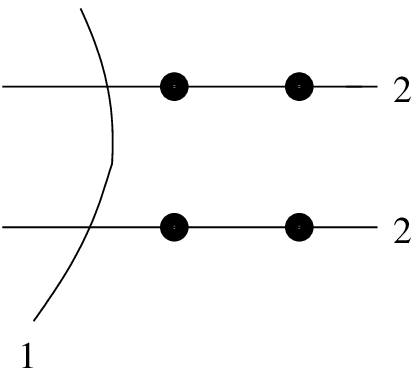}\\
$F_1$&$F_2$&$F_3$\\
\\
\includegraphics[scale=0.4]{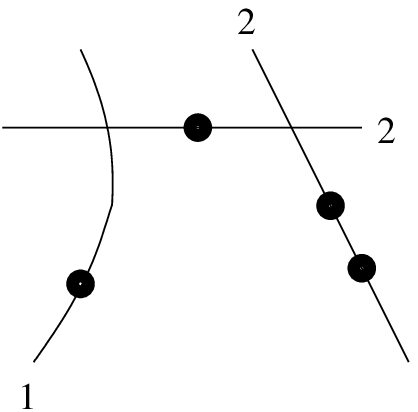}&
\includegraphics[scale=0.4]{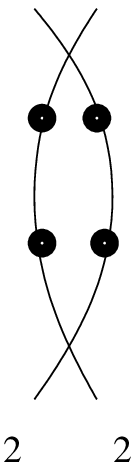}&
\includegraphics[scale=0.4]{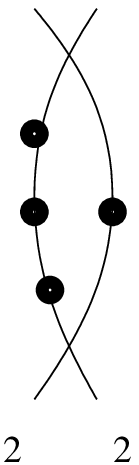}
\\
$F_4$&$F_5$&$F_6$\\
\includegraphics[scale=0.4]{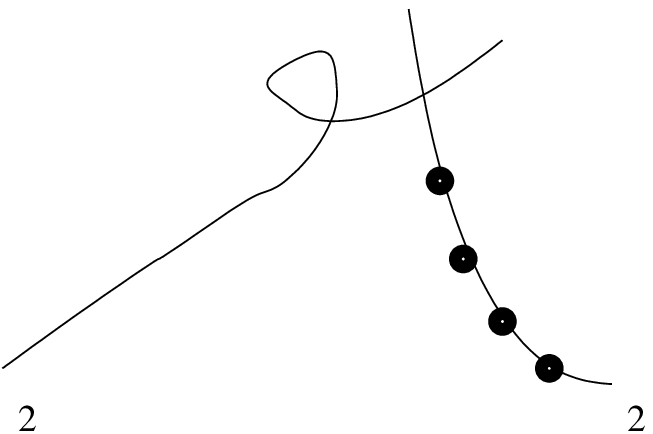}&
\includegraphics[scale=0.4]{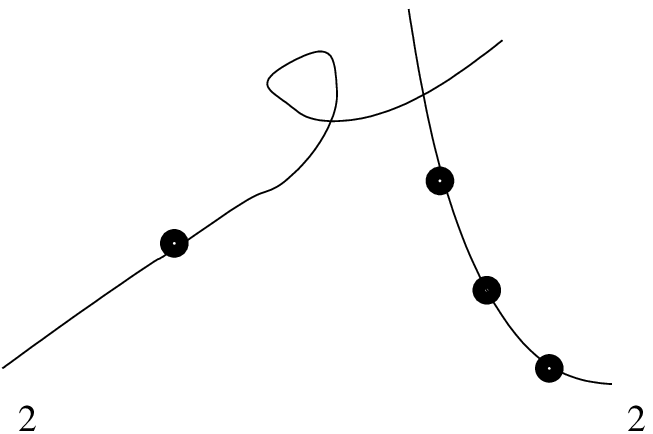}&
\includegraphics[scale=0.4]{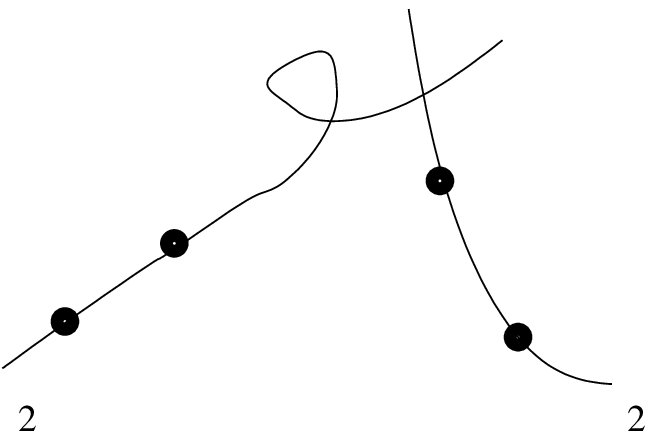}
\\
$F_7$&$F_8$&$F_9$
\end{tabular}
\caption{\label{cod2mb14}The nine strata of codimension~$2$ in $\big[\overline{\mathcal{M}}_{1,4}/\s_4 \big]$}
\end{figure}

\begin{itemize}
\item The strata $F_1, F_2$ and $F_3$ lie in the closure of the codimension $1$ stratum $D_1$. Again, the preimage of each of them under $t$ has two components, corresponding respectively to trivial and nontrivial $\mu_2$-covers of the irreducible component of genus $1$;
\item The stratum $F_4$ is contained in the closure of $D_2$ and its preimage under $t$ is irreducible;
\item The preimage $t^{-1}(F_5)$ has two components, 
corresponding to whether the two nodes are contained or not in the branch locus;
\item The preimage $t^{-1}(F_6)$ is irreducible;
\item The strata $F_7, F_8$ and $F_9$ lie in the closure of $D_4$ and the preimage of each of them has $2$ irreducible components. 
Indeed, while the fact that the separating node is or not part of the branch locus depends on the disposition of the other branch points, the irreducible node can be either a branch point or a regular point, which gives rise to two different components, exactly as it was the case for $D_4$.
\end{itemize}
So, putting everything together, we have:
\begin{multline}
\chihdg{\overline{\mathcal{A}}}= \chihdg {\mathcal{A}} + \chihdg {\textrm{codim }1} + \chihdg{\textrm{codim } 2} + \dots\\= L^4-L^2 + (6 L^3- 6 L^2) + (16 L^2) + \dots 
\end{multline}
and now, using the fact that Poincar\'e duality holds, we get to the conclusion:
\begin{equation}
\chihdg{\overline{\mathcal{A}}}=  L^4+ 6L^3+ 9 L^2 + 6L +1.
\end{equation}
\end{proof}

\begin{remark}
In the proof of the above result we have used Poincar\'e duality. It is possible to reach the same conclusion by means of a more refined analysis of the boundary strata. This gives a non-trivial check on the whole result.

More precisely, the quantity $L^4+ 6L^3+ 9 L^2 + 6L +1$ is obtained as the sum of the following contributions:
$$
(L^4-L^2)+(6L^3-6L^2+3L-1)+(16L^2-15L+5)+(18L-13)+10,
$$
where we have collected the terms according to the codimension of the corresponding stratum (from codimension $0$ to codimension $4$).
\end{remark}

\begin{proposition} \label{pscb} The Hodge--Grothendieck character of $\overline{\mathcal{B}}$ is
$
L^2+3L +1
$.
\end{proposition}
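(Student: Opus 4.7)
The plan is to follow the template of the proof of Proposition~\ref{psca}. By Corollary~\ref{psb} the open part contributes $\chihdg{\mathcal{B}}=L^2-L+1$, so by additivity it suffices to show that the complement $\overline{\mathcal{B}}\setminus\mathcal{B}$ has Hodge--Grothendieck character equal to $4L$. Since $\overline{\mathcal{B}}$ is smooth and proper of dimension~$2$, Poincar\'e duality forces $\chihdg{\overline{\mathcal{B}}}$ to be a palindromic polynomial of the form $1+aL+L^2$; this provides an independent check on the computation.

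The analysis is organized through the forgetful morphism $t\colon \overline{\mathcal{B}}\to \overline{\mathcal{M}}_{1,2}$ sending an admissible cover to its base marked curve. Since the two branch divisors carry distinct monodromies $\zeta$ and $\zeta^2$, they are distinguishable: the group $S_A$ is trivial and no quotient on the target is needed. The boundary of $\overline{\mathcal{M}}_{1,2}$ decomposes into two divisors, $\Delta_{irr}$ (general element: an irreducible genus $0$ curve with a non-separating node) and $\Delta_0$ (general element: a smooth elliptic curve joined to a rational tail carrying both marks), together with codimension-$2$ strata coming from stable curves with two nodes. For each stratum one enumerates the irreducible components of the preimage under~$t$ by classifying the admissible-cover data at the nodes subject to the balancing condition $\zeta\cdot\zeta^2=1$, modulo the action of the automorphism group of the base. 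Over $\Delta_{irr}$ the cover is either \'etale at the non-separating node (with the three possible $\mu_3$-equivariant gluings of the preimages splitting into orbits under the involution swapping the two preimages $p_1\leftrightarrow p_2$ of the node) or totally ramified at the node (with the two monodromy labelings $(\zeta,\zeta^2)$ and $(\zeta^2,\zeta)$ identified by the same involution). Over $\Delta_0$ the balancing condition $\zeta\cdot\zeta^2=1$ on the rational tail forces the cover to be \'etale at the separating node, so that its restriction to the elliptic component is an unramified $\mu_3$-torsor, classified by an element of $H^1(E,\mu_3)$ modulo the automorphisms of $(E,p)$.

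For each irreducible component the Hodge--Grothendieck character is computed using the known cohomology of $\mathcal{M}_{0,n}$ with its symmetric-group action (see \cite{getzleroperads}) for the rational pieces, and $\chihdg{\mathcal{M}_{1,1}}=L$ for the elliptic piece, and then combined by additivity. The main obstacle is the careful bookkeeping of the automorphisms of the base and of the admissible-cover data that identify \emph{a priori} distinct irreducible components of the preimage of each stratum, as well as the analysis of the codimension-$2$ strata where distinct types of degeneration meet (notably $\Delta_{irr}\cap \Delta_0$). Once this enumeration is correctly carried out, the codimension-$1$ contributions sum to $4L$ and the codimension-$2$ contributions cancel, yielding $\chihdg{\overline{\mathcal{B}}\setminus \mathcal{B}}=4L$ and hence $\chihdg{\overline{\mathcal{B}}}=L^2+3L+1$, consistent with the palindromy imposed by Poincar\'e duality.
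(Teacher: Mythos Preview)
Your overall strategy coincides with the paper's: compute $\chihdg{\mathcal B}=L^2-L+1$, use the forgetful map $t\colon\overline{\mathcal B}\to\overline{\mathcal M}_{1,2}$, count the irreducible boundary components lying over $\delta_0$ and $\delta_1$, and finish with Poincar\'e duality. The paper is terser than you are: it simply asserts that the preimage of each of $\delta_0$ and $\delta_1$ has two irreducible components, so the $L$-coefficient becomes $-1+4=3$, and then invokes Poincar\'e duality for the constant term rather than analysing codimension-$2$ strata at all.

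Where your write-up goes off is in the detailed enumeration over $\Delta_{irr}$. You split the \'etale-at-node case into orbits under the involution $p_1\leftrightarrow p_2$; taken literally this yields two orbits on the three gluings (the ``identity'' gluing is fixed, the other two are swapped), hence two components from the \'etale case and one from the ramified case, for a total of three over $\delta_0$ rather than two. The missing ingredient is the rest of the monodromy of the open stratum of $\delta_0$: as the cross-ratio of $(x_1,x_2,p_1,p_2)$ varies and one loops around a deeper degeneration (equivalently, as the chosen cube root of the gluing parameter winds once), the three $\mu_3$-equivariant gluings are permuted \emph{cyclically}, so they lie in a single irreducible component. Thus the correct count over $\delta_0$ is $1$ (\'etale) $+$ $1$ (ramified) $=2$, matching the paper. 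Over the compact-type divisor your description is correct but stops short of the count; there the two components are the trivial torsor on $E$ (three copies of $E$ glued to the rational cover) and the connected family of nontrivial torsors.

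Finally, the phrase ``codimension-$2$ contributions cancel'' is misleading: those strata are finite and contribute nonnegative integers. The paper avoids this bookkeeping entirely by reading off only the $L^2$- and $L$-coefficients from the codimension-$0$ and codimension-$1$ data and letting Poincar\'e duality supply the constant term. I would recommend you do the same.
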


\begin{proof}[Sketch]
We have seen in Corollary~\ref{psb} that  $\chihdg{\mathcal{B}}$ equals $L^2-L+1$. The moduli stack $\mathcal{B}$ admits a finite \'etale map onto $\mathcal{M}_{1,2}$. On its compactification $\overline{\mathcal{B}}$ by means of admissible covers this map extends to a finite map $t: \overline{\mathcal{B}} \to \overline{\mathcal{M}}_{1,2}$. 
The boundary $\partial \mathcal{M}_{1,2}$ has two irreducible components: namely, a component $\delta_1$ whose general element is of compact type, and another component $\delta_0$  whose general element is an irreducible curve of geometric genus $0$ with one node. It is easy to see that the preimage of both $\delta_0$ and $\delta_1$ under $t$  has two irreducible components.
By the additivity of Hodge--Grothendieck characters for compact support this ensures that the coefficients of $L$ and $L^2$ in the Hodge--Grothendieck character of $\overline{\mathcal B}$ are as in the claim. The constant coefficient is equal to the degree $2$ coefficient by Poincar\'e duality.
\end{proof}

\begin{proposition} \label{pscc'} The Hodge--Grothendieck character of $\overline{\mathcal{C}}'$ is
$
L^2+3L +1
$.
\end{proposition}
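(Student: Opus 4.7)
The plan is to mimic the strategy of Propositions~\ref{psca} and~\ref{pscb}: we use the finite forgetful map $\mathcal{C}' \to [\mathcal{M}_{1,2}/\s_2]$ given by $(C, D_2, L) \mapsto (C, D_2)$, which extends by admissible covers to a finite map $t \colon \overline{\mathcal{C}'} \to [\overline{\mathcal{M}}_{1,2}/\s_2]$. The boundary of the target has two divisors: $\delta_1$, whose general element is a smooth elliptic curve joined at one node to a rational tail carrying both marked points, and $\delta_0$, whose general element is an irreducible rational nodal curve of arithmetic genus one. Since $\overline{\mathcal{C}'}$ is a smooth proper stack of dimension~$2$, Poincar\'e duality forces $\chihdg{\overline{\mathcal{C}'}}$ to be palindromic. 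Combined with $\chihdg{\mathcal{C}'} = L^2 - L + 1$ from Proposition~\ref{psc'}, the problem reduces to counting the number of irreducible codimension-$1$ components of $\overline{\mathcal{C}'} \setminus \mathcal{C}'$; this we do by classifying the admissible $\mu_4$-covers with branching datum $d_2 = 2$ above each boundary divisor.

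Over $\delta_1$, the structural equation~\eqref{abcovers} applied to the elliptic component forces trivial monodromy at the node. The restriction of the cover to the rational tail is then the standard disconnected $\mu_4$-cover of $\Pp1$ branched at the two type-$2$ marked points (two copies of $\Pp1$ each mapping $2:1$ to the base), while the restriction to the elliptic component is an \'etale $\mu_4$-torsor, classified by an element $\eta \in E[4]$. Of the three possible orders for $\eta$, only order~$4$ yields a connected total cover; the other two possibilities give disconnected admissible covers that cannot be smoothed into $\mathcal{C}'$. The corresponding stratum is thus an irreducible finite cover of the modular curve $Y_1(4)$, contributing a single component.

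Over $\delta_0$, the local monodromies $\pm a$ at the self-node, modulo the ambiguity $a \leftrightarrow -a$ in labelling the two branches, fall into the three cases $a \in \{0,1,2\}$. For each, the cover of the normalised $\Pp1$ is respectively $\Pp1 \sqcup \Pp1$, an irreducible smooth curve of genus~$2$, and $E \sqcup E$; a direct check shows that in each case exactly one admissible gluing at the preimages of the node yields a connected cover of arithmetic genus three, the remaining gluings producing disconnected covers excluded from $\overline{\mathcal{C}'}$. This gives three irreducible components over $\delta_0$, so the codimension-$1$ boundary of $\overline{\mathcal{C}'}$ has four components in total, each contributing $L$ to the top degree of $\chihdg{\overline{\mathcal{C}'}}$. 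Additivity and Poincar\'e duality then give $\chihdg{\overline{\mathcal{C}'}} = L^2 + 3L + 1$. The most delicate step is the case $a = 2$, where one must distinguish the two ways of gluing two elliptic curves along two pairs of ramification points and verify that only the \virg{cross} gluing produces a connected cover.
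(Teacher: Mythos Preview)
Your proposal is correct and follows essentially the same approach as the paper's own sketch: both use the finite map $t\colon\overline{\mathcal{C}'}\to[\overline{\mathcal{M}}_{1,2}/\s_2]$, count one irreducible component over $\delta_1$ and three over $\delta_0$ (the paper phrases the latter as ``one for each possible way of prescribing a balanced ramification over the node,'' which is exactly your case split $a\in\{0,1,2\}$), and then conclude via additivity and Poincar\'e duality. Your write-up simply supplies the justification for these counts that the paper leaves implicit.
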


\begin{proof}[Sketch]
We have seen in Proposition \ref{psc'} that  $\chihdg{\mathcal{C}'}$ equals $L^2-L+1$. The moduli stack $\mathcal{C}'$ admits a finite \'etale map onto $[\mathcal{M}_{1,2}/\s_2]$. 
This map extends to a finite map $t: \overline{\mathcal{C}'} \to [\overline{\mathcal{M}}_{1,2}/\s_2]$ on the compactification $\overline{\mathcal{C}'}$ by means of admissible covers. 
There are two irreducible components of codimension $1$ in $[\partial \mathcal{M}_{1,2}/\s_2]$, one whose general element is of compact type, and the other one whose general element is an irreducible curve of geometric genus $0$ with one node. We call them respectively $\delta_1$ and $\delta_0$. Now the fiber of $t$ over $\delta_1$ is made of one irreducible component, while the fiber of $t$ over $\delta_0$ is made of three irreducible components, one for each possible way of prescribing a balanced ramification over the node.
\end{proof}

\begin{proposition}\label{pscd'} The stack $\overline{\mathcal{D}'}$ has Hodge--Grothendieck character for compact support 
$
L^3+4L^2 +4L +1.
$
\end{proposition}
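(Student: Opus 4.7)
The plan is to mimic the strategy of the proofs of Propositions~\ref{psca}, \ref{pscb}, and~\ref{pscc'}. Since $\overline{\mathcal D'}$ is a smooth proper Deligne--Mumford stack of dimension~$3$, Poincar\'e duality forces $\chihdg{\overline{\mathcal D'}}$ to be palindromic of degree~$3$ in $L$. Combined with $\chihdg{\mathcal D'}=L^3-L^2$ from Corollary~\ref{psd'}, it suffices to identify the coefficient of $L^2$, which by additivity of Hodge--Grothendieck characters equals $-1$ plus the number of irreducible codimension-one components of $\overline{\mathcal D'}\setminus\mathcal D'$.

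To count these divisors, I use the finite \'etale map $\mathcal D'\to\mathcal M_2$ that forgets the 2-torsion line bundle, and extend it via admissible $\mu_2$-covers to a finite map $t\colon\overline{\mathcal D'}\to\overline{\mathcal M}_2$. The boundary $\partial\mathcal M_2$ has two irreducible components, $\delta_0$ (irreducible one-nodal) and $\delta_1$ (two elliptic curves meeting at one point), whose preimages I analyze separately.

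Over $\delta_1$ the node is separating, and Riemann--Hurwitz rules out ramification at it (no connected double cover of an elliptic curve is ramified at a single point). So the admissible cover is the gluing of two \'etale double covers, one on each elliptic component, each of which is either trivial or a nontrivial $2$-isogeny. The case when both are trivial yields only disconnected covers which live in $\overline{\mathcal D''}$, so after discarding it two irreducible components remain: "exactly one of the two covers is nontrivial" and "both nontrivial". Over $\delta_0$ the node is non-separating and three monodromy-orbits of admissible covers arise. In the \emph{separable trivial} case, the cover restricts trivially to the normalization $E$ and the unique $\mu_2$-equivariant gluing producing a connected source is the crossed one, giving a single divisor. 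In the \emph{separable nontrivial} case, the restriction to $E$ is a nontrivial $2$-isogeny $\tilde E\to E$; a priori two $\mu_2$-equivariant gluings of the four preimages of $\{p,q\}$ are possible, but a monodromy loop in $\mathcal M_{1,2}$ that swaps the two preimages of $p$ while fixing those of $q$ identifies them, so only one irreducible component arises. In the \emph{ramified} case, the cover of $E$ is a double cover ramified exactly at $\{p,q\}$, of genus $2$ by Riemann--Hurwitz, and gluing its two ramification points produces the source; the resulting moduli parametrizes data $(E,\{p,q\},L)$ with $L^{\otimes 2}\cong\mathcal O_E(p+q)$, which a monodromy argument on the $E[2]$-torsor of square roots shows to be irreducible.

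Altogether $\overline{\mathcal D'}\setminus\mathcal D'$ has $2+3=5$ irreducible codimension-one components, so the coefficient of $L^2$ in $\chihdg{\overline{\mathcal D'}}$ is $-1+5=4$, and Poincar\'e duality then yields $\chihdg{\overline{\mathcal D'}}=L^3+4L^2+4L+1$. The main difficulty lies in the monodromy bookkeeping for $\delta_0$: one has to verify that the local choices of gluings in the separable nontrivial case and the local choices of square roots in the ramified case are fused by monodromy into a single irreducible component each, so that the boundary contributes exactly $5$ rather than strictly more.
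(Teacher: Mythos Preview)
Your argument is correct and follows exactly the strategy the paper explicitly mentions as a valid alternative (``One can compute the Hodge--Grothendieck character of $\overline{\mathcal D'}$ with the same strategy used for the Propositions~\ref{psca}--\ref{pscc'}'') but does not carry out. Your divisor count of $2$ over $\delta_1$ and $3$ over $\delta_0$ is right, and the monodromy reductions you worry about do hold.

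The paper's own proof takes a different and shorter route. It invokes Bernstein's result that the coarse moduli space of admissible \'etale double covers of genus~$2$ curves is the moduli space $\overline R_2$ of Prym curves, and then Krug's identification of $\overline R_2$ with $\overline{\mathcal M}_{0,6}/(\s_4\times\s_2)$, which globalizes Lemma~\ref{lemmad'} to the boundary. The Hodge--Grothendieck character then drops out of Getzler's $\s_6$-equivariant formulas for $\overline{\mathcal M}_{0,6}$. This approach avoids all monodromy bookkeeping and yields the full cohomology directly rather than via Poincar\'e duality; on the other hand, it relies on two external references, whereas your approach is self-contained within the machinery already set up in the paper. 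Incidentally, the identification with $\overline{\mathcal M}_{0,6}/(\s_4\times\s_2)$ gives an independent check of your divisor count: the $\s_4\times\s_2$-orbits of $2$--$4$ splits (mapping to $\delta_0$) and $3$--$3$ splits (mapping to $\delta_1$) of the six marked points are exactly $3$ and $2$, respectively.
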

 
One can compute the Hodge--Grothendieck character of $\overline{\mathcal{D}'}$ with the same strategy used for the Propositions~\ref{psca}--\ref{pscc'}, but also a more geometric proof is possible:
\begin{remark} As shown by Bernstein in \cite[Sections 2.5--2.6]{bernstein}, 
the coarse moduli space of admissible \'etale double covers in genus $g$ coincides with the coarse \emph{moduli space of Prym curves}, i.e. of quasistable curves together with a line bundle which is isomorphic to $\mathcal O(1)$ on the exceptional components and to a square root of the trivial bundle on the nonexceptional ones. 
Indeed the latter coarse moduli space, usually denoted by $\overline{R}_g$, has been extensively studied. The results of Bernstein imply that $\overline{\mathcal{D}'}$ has coarse moduli space $\overline{R}_2$, and by \cite[Lemma 20]{sebastian} the latter coincides with $\overline{\mathcal{M}}_{0,6}/\s_4 \times \s_2$.
The combination of these results implies Proposition \ref{pscd'}.
\end{remark}

\section {The Age Grading}
\subsection {Definition of Chen--Ruan degree}
To complete our computation of the orbifold cohomology of $\M3$ we still need to consider its structure as a $\Q$-graded vector space. To compute the new grading, the degree of each cohomology class of the inertia stack of $\M3$ has to be shifted by the \emph{age} of the twisted sector.
We recall from the introduction that the motivation for the new grading is that orbifold cohomology can be endowed with a natural product, the Chen--Ruan product, which is not compatible with the ``naive'' grading of the cohomology of the inertia stack, but turns it into a $\Q$-graded algebra, once it is endowed with the new grading.
 
In the following, we denote by $R{\mu_N}$  the representation ring of $\mu_N$, and by $\zeta_N$ a chosen generator for the group $\mu_N$ of $N$-roots of $1$. 
\begin {definition} (\cite[Section 7.1]{agv2}) Let $\rho:\mu_N \to \mathbb{C}^*$ be a group homomorphism. It is determined by an integer $0 \leq k \leq N-1$ as $\rho( \zeta_N)= \zeta_N^k$. We define the \emph{age} of $\rho$ by:
$$
\textrm{age}(\rho)=k/N.
$$
The age extends to a unique additive homomorphism
$
\textrm{age}: R \mu_N \to \mathbb{Q}
$.
\end {definition}
\noindent Next, we define the age of a twisted sector $Y$. In the following definition, we let $f$ be the restriction to the twisted sector $Y$ of the natural map $I(X) \to X$.
\begin{definition} \label{definitionage} (\cite[Section 3.2]{chenruan}, \cite[Definition 7.1.1]{agv2}) Let $Y$ be a twisted sector and $g: \spec \mathbb{C} \to Y$ a point. Then the pull-back via $f\circ g$ of the tangent sheaf, $(f\circ g)^*(T_X)$, is a representation of $\mu_N$ on a finite-dimensional vector space.  We define: $$a(Y):= \textrm{age}((f\circ g)^*(T_X))$$
\end{definition}
\noindent We are ready to define the orbifold, or Chen--Ruan, degree.

\begin{definition} \label{defcoomorb2} (\cite[Definition 3.2.2]{chenruan}) We define the $d$th degree orbifold cohomology group of $X$ as follows:
$$
H^d_{CR}(X):= \bigoplus_Y  \coh{d-2 a(Y)}Y
$$
where the sum runs over all components sectors $Y$ of $I(X)$.
\end{definition}

\begin{definition} \label{pp1} We define the \emph{orbifold Poincar\'e polynomial} of $X$ as:
$$
P^{CR}(X):= \sum_{i \in [0, \dim_{\mathbb{C}}(X)] \cap \mathbb{Q}} \dim H^{i}_{CR}(X) t^i
$$
Note that the degree of $H^\pu_{CR}$ is given by the unconventional grading defined in Definition~\ref{defcoomorb2}.
\end{definition}

In Definition \ref{ibar} we have introduced a compactification of the inertia stack. The connected components of $\overline{I}(X)$ can be assigned the age grading as in this section, simply using the fact that every connected component of $\overline{I}(X)$ contains, by its very definition, a connected component of ${I}(X)$. Taking cohomology,  one obtains a linear subspace of the orbifold cohomology ${H}^*_{CR}(\overline{\mathcal{M}}_g)$. 
Although we shall not deal with this in the present paper, it is actually possible to prove that the orbifold cohomology classes coming from $\overline{I}(X)$ form a subalgebra of the Chen--Ruan cohomology ring.

\begin{definition} \label{pp2} We define the compactified orbifold cohomology $\overline{H}^\pu_{CR}(X)$ as $H^\pu(\overline{I}(X))$ with the grading induced from $H^\pu_{CR}(X)$. The additive structure of this vector space can be recollected in the polynomial:
$$
\overline{P}^{CR}(X):= \sum_{i \in [0, \dim_{\mathbb{C}}(X)] \cap \mathbb{Q}} \dim \overline{H}^{i}_{CR}(X) t^i
$$
We call this polynomial the \emph{compactified orbifold Poincar\'e polynomial}.
\end{definition}

We observe that Poincar\'e duality holds for the orbifold cohomology of $X$ if $X$ is a proper smooth stack, and for the compactified orbifold cohomology of $X$ with respect to the compactification $X\subset\overline X$, if $\overline X$ is smooth.
\subsection {The orbifold Poincar\'e polynomials of $\mathcal{M}_3$}
\label{crpp3}

To compute the age of the twisted sectors, we will use the following Proposition, suggested to us by Fantechi \cite{fantechi}, which builds on \cite[Proposition 4.1]{pardini}.

\begin{proposition} \label{fantechiage} (\cite{fantechi}) Let $g>1$ and let $Y$ be the twisted sector of $\mathcal{M}_{g}$ corresponding to the discrete datum $(g',N,d_1,\ldots,d_{N-1})$ (Definition \ref{admissible}, Proposition \ref{corrispondenza}). Then its age is equal to:
\begin{equation}\label{etamg}
a(Y) = \frac{(3 g'-3)(N-1)}{2} + \frac{1}{N} \sum_{i=1}^{N-1} d_i \sum_{k=1}^{N-1} k \left( \left\{\frac{k i}{N}\right\} + \sigma(k,i) \right)
\end{equation}
where $\sigma(k,i)=0$ if $ki+\gcd(i,N)\equiv 0 \pmod N$ and $1$ otherwise.

\end {proposition}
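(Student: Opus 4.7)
The plan is to identify the age at a point $(C,\alpha)$ with the sum $\sum k/N$ taken over the eigenvalues $\zeta_N^k$ of $\alpha$ acting on the Kodaira--Spencer space $T_{\mathcal{M}_g,[C]} = H^1(C, T_C)$, and then to use Pardini's theory of cyclic covers to decompose $H^1(C, T_C)$ into its $\mu_N$-isotypic pieces and read off the multiplicities in closed form. This is the strategy of Fantechi, of which our computation is the systematic elaboration.

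As a first step, since $\pi: C \to C' = C/\mu_N$ is finite we obtain a $\mu_N$-equivariant identification $H^1(C, T_C) = H^1(C', \pi_*T_C)$. Writing $T_C = \pi^* T_{C'} \otimes \mathcal{O}_C(-R)$, where $R$ is the ramification divisor, the projection formula yields $\pi_* T_C = T_{C'} \otimes \pi_* \mathcal{O}_C(-R)$. Now the $\mu_N$-isotypic decomposition $\pi_*\mathcal{O}_C = \bigoplus_{k=0}^{N-1} L^{-k} \otimes \mathcal{O}_{C'}(-\sum_i \lfloor ki/N\rfloor D_i)$ recalled in the proof of Corollary~\ref{corrispondenza} extends, via a local computation at each point above a $D_i$, to an analogous decomposition of $\pi_*\mathcal{O}_C(-R)$. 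Tensoring with $T_{C'}$ produces, for every $k$, an explicit line bundle $\mathcal{F}_k = T_{C'} \otimes L^k \otimes \mathcal{O}_{C'}(E_k)$ whose divisor $E_k$ is supported on the branch locus, with coefficients expressible in terms of $\lfloor ki/N\rfloor$ or $\lceil ki/N\rceil$ according to the local monodromy.

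The next step is to evaluate $\dim H^1(C', \mathcal{F}_k)$ via Riemann--Roch on $C'$. For $k \ne 0$, the sheaf $\mathcal{F}_k$ will have sufficiently negative degree that $h^0$ vanishes generically, giving $h^1 = 3g'-3 - \deg(\mathcal{F}_k \otimes T_{C'}^{-1})$; the exceptional cases in which the $k$-th character acts trivially on the relevant local ramification data are precisely those satisfying $ki + \gcd(i,N) \equiv 0 \pmod N$, and they produce the correction encoded by $\sigma(k,i)$. Summing $\frac{k}{N}\dim H^1(C', \mathcal{F}_k)$ over $k = 1, \ldots, N-1$ splits the total into two contributions: the $(3g'-3)$ part gives $(3g'-3)\sum_{k=1}^{N-1} k/N = (3g'-3)(N-1)/2$, which is the first summand of the formula, while the degree terms, after unravelling $\{ki/N\} = ki/N - \lfloor ki/N\rfloor$ and invoking the relation $N\deg L = \sum_i i\, d_i$, collapse onto the double sum appearing in the second summand of \eqref{etamg}.

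The principal difficulty I expect is the careful local analysis at ramification points, which simultaneously pins down the precise form of $E_k$ and isolates the delicate correction term $\sigma(k,i)$ distinguishing the degenerate isotypic components. Once that analysis is in place, the remaining step is a combinatorial identity verified using Riemann--Hurwitz~\eqref{riemann-hurwitz} and the structural congruence~\eqref{abcovers}.
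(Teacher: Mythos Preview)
Your approach is essentially the same as the paper's: identify the age with $\sum_{k=1}^{N-1}\frac{k}{N}\,h^1(C,T_C)^{\chi_k}$, push $T_C$ down to $C'$, decompose into $\mu_N$-eigensheaves following Pardini, and apply Riemann--Roch on $C'$. Two small points of comparison. First, your ``$h^0$ vanishes generically'' is weaker than necessary: since $g>1$ the curve $C$ is stable, so $H^0(C,T_C)=0$ identically and hence every isotypic piece $H^0(C,T_C)^{\chi_k}$ vanishes; this lets you replace $h^1$ by $-\chi$ at \emph{every} point of the sector, not just a generic one, and avoids any appeal to constancy of the age. Second, rather than re-deriving the eigensheaf decomposition of $\pi_*T_C$ via the projection formula and a local computation, the paper simply invokes \cite[Proposition~4.1(a)]{pardini}, which already records $\deg\bigl((\pi_*T_C)^{\chi_k}\bigr)=\deg T_{C'}-\sum_{\sigma(k,i)=1}d_i-\deg L_k$; the $\sigma(k,i)$ term is built into that degree formula, so no further combinatorial identity (in particular no appeal to Riemann--Hurwitz or the structural congruence) is needed to reach~\eqref{etamg}.
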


\begin{proof}
A point of $Y=\mathcal{M}_{(N,g',d_1, \ldots, d_{N-1})}'$ is a $\mu_N$-cover $C \to C'$. By standard infinitesimal deformation theory, the fiber of the tangent sheaf $T \mathcal{M}_g$ at $C$ is $H^1(C, T_C)$. Thus the cyclic group $\mu_N$ acts on the vector space $H^1(C, T_C)$, and the latter splits in a direct sum of eigenspaces as
$$
H^1(C, T_C)= \bigoplus H^1(C,T_C)^{\chi_k},
$$
where  $H^1(C,T_C)^{\chi_k}$ denotes the subspace where $\mu_N$ acts with weight $k$.
According to Definition \ref{definitionage}, we have:
\begin{equation} \label{formula1}
a(Y)= \sum_{k=1}^{N-1} \frac{k}{N} h^1(C, T_C)^{\chi_k}.
\end{equation}
Now by stability we can substitute $h^1(C,T_C)$ with $-\chi(C, T_C)$. Moreover, since $\pi$ is finite and $T_C$ is coherent, we have that all higher invariant direct images $R^i \pi_*^{\mu_N}(T_C)$ vanish (see \cite[Chapter V, Corollary p. 202]{grot}), and thus: 
$$H^i(C, T_C)^{\chi_k}=H^i(C', (\pi_* T_C) )^{\chi_k}= H^i(C', (\pi_* T_C)^{\chi_k} ),  \quad i=0,1.$$ 
So \eqref{formula1} becomes
\begin{equation} \label{formula2}
a(Y)= \sum_{k=1}^{N-1} -\frac{k}{N} \chi((\pi_*T_C)^{\chi_k}).
\end{equation}
The sheaf $\pi_*(T_C)$ is studied in \cite[Proposition 4.1,(a)]{pardini}. In particular, one has
\begin{equation} \label{formula3}
\deg\left(\pi_* (T_C)^{\chi_k}\right)= \deg T_{C'} - \sum_{\sigma(k,i)=1} d_i - \deg L_k
\end{equation}
where the degree of the line bundles $L_k$ can be computed via \cite[Proposition 2.1]{pardini} (see also \eqref{ellea}):
$$
\deg L_k = \sum_i \left\{ \frac{k i}{N}\right\} d_i.
$$

Now, if we combine~\eqref{formula3} with Riemann-Roch to compute the Euler characteristic in~\eqref{formula2}, we obtain 
\begin{equation}\label{rilevante}
a(Y)=\frac{1}{N} \sum_{k=1}^{N-1} k \left( 3g' -3 + \sum_{\sigma(k,i)=1} d_i + \sum_{i=1}^{N-1} \left\{ \frac{k i}{N}\right\} d_i \right)
\end{equation}
and from this last equation the statement follows.
\end{proof}
Note that by combining Proposition \ref{fantechiage} with \cite[Lemma 4.6]{pagani2} (respectively, with \cite[Corollary 4.10]{pagani2}), one obtains a closed formula for the age of the twisted sectors of $\mathcal{M}_{g,n}$ (respectively, $\mathcal{M}_{g,n}^{rt}$, the moduli space of stable curves that have one smooth component of geometric genus $g$).

The results we have seen so far enable us to compute the orbifold Poincar\'e polynomial of $\mathcal{M}_3$ and its compactified orbifold Poincar\'e polynomial.

\begin{theorem} \label{pp1m3} The orbifold Poincar\'e polynomial (Definition \ref{pp1}) of $\mathcal{M}_3$ is:
\begin{align*}
1+t+2t^2+t^3+t^{\frac{10}{3}}+t^{\frac{7}{2}}+4t^4+2 t^{\frac{9}{2}}+ 2 t^{\frac{14}{3}}+ t^{\frac{33}{7}}+ 5 t^5+ t^{\frac{46}{9}}+ t^{\frac{36}{7}}
\\+ 3t^{\frac{16}{3}}+ t^{\frac{38}{7}}+ 4 t^{\frac{11}{2}}
+ t^{\frac{50}{9}}
+ t^{\frac{39}{7}}
+t^{\frac{17}{3}}+ t^{\frac{40}{7}}+ t^{\frac{52}{9}}
+ t^{\frac{41}{7}}+10 t^6 +t^{\frac{43}{7}}\\
+ t^{\frac{56}{9}}+ t^{\frac{44}{7}}+ t^{\frac{19}{3}}+ t^{\frac{45}{7}}+ t^{\frac{58}{9}}+3 t^{\frac{13}{2}}+ t^{\frac{46}{7}}+ 2 t^{\frac{20}{3}}+ t^{\frac{48}{7}}+ t^{\frac{62}{9}}+ t^{\frac{51}{7}}.
\end{align*}
\end{theorem}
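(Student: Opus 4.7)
The plan is to assemble $P^{CR}(\mathcal{M}_3)$ from its contributions one connected component at a time, using the defining formula
\begin{equation*}
P^{CR}(\mathcal{M}_3)(t) \;=\; \sum_{Y} P_Y(t)\, t^{2a(Y)},
\end{equation*}
where $Y$ runs over the connected components of $I(\mathcal{M}_3)$, $P_Y(t)$ is the ordinary rational Poincar\'e polynomial of $Y$, and $a(Y)$ is its age; this equality is immediate from Definitions~\ref{defcoomorb2} and~\ref{pp1}.

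First, I would list the components of $I(\mathcal{M}_3)$. By Corollary~\ref{corrispondenza} together with the connectedness statement in Theorem~\ref{connessione}, these are exactly: the untwisted sector, which is $\mathcal{M}_3$ itself; the $43$ genus-$0$ twisted sectors $\mathcal{M}_A'$ enumerated in Proposition~\ref{basegenus0} and recorded in Tables~\ref{base0dim>0} and~\ref{base0dim0}; and the four higher-genus sectors $\mathcal{A}$, $\mathcal{B}$, $\mathcal{C}'$, $\mathcal{D}'$ studied in Section~\ref{inertiam3}, corresponding respectively to the admissible data $(1,2;4)$, $(1,3;1,1)$, $(1,4;0,2,0)$ and $(2,2;0)$. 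So there are $48$ summands.

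Next, I would extract each Poincar\'e polynomial $P_Y(t)$. For the untwisted sector I would invoke Looijenga's computation of $H^\pu(\mathcal{M}_3)$ cited in the introduction. For each genus-$0$ sector, $\mathcal{M}_A'$ is an $S_A$-quotient of $\mathcal{M}_{0,d}$ and has rational cohomology pure in each degree, so the Hodge--Grothendieck characters tabulated in the second column of Tables~\ref{base0dim>0} and~\ref{base0dim0} determine $H_c^\pu(\mathcal{M}_A')$ as a graded vector space; Poincar\'e duality for smooth stacks then converts $H_c^\pu$ into $H^\pu$. The same purity plus duality step applies to $\mathcal{A}$, $\mathcal{B}$, $\mathcal{C}'$, $\mathcal{D}'$, whose compactly supported cohomologies were obtained in Proposition~\ref{psa}, Corollary~\ref{psb}, Proposition~\ref{psc'} and Corollary~\ref{psd'} respectively.

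Finally, I would compute each age. For the $43$ genus-$0$ sectors the ages are already tabulated; for the four higher-genus sectors I would substitute the four admissible data listed above directly into formula~\eqref{etamg} of Proposition~\ref{fantechiage}. Multiplying each $P_Y(t)$ by $t^{2a(Y)}$ and collecting like powers then produces the claimed polynomial. The principal obstacle is organizational rather than conceptual: one must keep track of $48$ summands carrying rational exponents, and must be careful to pass from compactly supported to ordinary cohomology \emph{before} applying the age shift. A convenient sanity check is that evaluating the sum at $t=1$ must return $62$, the total orbifold dimension recorded in the unlabelled corollary immediately preceding the statement of the theorem.
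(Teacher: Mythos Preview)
Your proposal is correct and follows essentially the same route as the paper's own proof: Looijenga's computation for the untwisted sector, the tabulated cohomology and ages for the genus-$0$ sectors, the separately computed cohomology of $\mathcal A$, $\mathcal B$, $\mathcal C'$, $\mathcal D'$, and Proposition~\ref{fantechiage} for the remaining ages. Your remarks on passing from $H_c^\pu$ to $H^\pu$ via purity and Poincar\'e duality, and the $t=1$ check against the total dimension $62$, are appropriate additions but do not change the overall strategy.
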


\begin{proof} The ordinary Poincar\'e polynomial of $\mathcal{M}_3$ is calculated in \cite[(4.7)]{loo}. Then the result follows from the determination of the Poincar\'e polynomials of the moduli stacks $\mathcal{M}_A'$  carried out in Section \ref{inertiam3}  for any $3$-admissible datum $A$. See in particular Table~\ref{base0dim>0} and~\ref{base0dim0}, Proposition~\ref{psa} and \ref{psc'} and Corollary~\ref{psb} and \ref{psd'}.

Finally, the age of the twisted sectors is computed using Proposition \ref{etamg}.
\end{proof}

\begin{remark}
In our work we have computed the Hodge structures on the twisted sectors of $\M3$, which turned out to be always pure and of Tate type. Therefore, if following \cite[Def. 3.2.4]{chenruan} one defines  $$H^{p,q}(H_{CR}^d(\M3))=
\bigoplus_Y  H^{p-a(Y),q-a(Y)}(\coh{d-2a(Y)}Y),
$$
where the sum runs over all sectors $Y$ of $\M3$, one can endow $H_{CR}^d(\M3)$ with a canonical structure of direct sum of pure Hodge structures of Tate type whose Hodge filtrations have been shifted by a rational number. Under this convention, what we actually prove is the following result:
\begin{align*}
&{\sum_{p,d\in\Q}\dim(H^{p,p}(H_{CR}^d(\M3)))L^{2p}t^d=} \\
 1&+ L^{1/2}t + 2 Lt^2 + L^{3/2}t^3 + (3L^2+L^{5/2})t^4\\
& + (2L^{5/2}+3L^3)t^5+(5L^3+2L^{7/2}+2L^4+L^6)t^6
\\
&+ \Lt{7/4}{7/2}(1+Lt) + \Lt{9/4}{9/2}(1+Lt) 
+ 3\Lt{11/4}{11/2} + 3\Lt{13/4}{13/2} 
\\
&+ \Lt{5/3}{10/3} + \Lt{7/3}{14/3}(2+Lt)+ \Lt{8/3}{16/3}(3+Lt)+2\Lt{10/3}{20/3}
\\
&+ \Lt{33/14}{33/7} + \Lt{18/7}{36/7}  + \Lt{19/7}{38/7}+ \Lt{39/14}{39/7} + \Lt{20/7}{40/7}    
\\
&+ \Lt{41/14}{41/7}+ \Lt{43/14}{43/7}  + \Lt{22/7}{44/7} + \Lt{45/14}{45/7}  + \Lt{23/7}{46/7} 
\\  
& + \Lt{24/7}{48/7}+ \Lt{51/14}{51/7} + \Lt{23/9}{46/9}  + \Lt{25/9}{50/9} 
\\
& + \Lt{26/9}{52/9}  + \Lt{28/9}{56/9}  + \Lt{29/9}{58/9}  + \Lt{31/9}{62/9}.
\end{align*}
\end{remark}

\begin{theorem} \label{pp2m3} The compactified orbifold Poincar\'e polynomial (see Definition~\ref{pp2}) of ${\mathcal{M}}_3$ is:
\begin{align*}
1& + t + 4 t^2 + 4 t^3 + t^{\frac{10}{3}} + t^{\frac{7}{2}} + 16 t^4 + 
      t^{\frac{9}{2}} + 2 t^{\frac{14}{3}} + t^{\frac{33}{7}} + 12 t^5 \\&+ 
      t^{\frac{46}{9}} + t^{\frac{36}{7}} + 5 t^{\frac{16}{3}} 
+ t^{\frac{38}{7}} + 
      5 t^{\frac{11}{2}} + t^{\frac{50}{9}} + t^{\frac{39}{7}}  +t^{\frac{40}{7}} + 
      t^{\frac{52}{9}} + t^{\frac{41}{7}}  \\&+ 31 t^6+ t^{\frac{43}{7}} + t^{\frac{56}{9}}+      t^{\frac{44}{7}} + t^{\frac{45}{7}} + t^{\frac{58}{9}} + 5 t^{\frac{13}{2}} 
+       t^{\frac{46}{7}} + 5t^{\frac{20}{3}} + t^{\frac{48}{7}} + t^{\frac{62}{9}} \\&+ 
      12 t^7 + t^{\frac{51}{7}} +2 t^{\frac{22}{3}} + t^{\frac{15}{2}} + 
      16 t^8 + t^{\frac{17}{2}} + t^{\frac{26}{3}} + 4t^9 + 4 t^{10} + 
      t^{11} + t^{12}.
\end{align*}
\end{theorem}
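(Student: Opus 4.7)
The plan is to mirror the strategy used for Theorem~\ref{pp1m3}, but with each open twisted sector $\mathcal M_A'$ replaced by its compactification $\overline{\mathcal M}_A'$ constructed via admissible covers. By Proposition~\ref{instack2}, the connected components of $\overline I(\mathcal M_3)$ are precisely the nonempty $\overline{\mathcal M}_A'$ as $A$ ranges over the $3$-admissible data with $g' \leq 3$. So the definition of the compactified orbifold Poincar\'e polynomial (Definition~\ref{pp2}) expresses $\overline P^{CR}(\mathcal M_3)$ as a sum of ordinary Poincar\'e polynomials of the $\overline{\mathcal M}_A'$, each shifted by $2 a(\mathcal M_A')$; the age of a compactified sector equals the age of its dense open part since both are read off from the same generic automorphism datum.

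First I would assemble the untwisted contribution: the ordinary Poincar\'e polynomial of $\overline{\mathcal M}_3$, taken from Getzler~\cite[Prop.~16]{getzlertopolo}, which accounts for the terms of integer degree coming from $I_1$. Next, for every $3$-admissible datum $A$ with $g'=0$ I would invoke Corollary~\ref{corollariocadman}, which identifies the rational cohomology of $\overline{\mathcal M}_A$ with the $S_A$-invariants of $H^\bullet(\overline{\mathcal M}_{0,\sum d_i})$; the resulting Hodge--Grothendieck characters are already tabulated in the third column of Tables~\ref{base0dim>0} and~\ref{base0dim0}. Since each $\overline{\mathcal M}_A$ is a smooth proper Deligne--Mumford stack, its cohomology is pure in each degree, so $\chihdg{\overline{\mathcal M}_A}$ determines the Poincar\'e polynomial by replacing $L$ with $t^2$. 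Finally, for the four exceptional sectors with $g' > 0$ I would use the Hodge--Grothendieck characters computed in Propositions~\ref{psca}--\ref{pscd'}, again converting to Poincar\'e polynomials using purity.

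With these ingredients collected, I would compute, for each $3$-admissible datum $A$, the age $a(\mathcal M_A')$ from Proposition~\ref{fantechiage}, and then form the sum
\[
\overline P^{CR}(\mathcal M_3)(t) \;=\; P_{\overline{\mathcal M}_3}(t) \;+\; \sum_{A \neq (3;\cdot)} t^{2a(\mathcal M_A')}\, P_{\overline{\mathcal M}_A'}(t),
\]
where $A$ runs over the $3$-admissible data of order $N > 1$ for which $\overline{\mathcal M}_A' \neq \varnothing$. Consistency of this sum with the claim can be checked one sector at a time, and the fractional exponents appearing in the statement can be cross-referenced with the ages listed in the tables of Section~\ref{inertiam3}.

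The main obstacle is purely bookkeeping: there are $43$ sectors with $g' = 0$ plus the four exceptional sectors, each contributing a shifted polynomial, and the ages are rational numbers with denominators up to $14$. The risk is not a conceptual one but rather the possibility of an arithmetic slip in summing so many shifted Poincar\'e polynomials; this can be mitigated by checking Poincar\'e duality $t \mapsto t^{\dim_{\mathbb C}\overline{\mathcal M}_A'} P_{\overline{\mathcal M}_A'}(1/t) = P_{\overline{\mathcal M}_A'}(t)$ on each compact sector individually (as illustrated in the remark following Proposition~\ref{psca}), and by verifying that the total polynomial $\overline P^{CR}(\mathcal M_3)$ itself satisfies the global Poincar\'e duality $t^{12}\overline P^{CR}(\mathcal M_3)(1/t) = \overline P^{CR}(\mathcal M_3)(t)$ expected from the smoothness and properness of $\overline{\mathcal M}_3$.
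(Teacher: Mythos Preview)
Your proposal is correct and follows essentially the same approach as the paper: assemble the Poincar\'e polynomial of $\overline{\mathcal M}_3$ from \cite{getzlertopolo}, add the contributions of the compactified twisted sectors using Corollary~\ref{corollariocadman} and Tables~\ref{base0dim>0}--\ref{base0dim0} for $g'=0$ and Propositions~\ref{psca}--\ref{pscd'} for $g'>0$, and shift each by twice the age from Proposition~\ref{fantechiage}. The Poincar\'e-duality consistency checks you suggest are a sensible addition but are not part of the paper's proof.
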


\begin{remark}
From the description of the cohomology of the compactified twisted sectors it also follows that the whole compactified orbifold cohomology of $\M3$ is additively generated by algebraic classes.
\end{remark}
\begin{proof}
The ordinary Poincar\'e polynomial of $\overline{\mathcal{M}}_3$ was first computed in \cite[Prop.~16]{getzlertopolo}.
The proof of this theorem follows again as a recollection of the results obtained in Section \ref{compm3}, in particular the Propositions~\ref{psca}, \ref{pscb}, \ref{pscc'}, \ref{pscd'} for the Poincar\'e polynomials of $\overline{\mathcal{A}}$, $\overline{\mathcal{B}}$, $\overline{\mathcal{C}}'$, $\overline{\mathcal{D}}'$ respectively. For the remaining twisted sectors, the Poincar\'e polynomial is computed applying Corollary~\ref{corollariocadman} and the results are summarized in Table~\ref{base0dim>0} and~\ref{base0dim0}. 

Finally, for the degree shifting numbers, one can compute directly the age of the twisted sectors using Proposition \ref{etamg}.
\end{proof}

\end{document}